\documentclass[reqno, a4paper]{amsart}

%% Language and font encodings
\usepackage[english]{babel}
\usepackage[utf8x]{inputenc}
\usepackage[T1]{fontenc}
\usepackage{yfonts}
\usepackage{frcursive}
\usepackage{aurical}
\usepackage{egothic}
\usepackage{miama}
\setlength\parindent{0pt}
\setlength\parskip{5pt}
\usepackage{mathtools}
\usepackage{indentfirst}

%% Sets page size and margins
\usepackage[a4paper,top=3cm,bottom=2cm,left=2.5cm,right=2.5cm,marginparwidth=2cm]{geometry}

%% Useful packages
\usepackage{amsmath, latexsym, graphicx, amsthm, amssymb, scrextend, setspace,wasysym, marvosym, tikz}
\usepackage[colorinlistoftodos]{todonotes}
\usepackage[colorlinks=true, allcolors=blue]{hyperref}
\usepackage{tikz-cd}

\newcommand{\F}{\mathcal{F}}
\newcommand{\LL}{\mathcal{L}}
\newcommand{\OO}{\mathcal{O}}
\newcommand{\PP}{\mathbb{P}}

\newcommand{\BN}{\mathcal{BN}}
\newcommand{\pic}{\textup{\textbf{Pic}}}
\DeclareMathOperator{\Pic}{Pic} 

\DeclareMathOperator{\spec}{Spec}
\DeclareMathOperator{\fitt}{Fitt}

\newcommand{\be}{\begin{equation}} 
\newcommand{\ee}{\end{equation}}   
\newcommand{\beq}{\begin{eqnarray*}} 
\newcommand{\eeq}{\end{eqnarray*}} 
\newcommand{\tr}{\textup}

\newcommand{\kdot}{{{\,\begin{picture}(1,1)(-1,-2)\circle*{2}\end{picture}\,}}}
\DeclareMathOperator{\va}{\mathcal{VA}}
\DeclareMathOperator{\bpf}{\mathcal{BF}}

\DeclareMathOperator{\sing}{Sing}

\makeatletter
\newcommand*\bigcdot{\mathpalette\bigcdot@{.5}}
\newcommand*\bigcdot@[2]{\mathbin{\vcenter{\hbox{\scalebox{#2}{$\m@th#1\bullet$}}}}}
\makeatother

\newenvironment{mythm}[1]
  {\innercustomthm}
  {\endinnercustomthm}

\newenvironment{mycor}[1]
  {\innercustomcor}
  {\endinnercustomcor}

\newtheorem{theorem}{Theorem}

\newtheorem{lemma}[theorem]{Lemma}
\newtheorem{definition}[theorem]{Definition}

\newtheorem{corollary}[theorem]{Corollary}
\newtheorem{proposition}[theorem]{Proposition}

\newtheorem{question}[theorem]{Question}
\numberwithin{theorem}{section}
\numberwithin{equation}{section}
\theoremstyle{definition}
\newtheorem{example}[theorem]{Example}
\newtheorem{remark}[theorem]{Remark}

\theoremstyle{remark}

%\numberwithin{claim}{theorem} %% <-- This is another alternative if you like little difference.

\usepackage{pdfpages}

\title{Moduli of Very Ample Line Bundles}
\author{Brian Nugent}
\date{\today}

\begin{document}

\begin{abstract}
    Let $X$ be a projective variety over a field. In this paper, we will construct a moduli space of very ample line bundles on $X$. In doing so, we develop a generalization of Fitting ideals to complexes of sheaves on $X$. We give other applications of these Fitting ideals such as constructing Brill-Noether spaces for higher dimensional varieties and giving a scheme structure to the locus where the projective dimension of a module jumps up. 
\end{abstract}

\maketitle

\setcounter{tocdepth}{1}

{\setlength{\parskip}{0pt}

\tableofcontents
}

\section{Introduction}

Let $X$ be a projective variety over a field. A very ample line bundle $\LL$ on $X$ is a line bundle where the associated rational map $\phi_\LL: X \rightarrow \mathbb{P}(H^0(X,\LL))$ is an embedding. In this way, all embeddings of $X$ into projective space are determined by a choice of very ample line bundle (and possibly a projection to a hyperplane). Very ample line bundles are of great importance in algebraic geometry as they allow one to use the benefits of having an embedding into projective space without having to worry about working in coordinates. In this paper, we construct a moduli space for very ample line bundles on $X$. 

To do this, we need to first define what families our moduli space should parameterize. In section \ref{examples} we show that the naive definition for the moduli functor does not produce a moduli space. In section \ref{liftable-sections}, we define what it means for a family of line bundles to have \textit{liftable sections} and in section \ref{deforming-embeddings}, we explain why this is a natural condition to put on families of very ample line bundles. Put imprecisely, a family of line bundles $L$ over $T$ has liftable sections if the associated rational maps $\phi_{L_t}$ deform in a family over $T$ as well. Our main result is the following:

\begin{mythm}{\ref{va-moduli}}
            Let $X$ be a projective variety. There exists a scheme $\va_X$ representing the functor
$$ T \rightarrow  \left\{ L \in \Pic_X(T) : \forall t \in T, L_t \tr{ is very ample and } L \tr{ has liftable sections over } T \right\} $$
Moreover, the natural map
$$ \va_X \rightarrow \pic_X $$
is a locally closed embedding on the connected components of $\va_X$.

\end{mythm}

\iffalse

\begin{mythm}{\ref{BN-ideals}}
    Let $f: X \rightarrow Y$ be a proper flat morphism of locally noetherian schemes and $\F$ a coherent sheaf on $X$, flat over $Y$. Then there exist natural determinantal subschemes $W^k(\F) \subseteq Y$ such that
    $$ \tr{Supp } W^k(\F) = \{y \in Y | h^0(X_y,\F_y) \geq k+1 \} $$

\end{mythm}

From the theory of determinantal schemes, we get that
$$ W^{k+1}(\F) \subseteq \tr{Sing}(W^k(\F)) $$

whenever $W^k(\F) \neq Y$.

As a consequence of Theorem \ref{BN-ideals}, we get a direct generalization of the classical Brill-Noether varieties.

\fi

The main technical tool in our proof is a generalization of Fitting ideals for chain complexes. Given a bounded above complex $\mathcal{E}^\kdot$ of locally free sheaves on $X$, for each $i,k \in \mathbb{Z}$ we define the Fitting ideal of $\mathcal{E}^\kdot$, $\fitt_k^i(\mathcal{E}^\kdot)$, to be the ideal locally generated by the $(\chi_i-k) \times (\chi_i-k)$ minors of $d^i$ where $\chi_i$ is the alternating sum of the ranks of the $\mathcal{E}^j$ for $j > i$. For a more precise definition, see Section \ref{higher_fitt}. The crucial fact about these Fitting ideals is the following,

\begin{mythm}{\ref{Fitting-ideal}}
    Let $\mathcal{E}^\kdot \rightarrow \mathcal{W}^\kdot$ be a quasi-isomorphism of bounded above complexes of locally free sheaves on $X$. Then $\fitt_k^i(\mathcal{E}^\kdot) = \fitt_k^i(\mathcal{W}^\kdot)$ for all $i,k \in \mathbb{Z}$.
\end{mythm}

This allows us to define Fitting ideals for any element of $D^-_{coh}(X)$ (the bounded above derived category with coherent cohomology) as the Fitting ideals of a locally free resolution. This generalization of Fitting ideals is useful in defining natural scheme structures on certain closed subsets of $X$. One notable example is the following:

\begin{mycor}{\ref{projective-dimension}}
        Let $R$ be a commutative noetherian ring and $M$ an $R$-module of rank $k$. The Fitting ideal $\fitt_{(-1)^{d-1}k}^{d-1}(M)$ defines a natural determinantal scheme structure on the set
    $$ \{ \mathfrak{p} \in \spec R | \tr{ pd}(M_\mathfrak{p}) \geq d \} $$
\end{mycor}

Our main application of these Fitting ideals is in constructing a generalization of classical Brill-Noether varieties which in turn we use to prove Theorem \ref{va-moduli}.

Let $C$ be a smooth projective curve of genus $g$ over an algebraically closed field. The Brill-Noether varieties are the locus $W_d^r \subseteq \pic_C^d$ of line bundles of degree $d$ whose space of global sections has dimension at least $r+1$ inside the Picard scheme of $C$. These spaces come with natural scheme structures derived as degeneracy loci of a certain map of vector bundles. See \cite{ACGH1} for more details on this beautiful subject.

For higher dimensional $X$, semicontinuity tells us that $\{\LL \in \pic_X : h^0(X,\LL) \geq k+1\}$ is closed. For our application to proving Theorem \ref{va-moduli}, it will be crucial that we give this closed subset the correct scheme structure. In recent years, there has been some success in generalizing Brill-Noether theory to higher dimensional varieties and to more general moduli spaces of coherent sheaves, see \cite{coskun2023brillnoether} for a survey of these results on surfaces and \cite{Costa10} for some results in any dimension. In addition to many other results, these papers give natural scheme structures to the Brill-Noether loci similarly to how they are done for curves, which requires one to assume that $H^i(X,\F) = 0$ for all $i \geq 2$ and all $\F$ in the given moduli space. We introduce a new construction, which generalizes the existing ones and does not require the assumption on cohomology vanishing. Our main result is the following:

\begin{mycor}{\ref{Brill-Picard}}
    Let $X$ be a projective variety and let $\pic_X$ be the Picard scheme of $X$ (see \ref{picard-scheme}). Then there exist closed subschemes $W^k_X \subseteq \pic_X$ such that
    $$ \tr{Supp } W^k_X = \{\LL \in \pic_X : h^0(X,\LL) \geq k+1\} $$
    and 
    $$ \dot{\bigcup}_{r \in \mathbb{Z}} W_X^r \setminus W_X^{r+1} \tr{ represents the functor } $$
$$ T \rightarrow  \left\{ L \in \Pic_X(T) : L \tr{ has liftable sections over } T \right\} $$
\end{mycor}

We prove a more general version that can be applied to any moduli space of coherent sheaves in Theorems \ref{BN-ideals} and \ref{functorial}.

Just like in the curve case, there is a map of vector bundles whose degeneracy loci give us the $W^k_X$. This can be used to prove relations between them, for example:
$$ W_X^{k+1} \subseteq \sing (W_X^k) $$

See \ref{sing-determinantal-subschemes}.

In Corollary \ref{classical}, we show that our construction agrees with the classical Brill-Noether varieties in the case $X$ is a smooth curve. We also show that our construction agrees with the Brill-Noether loci defined in \cite{Costa10} and \cite{coskun2023brillnoether} in the case where they are both defined.

In Section \ref{fitting ideals-chapter}, we review the theory of Fitting ideals. In Section \ref{higher fitting ideals-chapter}, we define Fitting ideals for objects of $D_{coh}^-(X)$ and give some applications of them. In Section \ref{Brill-Noether Ideals} we give a general construction for scheme-theoretic Brill-Noether loci that applies in particular to any moduli of coherent sheaves. In Section \ref{liftable-sections} we show that these Brill-Noether loci produce a stratification with the universal property that parameterizes families of sheaves with liftable sections. In Section \ref{Brill-Noether Loci-chapter}, we apply the construction from the previous section to the Picard scheme. In Section \ref{Moduli of VA-chapter}, we prove our main result, constructing a moduli space of very ample line bundles. In the brief Section \ref{moduli of BF-chapter} we describe the small changes to the argument required to construct a moduli space for basepoint free line bundles.

\subsection{Acknowledgments}
This paper is based on my PhD thesis. I would like to thank my advisor Sándor Kovács for all of his help throughout the writing of this. I would also like to thank Jarod Alper, Max Lieblich and Giovanni Inchiostro for helpful conversations and suggestions.

\section{Preliminaries} \label{preliminaries}

\subsection{Notation and Conventions}

Throughout this paper, all schemes are defined over a fixed algebraically closed field $\kappa$. Most of our results generalize to all fields but require adjustments to the assumptions so for the sake of simplicity, we leave this to the interested reader. A variety will mean an integral scheme of finite type over $\kappa$.

Given morphisms $X \rightarrow Y$ and $T \rightarrow Y$, we denote $X \underset{Y}{\times} T$ by $X_T$. If $\F$ is a sheaf on $X$ and $g: X_T \rightarrow X$ is the projection, then we denote $g^*\F$ by $\F_T$. In particular, if $y \in Y$ is a point then the fiber of $X \rightarrow Y$ at $y$ is denoted $X_y$ and $\F$ restricted to $X_y$ is denoted $\F_y$. We denote the projection from $X_T$ to $T$ as $\pi_T$.

We denote $\dim H^i(X,\F)$ by $h^i(X,\F)$.

Given a scheme $X$, we denote its Picard functor as $\Pic_X$ and its Picard scheme as $\pic_X$ (see section \ref{picard-scheme}). 

When working with complexes of sheaves we use the standard cochain complex notation of 
$$ \F_0 \rightarrow \F_1 \rightarrow \F_2 $$
but in section \ref{higher_fitt}, when we are working with complexes of modules over a ring, we use chain complex notation,
$$ M_2 \rightarrow M_1 \rightarrow M_0 $$

\subsection{The Picard Scheme} \label{picard-scheme}

The Picard group of a scheme $X$ is the group of all line bundles (or invertible sheaves) on $X$. Like many objects in algebraic geometry, it can be given the structure of a scheme itself (under mild conditions) where it parameterizes families of line bundles. In this section, we review the definition and basic properties of the Picard scheme. For more information and proofs of these properties, see \cite[\href{https://stacks.math.columbia.edu/tag/0B9R}{Tag 0B9R}]{stacks-project2} for the case of a curve or \cite{kleiman2005picard} for the general case.

\begin{definition}
    We define the Picard functor $\Pic_X: Sch/\kappa \rightarrow Set$ by
    $$ \Pic_X(T) = \Pic(X \times T)/\pi_T^* \Pic(T) $$
\end{definition}

In general, to obtain a representable functor, one should take the sheafification of this functor in the étale topology but in the case that $X$ has a $\kappa$-point (trivial in our case since $\kappa$ is algebraically closed) it turns out that $\Pic_X$ is already representable.

\begin{theorem}[{\cite[4.8]{kleiman2005picard}}]
Let $X$ be a projective variety. Then $\Pic_X$ is represented by a locally noetherian scheme $\pic_X$, whose connected components are quasi-projective.
\end{theorem}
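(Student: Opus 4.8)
The plan is to follow Grothendieck's construction of the Picard scheme through the Hilbert scheme, as carried out in Kleiman's exposition. The central geometric input is the dictionary between line bundles and effective divisors: a sufficiently positive line bundle acquires many global sections, and the zero loci of these sections are relative effective Cartier divisors, which are parameterized by (an open subscheme of) a Hilbert scheme that is already known to be representable. So the strategy is to present $\Pic_X$ as a quotient of a Hilbert scheme and then descend the scheme structure along that presentation.

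First I would fix the ample $\OO_X(1)$ coming from the projective structure and stratify by Hilbert polynomial. For a numerical polynomial $P$, let $\Pic_X^P$ denote the subfunctor of line bundles $L$ with $\chi(L(n)) = P(n)$; since the Hilbert polynomial is locally constant in flat families, $\Pic_X^P$ is open and closed in $\Pic_X$, and it suffices to represent each piece and then take the disjoint union. I would also use the $\kappa$-point $x_0 \in X$ to rigidify: replacing a line bundle by the pair consisting of the bundle together with a trivialization along $x_0$ kills the ambiguity by $\pi_T^*\Pic(T)$ and explains why no étale sheafification is needed, exactly as remarked in the text.

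Next, for $L \in \Pic_X^P(T)$ I would twist by a large $\OO_X(m)$ so that $L(m)$ is relatively globally generated with $R^i\pi_{T*}(L(m)) = 0$ for $i>0$ on every fiber. By cohomology and base change, $\pi_{T*}(L(m))$ is then a vector bundle whose formation commutes with base change, and the zero scheme of a section of $L(m)$ is a relative effective Cartier divisor of a fixed Hilbert polynomial $Q$. The functor of such divisors is representable by an open subscheme $H$ of the Hilbert scheme, and this yields the Abel map $A : H \rightarrow \Pic_X^{P'}$ sending a divisor $D$ to $\OO_X(D)$. The crucial structural claim is that, over the locus where the relevant $H^1$ vanishes, $A$ realizes $H$ as the projectivization of $\pi_{T*}(L(m))$, so that the fiber over a class $[M]$ is precisely the complete linear system $\PP(H^0(X,M))$. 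Verifying this carefully — that $A$ is an fppf-locally trivial $\PP^N$-bundle whose total space is the representable scheme $H$ — is the technical heart of the argument, and it is where semicontinuity and base change must be handled with care.

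Finally I would descend. A functor admitting a smooth surjection from a representable scheme, with fibers that are projective spaces, is itself representable, and by faithfully flat descent the quotient inherits a scheme structure; this represents $\Pic_X^P$ by a scheme locally of finite type over $\kappa$, hence locally noetherian, and the disjoint union over all $P$ gives $\pic_X$. For quasi-projectivity of the components, I would note that $H$ lies inside a projective Hilbert scheme and is therefore quasi-projective, and the quotient of a quasi-projective scheme by a $\PP^N$-bundle relation is again quasi-projective; since each $\Pic_X^P$ is open and closed, every connected component is contained in a single quasi-projective piece. The main obstacle, as indicated, is the projective-bundle structure of the Abel map together with the descent step that converts representability of the Hilbert scheme into representability of the quotient; the cohomology-and-base-change bookkeeping needed to control $\pi_{T*}(L(m))$ uniformly across a stratum is the part most likely to demand care.
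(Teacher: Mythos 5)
This statement is not proved in the paper at all: it is quoted verbatim from Kleiman's exposition of the Picard scheme (the bracketed citation \cite[4.8]{kleiman2005picard} is the entire ``proof''), so there is nothing internal to compare your argument against. Your sketch is the standard Grothendieck--Kleiman construction that constitutes the proof in the cited source --- stratification by Hilbert polynomial, rigidification along the rational point, the Abel map from an open subscheme of the Hilbert scheme of divisors, the projective-bundle structure over the locus where $H^1$ of the twist vanishes, and descent --- so in substance you are reproducing the reference's approach rather than offering an alternative. The one place your outline is genuinely thinner than the source is the boundedness input: the claim that a single twist $m$ works uniformly for all $L$ in a given stratum (equivalently, that each connected component of $\pic_X$ sits inside one quasi-projective piece) is not automatic from semicontinuity alone and is exactly where Kleiman invokes a boundedness theorem for families of line bundles with fixed Hilbert polynomial; as written, your argument only produces an exhaustion by larger and larger open representable subfunctors, and quasi-projectivity of components needs that extra step.
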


By plugging $X$ into the definition of the $\Pic_X$ we obtain the following,

\begin{proposition}
    There exists a (non-unique) line bundle $\mathcal{U}$ on $X \times \pic_X$ such that for any scheme $T$ along with a line bundle $\mathcal{L}$ on $X \times T$, there exists a morphism $f: T \rightarrow \pic_X$ such that
    $$ \mathcal{L} \cong (1 \times f)^* \mathcal{U} \otimes \pi_T^* \mathcal{N}  $$
    where $\mathcal{N}$ is some line bundle on $T$.
\end{proposition}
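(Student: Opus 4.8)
The plan is to derive this directly from the representability of $\Pic_X$ by $\pic_X$ via the Yoneda lemma; the line bundle $\mathcal{U}$ will simply be a chosen representative of the universal class. First I would unwind what representability means, namely that there is a natural isomorphism of functors $\eta\colon \mathrm{Hom}(-,\pic_X) \xrightarrow{\sim} \Pic_X(-)$ on $Sch/\kappa$. Evaluating $\eta$ at $T = \pic_X$ and applying it to $\id_{\pic_X}$ produces a universal element $u := \eta_{\pic_X}(\id_{\pic_X}) \in \Pic_X(\pic_X)$. Since by definition $\Pic_X(\pic_X) = \Pic(X \times \pic_X)/\pi_{\pic_X}^*\Pic(\pic_X)$, the element $u$ is an equivalence class, and I would choose any line bundle $\mathcal{U} \in \Pic(X \times \pic_X)$ representing it. This choice is exactly where the asserted non-uniqueness originates: two representatives of $u$ differ by the pullback of a line bundle from $\pic_X$.

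Next, given a test scheme $T$ and a line bundle $\mathcal{L}$ on $X \times T$, I would let $[\mathcal{L}]$ denote its class in $\Pic_X(T)$ and define $f := \eta_T^{-1}([\mathcal{L}])\colon T \to \pic_X$. The heart of the argument is the naturality of $\eta$ with respect to $f$: the commuting square relating $\mathrm{Hom}(-,\pic_X)$ and $\Pic_X(-)$ forces
$$ [\mathcal{L}] = \eta_T(f) = \Pic_X(f)(u) = \big[(1 \times f)^*\mathcal{U}\big] $$
as elements of $\Pic_X(T)$. In words, pulling back the universal class along $f$ recovers the class of $\mathcal{L}$; this is the standard way one extracts a universal family from a representing object.

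Finally I would translate this equality of classes back into an isomorphism of line bundles. Equality in $\Pic_X(T) = \Pic(X \times T)/\pi_T^*\Pic(T)$ means precisely that $\mathcal{L} \otimes (1 \times f)^*\mathcal{U}^{-1}$ lies in the subgroup $\pi_T^*\Pic(T)$, i.e.\ is isomorphic to $\pi_T^*\mathcal{N}$ for some line bundle $\mathcal{N}$ on $T$; tensoring back yields the claimed $\mathcal{L} \cong (1 \times f)^*\mathcal{U} \otimes \pi_T^*\mathcal{N}$. There is no serious obstacle here, since the statement is a formal consequence of Yoneda applied to a representable functor. The only point requiring care is the bookkeeping with the quotient by $\pi_T^*\Pic(T)$: one must stay consistent about when two line bundles are identified in $\Pic_X$, so that the passage between equality of classes and isomorphism-up-to-pullback is handled correctly.
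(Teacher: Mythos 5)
Your proposal is correct and is exactly the argument the paper intends: the paper dispatches this proposition with the single remark that it follows ``by plugging [$\pic_X$] into the definition of $\Pic_X$,'' i.e.\ the Yoneda/universal-element argument you spell out, including the choice of representative modulo $\pi_T^*\Pic(T)$ that accounts for the non-uniqueness. No differences to report.
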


We call $\mathcal{U}$ a universal line bundle. We can see from the definition of the Picard functor that a universal line bundle is unique up to tensor with the pullback of a line bundle on $\pic_X$.

\subsection{The Derived Category}
Let $X$ be a scheme. Let $D(X)$ be the derived category of $X$. For definitions and basic properties of the derived category, see \cite[\href{https://stacks.math.columbia.edu/tag/05QI}{Tag 05QI}]{stacks-project2}.

We let $D^b(X)$ (resp. $D^-(X)$) denote the derived category of bounded (resp. bounded above) complexes. If $R$ is a commutative ring, we let $D(R) = D(\spec R)$ and define $D^b(R)$ and $D^-(R)$ similarly. 

We let $D_{coh}(X) \subseteq D(X)$ be the subcategory of objects whose cohomology sheaves are coherent. We define $D^-_{coh}(X)$ and $D^b_{coh}(X)$ similarly.

Given a left (resp. right) exact covariant functor $F: Mod(\OO_X) \rightarrow Mod(\OO_Y)$, we denote the right (resp. left) derived functor of $F$ as $RF$ (resp. $LF$). Given $K \in D(X)$, we denote $R^iF(K) = h^i(RF(K))$.

\subsection{Moduli of Sheaves}

Let $\mathcal{C}$ be a class of coherent sheaves on a variety $X$. To simplify things, we will define a \textbf{moduli space of sheaves} for $\mathcal{C}$ to be a locally noetherian scheme $M$ along with a coherent sheaf $\mathcal{U}$ on $X \times M$ that is flat over $M$ such that for each $\F \in \mathcal{C}$, there exists a unique closed point $y \in M$ such that $\mathcal{U}_y \cong \F$. We call $\mathcal{U}$ the \textbf{universal sheaf}. Because of the condition on $\mathcal{U}$, we can refer to $\F$ as a point of $M$ without confusion. Usually one would require $M$ and $\mathcal{U}$ to satisfy some universal property in some sense that applies to the moduli problem at hand, but we have no need for this here. For a proper treatment of moduli of sheaves, see the book \cite{Huybrechts_Lehn_2010} by Huybrechts and Lehn.

The main example of a moduli of sheaves to keep in mind is the Picard scheme of a projective variety, see Proposition \ref{picard-scheme}. 

Now let us look at the case of degree $d$ line bundles on a smooth curve $C$. The moduli space in this case is $\pic^d_C$ and we have a universal line bundle $L$ on $C \times \pic^d_C$. Classical Brill-Noether theory uses $L$ to construct a natural determinantal scheme structure on the set 
$$ W_d^k = \{\LL \in \pic_C^d : h^0(C,\LL) \geq k + 1\} $$

In Corollary \ref{BN-moduli-of-sheaves} we generalize this to any moduli space of sheaves.

\section{Review of Determinantal Schemes and Fitting Ideals} \label{fitting ideals-chapter}

We fix a commutative noetherian ring $R$ for this section and the next.

\subsection{Ideals of Minors and Determinantal Schemes}
The results of this section and the next all rely on ideals of minors of maps of vector bundles so we will review their basic properties here.

Let $R^{\oplus n} \overset{\phi}{\rightarrow} R^{\oplus m}$, we can represent $\phi$ as a $n$ by $m$ matrix with entries in $R$. We denote the ideal of $R$ that is generated by the $k \times k$ minors of $\phi$ as $I_k(\phi)$.

Let $X$ be a scheme. Given a map of vector bundles on $X$, $\mathcal{E} \overset{\psi}{\rightarrow} \mathcal{W}$, we can glue together the ideals of minors that we get on local trivializations of $\mathcal{E}$ and $\mathcal{W}$ to obtain a sheaf of ideals $I_k(\psi)$. We let $X_k(\psi)$ denote the scheme defined by $I_{k+1}(\psi)$.

\begin{proposition}
    Let $X$ be a scheme and $\mathcal{E} \overset{\psi}{\rightarrow} \mathcal{W}$ a map of vector bundles on $X$. Then
    $$ \tr{Supp } X_k(\psi) = \{ p \in X : \tr{rk}(\psi_p) \leq k\} $$
\end{proposition}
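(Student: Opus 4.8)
The plan is to reduce immediately to the affine, trivialized situation and then run the standard linear-algebra argument relating the vanishing of minors to a bound on the rank. Since both the support of $X_k(\psi)$ and the quantity $\tr{rk}(\psi_p)$ are local on $X$, and since the ideal sheaf $I_{k+1}(\psi)$ is by construction glued from the ideals of minors on trivializing opens, I may assume $X = \spec R$ with $\mathcal{E}$ and $\mathcal{W}$ free, so that $\psi$ is represented by a matrix with entries in $R$ and $I_{k+1}(\psi)$ is the honest ideal of $R$ generated by its $(k+1) \times (k+1)$ minors.

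In this setting a point $p$ corresponds to a prime $\mathfrak{p} \subseteq R$, and $p \in \tr{Supp } X_k(\psi) = V(I_{k+1}(\psi))$ precisely when $I_{k+1}(\psi) \subseteq \mathfrak{p}$, i.e.\ when every $(k+1) \times (k+1)$ minor of $\psi$ lies in $\mathfrak{p}$. The key observation is that each such minor is a fixed polynomial expression (a signed sum of products) in the matrix entries, so its image under the reduction map $R \to \kappa(p)$ is exactly the corresponding minor of the reduced matrix $\psi_p$, the matrix representing the $\kappa(p)$-linear map $\mathcal{E}_p \otimes \kappa(p) \to \mathcal{W}_p \otimes \kappa(p)$. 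Since an element of $R$ lies in $\mathfrak{p}$ if and only if its image in $\kappa(p)$ vanishes, the condition $I_{k+1}(\psi) \subseteq \mathfrak{p}$ is equivalent to the vanishing in $\kappa(p)$ of all $(k+1) \times (k+1)$ minors of $\psi_p$.

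It then remains to invoke the classical fact from linear algebra over the field $\kappa(p)$: a matrix has rank $\leq k$ if and only if all of its $(k+1) \times (k+1)$ minors vanish. Combining this with the previous paragraph gives
$$ p \in \tr{Supp } X_k(\psi) \iff \text{all } (k+1)\text{-minors of } \psi_p \text{ vanish} \iff \tr{rk}(\psi_p) \leq k, $$
which is the claimed equality of sets.

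There is no serious obstacle here; the only point requiring care is the compatibility of the formation of minors with the base change $R \to \kappa(p)$, which is immediate once one records that determinants are polynomial in the entries and hence commute with ring homomorphisms. This is precisely what lets one pass between the ideal-theoretic description of $\tr{Supp } X_k(\psi)$ and the pointwise rank condition.
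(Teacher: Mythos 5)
Your proposal is correct and follows essentially the same route as the paper's proof: reduce to a local trivialization where $\psi$ is a matrix $(r_{ij})$, note that $\psi_p$ is represented by the evaluated matrix $(r_{ij}(p))$ because minors commute with the reduction $R \to \kappa(p)$, and conclude via the linear-algebra fact that $\tr{rk}(\psi_p) \leq k$ if and only if all $(k+1)\times(k+1)$ minors vanish. You simply spell out the compatibility with base change more explicitly than the paper does.
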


\begin{proof}
    Let $p \in X$. Let $\psi$ be locally represented by the matrix $(r_{ij})$. Then the map $\psi_p: \mathcal{E} \otimes k(p) \rightarrow \mathcal{W} \otimes k(p)$ is represented by the matrix $(r_{ij}(p))$. The map $\psi_p$ has rank $\leq k$ if and only if all the $(k+1) \times (k+1)$ minors of $(r_{ij}(p))$ vanish which happens if and only if $p \in I_{k+1}(\psi)$.
\end{proof}

\begin{proposition} \label{minors-basechange}
   Let $f: Y \rightarrow X$ be a morphism of schemes. Let $\mathcal{E} \overset{\psi}{\rightarrow} \mathcal{W}$ be a morphism of vector bundles on $X$. Then $f^{-1}I_k(\psi) = I_k(f^* \psi)$.
\end{proposition}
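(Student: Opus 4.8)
The plan is to reduce the statement to the affine case in which both vector bundles are trivial, and then to observe that passing to $k \times k$ minors is a polynomial operation on matrix entries and so is compatible with any ring homomorphism. Since the ideal sheaf $I_k(\psi)$ is itself \emph{defined} by gluing the ideals of minors over an open cover trivializing $\mathcal{E}$ and $\mathcal{W}$, and since formation of the inverse image ideal $f^{-1}(-)$ is local on both $X$ and $Y$, it suffices to verify the identity over affine opens. Concretely, I would choose an affine open $\spec R \subseteq X$ over which $\mathcal{E} \cong R^{\oplus n}$ and $\mathcal{W} \cong R^{\oplus m}$, together with an affine open $\spec S \subseteq Y$ mapping into it, corresponding to a ring homomorphism $\varphi: R \to S$.

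Over $\spec R$, the map $\psi$ is represented by a matrix $(r_{ij})$ with $r_{ij} \in R$, and $I_k(\psi)$ is the ideal generated by all $k \times k$ minors of $(r_{ij})$. The pullback $f^*\psi$ over $\spec S$ is represented by the matrix $(\varphi(r_{ij}))$, since pulling back a map of free modules along $R \to S$ applies $\varphi$ entrywise. The key computational point, which I would not grind through in detail, is that each $k \times k$ minor is a fixed signed sum of products of matrix entries, and a ring homomorphism respects addition and multiplication; hence the $k \times k$ minors of $(\varphi(r_{ij}))$ are exactly the images under $\varphi$ of the corresponding $k \times k$ minors of $(r_{ij})$. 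Consequently $I_k(f^*\psi)$ is the ideal of $S$ generated by the elements $\varphi(m)$, where $m$ ranges over the $k \times k$ minors of $(r_{ij})$, which is precisely the ideal generated by the image of $I_k(\psi)$, that is, the inverse image ideal $f^{-1}I_k(\psi)$ on $\spec S$.

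Finally, I would note that these local identifications are compatible with passing to smaller opens, so they glue to the global identity $f^{-1}I_k(\psi) = I_k(f^*\psi)$. I expect the only genuine subtlety to be the bookkeeping of well-definedness: one must know that the locally defined ideal of $k \times k$ minors is independent of the chosen trivializations, so that both the original gluing and the pulled-back gluing make sense. This is exactly the invariance of the ideal of minors under pre- and post-composition of $\psi$ with invertible matrices (a change of trivialization multiplies $(r_{ij})$ by units on each side, and the ideal $I_k$ is unchanged by such multiplication), which is the point already used to define $I_k(\psi)$ globally. Granting that, the matching of local data is immediate from the entrywise description of $f^*\psi$, and no further difficulty arises.
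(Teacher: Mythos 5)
Your proposal is correct and is the same argument the paper gives, just spelled out: the paper's entire proof is the observation that if $\psi$ is locally represented by the matrix $(r_{ij})$ then $f^*\psi$ is locally represented by $(f^*r_{ij})$, from which the equality of ideals of minors is immediate. Your additional remarks on gluing and independence of trivialization are fine but are implicit in the paper's definition of $I_k(\psi)$.
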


\begin{proof}
   If $\psi$ is locally represented by the matrix $(r_{ij})$ then $f^* \psi$ is locally represented by $(f^* r_{ij})$.
\end{proof}

The following is well known to experts but we give a proof for convenience.
\begin{proposition} \label{sing-determinantal-subschemes}
    Let $\mathcal{E} \overset{\psi}{\rightarrow} \mathcal{W}$ be a map of vector bundles on a variety $X$. Suppose that $X_{k+1}(\psi) \neq X$. Then
    $$ X_k(\psi) \subseteq \tr{Sing } X_{k+1}(\psi) $$
\end{proposition}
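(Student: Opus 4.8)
The plan is to reduce to a local statement at a closed point and then show that the ideal defining $X_{k+1}(\psi)$ degenerates so strongly along $X_k(\psi)$ that the Zariski cotangent space there is too large for regularity. Since everything is local, I would fix a closed point $p\in X_k(\psi)$, write $\OO=\OO_{X,p}$ for the local ring with maximal ideal $\m$ and residue field $\kappa$, and trivialize $\mathcal{E},\mathcal{W}$ near $p$ so that $\psi$ is given by a matrix $(r_{ij})$ over $\OO$. Because $I_{k+2}(\psi)\subseteq I_{k+1}(\psi)$ we have $X_k(\psi)\subseteq X_{k+1}(\psi)$, so $p$ lies on $X_{k+1}(\psi)$, which near $p$ is cut out by $I:=I_{k+2}((r_{ij}))\subseteq\OO$. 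By the description of $\tr{Supp } X_k(\psi)$ above, $p\in X_k(\psi)$ means $\tr{rk}(\psi_p)\le k$; consequently every $(k+1)$-minor and every $(k+2)$-minor of $(r_{ij})$ vanishes at $p$.

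The heart of the argument is the claim that in fact $I\subseteq\m^2$. Here I would Taylor-expand each generating $(k+2)$-minor $M$ to first order: $M$ is a fixed determinant polynomial $P$ in the entries $r_{ij}$ of the relevant $(k+2)\times(k+2)$ submatrix, so writing $r_{ij}=r_{ij}(p)+t_{ij}$ with $t_{ij}\in\m$ gives $M\equiv M(p)+\sum_{i,j}c_{ij}\,t_{ij}\pmod{\m^2}$, where $c_{ij}\in\kappa$ is the value at $p$ of the formal partial derivative $\partial P/\partial x_{ij}$. Now $M(p)=0$, and by cofactor expansion each $\partial P/\partial x_{ij}$ is, up to sign, the complementary minor of that submatrix, i.e.\ a $(k+1)$-minor of $(r_{ij})$; these all vanish at $p$, so every $c_{ij}=0$. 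Hence $M\in\m^2$, and therefore $I\subseteq\m^2$.

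From here the conclusion is formal. Because $I\subseteq\m^2$, the local ring $B=\OO/I$ of $X_{k+1}(\psi)$ at $p$ satisfies $\m_B/\m_B^2\cong\m/\m^2$, so its embedding dimension equals that of $\OO$, which is at least $\dim\OO=\dim X=:d$. On the other hand $X_{k+1}(\psi)$ is a proper closed subset of the irreducible variety $X$, so every irreducible component through $p$ has dimension $<d$, whence $\dim B<d$. Therefore $\dim_\kappa\m_B/\m_B^2\ge d>\dim B$, so $B$ is not regular and $p\in\sing(X_{k+1}(\psi))$. As closed points are dense in $X_k(\psi)$ and $\sing(X_{k+1}(\psi))$ is closed, this gives $X_k(\psi)\subseteq\sing(X_{k+1}(\psi))$.

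The step I expect to be most delicate is establishing $I\subseteq\m^2$, which relies on the cofactor identity for the partial derivatives of a determinant together with a clean first-order expansion inside $\OO/\m^2$. A pleasant feature of organizing the argument through embedding dimension rather than a Jacobian criterion is that smoothness of $X$ is never used: the inequality (embedding dimension)$\,\ge d>\dim B$ holds on an arbitrary, possibly singular, variety, and the hypothesis $X_{k+1}(\psi)\neq X$ is exactly what forces $\dim B<d$ via irreducibility.
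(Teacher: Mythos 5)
Your proof is correct and rests on the same key fact as the paper's: the first-order part of each $(k+2)\times(k+2)$ minor at a point where $\psi$ has rank $\le k$ is a $\kappa$-combination of complementary $(k+1)\times(k+1)$ minors and hence vanishes, so the ideal of $X_{k+1}(\psi)$ lands in $\m^2$ and the subscheme has full Zariski tangent space at every point of $X_k(\psi)$. The only difference is packaging: the paper pulls this back from the generic determinantal variety and cites ACGH II.2 for the tangent-space computation, whereas you prove it directly by cofactor expansion and phrase the conclusion via embedding dimension rather than tangent spaces --- a self-contained but mathematically identical route (and, like the paper, you correctly avoid any smoothness assumption on $X$).
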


\begin{proof}
    Let $n$ be the rank of $\mathcal{E}$ and $m$ be the rank of $\mathcal{W}$. Let $M = \mathbb{A}^{nm}$ be the space of $n \times m$ matrices. Working locally, we may assume both vector bundles are trivial and we obtain a map
    $$ \lambda: X \rightarrow M $$
    that maps a point $p$ to the matrix $\psi_p$. The $k^{th}$ \textit{generic determinantal variety}, $M_k$, is defined by the vanishing of the $(k+1) \times (k+1)$ minors in the coordinates of $M$. We see that $X_k(\phi)$ is the scheme theoretic preimage of $M_k$ under $\lambda$.

    It follows from the discussion in \cite[II.2]{ACGH1} that given a point $A \in M_k$, the tangent space of $M_{k+1}$ is $nm-$dimensional. In other words,
    $$ \mathcal{T}_A(M_{k+1}) = \mathcal{T}_A(M) $$

    Translating this back to $X$, we have that given a point $x \in X_k(\phi)$,
    $$ \mathcal{T}_x(X_{k+1}(\phi)) = \mathcal{T}_x(X) $$

    So if $X_{k+1}(\phi) \neq X$, then $x$ is singular in $X_{k+1}(\phi)$.
\end{proof}

Another fact that we obtain through this same method is that if $X_k(\phi) \neq \varnothing$ then it has codimension at most $(n-k)(m-k)$.

See \cite[II]{ACGH1} for more details on determinantal subschemes.

\subsection{Fitting Ideals}

First let us review the classical theory of Fitting ideals originally introduced in \cite{fitting1936}. Let $M$ be an finitely generated $R$-module and let 
$$ F \overset{\phi}{\rightarrow} R^{\oplus r} \rightarrow M \rightarrow 0 $$
be a free presentation of $M$. We define the $k^{th}$ Fitting ideal, $\fitt_k(M)$, to be $I_{r-k}(\phi)$. That is, $\fitt_k(M)$ is the ideal of $(n-k) \times (n-k)$ minors of $\phi$. So the Fitting ideals form an increasing sequence,
$$ 0 = \fitt_{-1}(M) \subseteq \fitt_{0}(M) \subseteq \fitt_1(M) \subseteq \dots \fitt_r(M) = R $$
where $r$ is the number of generators for $M$ used in the free presentation. Note that $\fitt_k(M)$ can equal $R$ even if $M$ cannot be generated by $k$ elements. The crucial fact about Fitting ideals is that they are independent of the choice of free resolution.

\begin{theorem}[{\cite[\href{https://stacks.math.columbia.edu/tag/07Z6}{Section 07Z6}]{stacks-project2}}] \label{fitting}
    $\fitt_k(M)$ is independent of the choice of free resolution of $M$.
\end{theorem}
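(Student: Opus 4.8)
The plan is to analyze how $\fitt_k(M) = I_{r-k}(\phi)$ can change as we vary the free presentation $R^{\oplus s} \xrightarrow{\phi} R^{\oplus r} \to M \to 0$. Two independent choices are involved: the surjection $\pi \colon R^{\oplus r} \to M$ (equivalently, the chosen generators of $M$), and the map $\phi$ onto the kernel $K = \ker \pi$ (equivalently, the chosen generators of $K$). I would isolate these and treat them separately, showing that neither affects the ideal.

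First I would show that $I_j(\phi)$ depends only on the submodule $K = \mathrm{im}(\phi) \subseteq R^{\oplus r}$, not on the chosen generators of $K$. The key identity is that $I_j(\phi)$ equals the \emph{content} of $\mathrm{im}(\wedge^j \phi) \subseteq \wedge^j R^{\oplus r}$, i.e.\ the ideal generated by the coordinates of all its elements in the standard basis; indeed the coordinate of $\phi(e_{i_1}) \wedge \cdots \wedge \phi(e_{i_j})$ along a basis wedge is exactly a $j \times j$ minor of $\phi$, and these minors generate $I_j(\phi)$. Since $\phi$ factors as a surjection $R^{\oplus s} \twoheadrightarrow K$ followed by the inclusion $K \hookrightarrow R^{\oplus r}$, and exterior powers preserve surjectivity, $\mathrm{im}(\wedge^j \phi)$ equals the image of $\wedge^j K \to \wedge^j R^{\oplus r}$, which manifestly depends only on $K$. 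This settles independence from the choice of generators of the kernel.

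Next I would prove a stabilization lemma: enlarging the generating set of $M$ by one element leaves $\fitt_k(M)$ unchanged. Concretely, given $\pi$ with kernel $K$ and any $m = \pi(w) \in M$, the presentation $\tilde\pi \colon R^{\oplus(r+1)} \to M$ defined by $\tilde\pi(x,t) = \pi(x) + tm$ has kernel generated by $K$ (sitting in the first $r$ coordinates) together with the element $(-w,1)$. Applying the change of basis $(x,t) \mapsto (x+tw, t)$ of $R^{\oplus(r+1)}$, under which ideals of minors are invariant, carries this kernel to $K \oplus R e_{r+1}$, whose presentation matrix has the block form $\left(\begin{smallmatrix} \phi & 0 \\ 0 & 1 \end{smallmatrix}\right)$. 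A direct Laplace expansion along the unit row then shows $I_{(r+1)-k}$ of this block matrix equals $I_{r-k}(\phi)$, so $\fitt_k(\tilde\pi) = \fitt_k(\pi)$; iterating handles adjoining any finite number of generators of $M$.

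Finally I would combine these. Given two presentations with surjections $\pi \colon R^{\oplus r} \to M$ and $\pi' \colon R^{\oplus r'} \to M$, I would form the ``sum'' surjection $\pi'' \colon R^{\oplus(r+r')} \to M$, $\pi''(x,y) = \pi(x) + \pi'(y)$. This is obtained from $\pi$ by adjoining the $r'$ generators $\pi'(e_1'), \dots, \pi'(e_{r'}')$, so the stabilization lemma gives $\fitt_k(\pi'') = \fitt_k(\pi)$; by symmetry $\fitt_k(\pi'') = \fitt_k(\pi')$, whence $\fitt_k(\pi) = \fitt_k(\pi')$, and the two Fitting ideals coincide. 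I expect the main obstacle to be the first step: correctly identifying $I_j(\phi)$ with the content of the exterior-power image, and then handling the minor bookkeeping in the block computation of the stabilization lemma. Once that invariance under the choice of kernel generators is in hand, the stabilization and the comparison of two presentations are essentially formal.
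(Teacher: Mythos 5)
The paper gives no proof of this statement---it is cited directly from the Stacks project---and your argument is correct and is essentially the standard proof found in that reference (and in Eisenbud, Cor.--Def.\ 20.4): first the exterior-power/content identification showing $I_j(\phi)$ depends only on $\ker\pi$, then the one-generator stabilization via the block matrix $\left(\begin{smallmatrix} \phi & 0 \\ 0 & 1 \end{smallmatrix}\right)$ giving $I_j = I_{j-1}(\phi)$, then comparison of two presentations through their common refinement. All three steps, including the index bookkeeping $I_{(r+1)-k}=I_{r-k}(\phi)$, check out.
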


Fitting ideals commute with base change since the pullback of a free presentation is a free presentation, therefore we can glue together fitting ideals for an arbitrary coherent sheaf $\F$ on a scheme $X$. More precisely, $\fitt_k(\F)$ is the sheaf of ideals where
$$ \Gamma(U,\fitt_k(\F)) = \fitt_k(\Gamma(U,\F)) $$
for any open affine $U \subseteq X$. One useful property of Fitting ideals is that they characterize when a sheaf is locally free.

\begin{proposition}[{\cite[\href{https://stacks.math.columbia.edu/tag/07ZD}{Tag 07ZD}]{stacks-project2}}] \label{locally-free-fitting}
    A coherent sheaf $\F$ on a noetherian scheme $X$ is locally free of rank $k$ if and only if $\fitt_k(\F) = \OO_X$ and $\fitt_{k-1}(\F) = 0$.
\end{proposition}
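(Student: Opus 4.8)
The plan is to reduce everything to a statement about finitely generated modules over a noetherian local ring, since the assertion, the formation of $\fitt_k$, and the notion of being locally free are all local on $X$. Because $X$ is noetherian, $\F$ is finitely presented, and a finitely presented sheaf whose stalk at $p$ is free is automatically free on a Zariski neighborhood of $p$; so it suffices to check freeness of every stalk $\F_p$. Concretely, I would prove the equivalent algebraic statement: for a finitely generated module $M$ over a noetherian local ring $(R,\m)$, $M$ is free of rank $k$ if and only if $\fitt_k(M) = R$ and $\fitt_{k-1}(M) = 0$. Two standard inputs will be used repeatedly: first, that Fitting ideals commute with base change, so that $\fitt_j(M) \otimes_R R/\m = \fitt_j(M/\m M)$; and second, the elementary computation that for a vector space $V$ of dimension $d$ over a field one has $\fitt_j(V) = 0$ for $j < d$ and $\fitt_j(V)$ equal to the whole field for $j \geq d$ (take the presentation $0 \to V$ and read off the minors of the zero map).

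For the forward direction, if $M \cong R^{\oplus k}$ then the presentation $0 \xrightarrow{\phi} R^{\oplus k} \to M \to 0$ has $\phi$ the (empty) zero map out of the zero module, so by the conventions on ideals of minors $\fitt_k(M) = I_0(\phi) = R$ and $\fitt_{k-1}(M) = I_1(\phi) = 0$. By Theorem~\ref{fitting} these are independent of the chosen presentation, and since the construction glues, this yields $\fitt_k(\F) = \OO_X$ and $\fitt_{k-1}(\F) = 0$ globally.

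For the reverse direction, fix $p$ and set $M = \F_p$ over $(R,\m) = \OO_{X,p}$. Base-changing the hypotheses to the residue field and invoking the vector-space computation, $\fitt_k(M/\m M) = R/\m \neq 0$ forces $\dim_{R/\m} M/\m M \leq k$, while $\fitt_{k-1}(M/\m M) = 0$ forces $\dim_{R/\m} M/\m M \geq k$; hence the fiber dimension is exactly $k$. By Nakayama, $M$ is minimally generated by $k$ elements, yielding a presentation $R^{\oplus s} \xrightarrow{\phi} R^{\oplus k} \to M \to 0$ in which every entry of $\phi$ lies in $\m$ (a unit entry would express one of the chosen generators in terms of the others, contradicting minimality). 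Then $\fitt_{k-1}(M) = I_1(\phi)$ is precisely the ideal generated by the entries of $\phi$, so the hypothesis $\fitt_{k-1}(M) = 0$ gives $\phi = 0$ and therefore $M \cong R^{\oplus k}$ is free of rank $k$.

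The step I expect to be the crux is this last direction: the real content is translating $\fitt_{k-1} = 0$, through a \emph{minimal} presentation, into the vanishing of the entire relation matrix, which requires care both about the conventions for $I_j$ and about the minimality claim. The only other delicate point is the globalization—upgrading stalkwise freeness to genuine local freeness—which is exactly where noetherianity of $X$, and hence finite presentation of $\F$, is used.
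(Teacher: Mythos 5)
Your proof is correct; the paper does not prove this proposition but cites it to the Stacks Project, and your argument---reduction to a noetherian local ring, the fiber-dimension count via base change of Fitting ideals, and a minimal presentation whose relation matrix is forced to vanish by $\fitt_{k-1}(M)=0$---is essentially the standard proof given in that reference. The only superfluous step is the observation that the entries of $\phi$ lie in $\m$: once the presentation has exactly $k$ generators, $\fitt_{k-1}(M)=I_1(\phi)=0$ already forces $\phi=0$ without appealing to minimality of the relations.
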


Fitting ideals are useful for giving a natural scheme structure to certain closed subsets of schemes. For example, given a finite type scheme $X$ of pure dimension $n$, we can give the singular locus of $X$ a natural scheme structure as follows. Consider a local embedding of $X$ into $\mathbb{A}^n$ where the ideal of $X$ is generated by $f_1,\dots,f_m$. The Jacobian criterion for smoothness says that $X$ is singular at a point $x$ if the $n-d \times n-d$ minors of the Jacobian matrix,

\[
\begin{bmatrix}
    \frac{\partial f_1}{\partial x_1}     & \dots & \frac{\partial f_1}{\partial x_n} \\
    \vdots & \ddots & \vdots \\
    \frac{\partial f_m}{\partial x_1}       & \dots & \frac{\partial f_m}{\partial x_n}
\end{bmatrix}
\]
all vanish at $x$. So it is natural to define $Sing(X)$ as the closed subscheme of $X$ defined by these minors. A priori, this depends on the embedding but we can use Fitting ideals to show that the scheme structure is independent of the embedding.

\begin{proposition}
Let $X$ be a scheme of finite type and pure dimension $d$. Then the subscheme defined by $\fitt_d(\Omega_X)$ has support equal to the non-smooth locus of $X$.
\end{proposition}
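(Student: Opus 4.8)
The plan is to reduce the statement to a local computation on an affine chart and to identify $\fitt_d(\Omega_X)$ with an ideal of minors of a Jacobian matrix. Working locally, I embed $X$ as a closed subscheme of $\mathbb{A}^n$ cut out by $f_1,\dots,f_m$ and consider the second fundamental (conormal) exact sequence $I/I^2 \to \OO_X^n \to \Omega_X \to 0$, where $\OO_X^n = \Omega_{\mathbb{A}^n}|_X$ has basis $dx_1,\dots,dx_n$ and $I/I^2$ is generated by the classes of $f_1,\dots,f_m$. Since the first map sends the class of $f_i$ to $df_i = \sum_j (\partial f_i/\partial x_j)\, dx_j$, this presents $\Omega_X$ as the cokernel of the Jacobian matrix $J = (\partial f_i/\partial x_j)$. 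By the definition of Fitting ideals, with $r = n$ generators in the target free module, I get $\fitt_d(\Omega_X) = I_{n-d}(J)$ on this chart, and Theorem \ref{fitting} guarantees that this ideal is independent of the chosen presentation, hence of the local embedding.

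Next I would translate the support statement into a rank condition. By the proposition relating supports of ideals of minors to ranks (the first proposition of this section), the closed subscheme cut out by $I_{n-d}(J)$ has support $\{x : \mathrm{rk}(J(x)) \leq n-d-1\}$. It remains to match this with the non-smooth locus. Because $X$ has pure dimension $d$, every closed point $x$ satisfies $\dim \OO_{X,x} = d$, and the Jacobian criterion (valid over the algebraically closed field $\kappa$, where regular $=$ smooth) gives $\mathrm{rk}(J(x)) \leq n-d$ always, with equality precisely when $X$ is smooth at $x$. Thus $X$ is singular at a closed point $x$ if and only if $\mathrm{rk}(J(x)) \leq n-d-1$, i.e. if and only if $x$ lies in the support of $\fitt_d(\Omega_X)$. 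Since the singular locus and the support are both closed and $X$ is a finite-type, hence Jacobson, scheme, equality on closed points yields equality of supports.

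The main technical points to get right are that the conormal sequence really yields a right-exact free presentation — so that the generators of $I/I^2$ together with the basis $dx_j$ produce exactly the Jacobian as presentation matrix — and that the Fitting-index bookkeeping matches, since $r = n$ forces the $(n-d)\times(n-d)$ minors. The only genuinely external input is the Jacobian criterion for smoothness; alternatively one can bypass the explicit Jacobian and argue pointwise, using that $\fitt_d(\Omega_X)_x = \OO_{X,x}$ if and only if $\Omega_{X,x}$ is generated by $d$ elements, i.e. $\dim_{k(x)} \Omega_X \otimes k(x) = \dim_{k(x)} \mathfrak{m}_x/\mathfrak{m}_x^2 \leq d$, which by the embedding-dimension criterion for regularity (combined with $\dim \OO_{X,x} = d$) holds exactly when $X$ is smooth at $x$. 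I expect the right-exactness of the presentation and the careful reduction to closed points to be the only places needing genuine attention; the remainder is a direct manipulation of indices.
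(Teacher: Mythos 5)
Your proposal is correct and follows essentially the same route as the paper: both reduce to a free presentation of $\Omega_X$ and the fact that $\dim_{k(x)}\Omega_X\otimes k(x)\geq d$ with equality exactly at smooth points, translated into a rank condition on the presentation matrix via the definition of $\fitt_d$. Your version is merely more explicit (Jacobian presentation, reduction to closed points via the Jacobson property), and the ``alternative'' pointwise argument you sketch at the end is precisely the paper's own proof.
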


\begin{proof}
Recall that for any point $p \in X$, $\dim \Omega_X \otimes k(p) \geq d$ with equality if and only if $X$ is smooth at $p$. We take a presentations of $\Omega_X$,
$$ \mathcal{E}^1 \overset{\phi}{\rightarrow} \mathcal{E}^0 \rightarrow \Omega_X \rightarrow 0 $$
with $r = \tr{rk } \mathcal{E}^0$. Then we see that the locus where $\dim \Omega_X \otimes k(p) \geq d$ is the locus where the rank of $\phi$ is less than $r-d$ which is exactly the locus cut out by $\fitt_d(\Omega_X)$.
\end{proof}

We define the \textit{singular set of $X$}, $Sing(X)$ to be the subscheme defined by $\fitt_d(\Omega_X)$. Note that given a local embedding of $X$ into $\mathbb{A}^n$, we get a finite presentation for $\Omega_X$
$$ \OO_X^{\oplus m} \overset{J}{\rightarrow} {\Omega_{\mathbb{A}^n}}_{|X} \rightarrow \Omega_X \rightarrow 0 $$
where $J$ is the Jacobian matrix. So our new definition for $Sing(X)$ agrees with our intuitive one in the discussion above.

\begin{example}
Consider the quadric cone $z^2 - xy = 0$ in $\mathbb{A}^3$. The Jacobian is 

\[
\begin{bmatrix}
    -y     & -x & 2z \\
\end{bmatrix}
\]

So the singular locus will be $x=y=z=0$, the reduced origin. Now consider the cusp $y^2 - x^3 = 0$ in $\mathbb{A}^2$. Its Jacobian is

\[
\begin{bmatrix}
    -3x^2     & 2y \\
\end{bmatrix}
\]

So the singular locus will be the non-reduced scheme whose ideal is $(x^2,y)$. As we can see, the scheme structure of the singular locus can give information about the singularity is even when it is just a single point.

\end{example}

\section{Higher Fitting Ideals} \label{higher fitting ideals-chapter}

In this section we generalize the notion of Fitting ideals to an object of the derived category $D_{coh}^-(X)$.

\subsection{Definition and Basic Properties of Higher Fitting Ideals} \label{higher_fitt}

In this section, we define Fitting ideals for a bounded above complex of $R$-modules. See \cite{MR3751292} for a different way of generalizing Fitting ideals to complexes which they call cohomology jump ideals. 

Given a bounded above complex $K$, we can always take a free resolution, that is, we can find a bounded above complex of free $R$-modules $F$ and a quasi-isomorphism $F \rightarrow K$. The main result of this section is that the ideals of minors of the maps in $F$ depend only on $K$. That is, if we took a different free resolution we would get the same ideals of minors. In the case where $K$ is a single module concentrated in degree 0 and $F$ is a free resolution of $K$, ideals of minors of the first map in $F$ are exactly the classical Fitting ideals of $K$. Just like in the classical case, we need to keep track of the ranks of the modules in $F$ to obtain a precise statement.

\textbf{Notation:} In this section, we use chain complex notation instead of cochain complex notation which is used in the rest of the paper. This is because it is much cleaner when working with bounded above objects and projective resolutions. 

Now consider a bounded above complex of free $R$-modules

\begin{center}
\begin{tikzcd}
\dots \arrow[r, "d_{N+3}"]         &  F_{N+2} \arrow[r, "d_{N+2}"]  & F_{N+1} \arrow[r, "d_{N+1}"]  & F_{N} \arrow[r] & 0 
\end{tikzcd}
\end{center}
Define
$$ r_i = \tr{rank } F_i $$
$$ \chi_i = \sum_{j= N}^{i-1} (-1)^{i-j-1} r_j = r_{i-1}-r_{i-2}+\dots+ (-1)^{i-N-1} r_N$$

Note that if $F_\kdot$ is an exact complex of vector spaces then $\chi_i = \tr{rank } d_i$ so we may think of $\chi_i$ as the "expected rank" of $d_i$. Now we make our main definition.

\begin{definition}
Define the \textbf{Fitting ideals} of $F_\kdot$ to be
$$\fitt_k^i(F_\kdot) = I_{\chi_i - k}(d_i)$$

We define these for bounded above cochain complexes the same way with $\fitt_k^i(F_{\kdot}) = I_{\chi_i - k}(d^i)$, where $\chi_i$ is the alternating sum of the ranks of the modules above $F^i$.
\end{definition}

Geometrically, these ideals cut out the points where the rank of $d_i$ drops below its expected rank by at least $k$. Our main result in this section is that Fitting ideals are invariant under quasi-isomorphisms. 

\begin{theorem} \label{Fitting-ideal}
    Let $a: F_\kdot \rightarrow \widetilde{F}_{\cdot}$ be a quasi-isomorphism of bounded above complexes of free $R$-modules. Then $\fitt_k^i(F_\kdot) = \fitt_k^i(\widetilde{F}_\kdot)$ for all $i$ and $k$.
\end{theorem}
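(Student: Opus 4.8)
The plan is to express the quasi-isomorphism $a$ as a combination of two operations that manifestly preserve every $\fitt_k^i$: isomorphism of complexes, and adjoining a contractible complex. Since $\fitt_k^i$ is built from ideals of minors, which commute with base change (Proposition \ref{minors-basechange}) and in particular with localization, and since equality of ideals may be checked after localizing at each prime, I would first reduce to the case that $R$ is local. Over a local ring every finitely generated projective module is free, so all ranks are well-defined integers and every contractible bounded above complex of finite free modules splits as a direct sum of elementary two-term complexes $D_p = (R^{a_p} \xrightarrow{\id} R^{a_p})$ placed in degrees $p$ and $p-1$.

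Next I would invoke the standard homological facts that a quasi-isomorphism between bounded above complexes of projectives is a homotopy equivalence, and that two homotopy-equivalent bounded above complexes of free modules become isomorphic after adjoining contractible complexes; that is, there exist contractible complexes $C_\kdot, C'_\kdot$ of free modules with $F_\kdot \oplus C_\kdot \cong \widetilde{F}_\kdot \oplus C'_\kdot$. Granting this, the theorem reduces to two lemmas. First, an isomorphism of complexes conjugates each differential $d_i$ by invertible matrices and leaves every rank unchanged; since multiplying a matrix on either side by an invertible matrix does not change its ideals of minors, and $\chi_i$ depends only on the ranks, isomorphic complexes have identical Fitting ideals. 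Second, and this is the crux, I claim that adjoining a contractible complex $C_\kdot = \bigoplus_p D_p$ changes no $\fitt_k^i$.

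For the second lemma the key computation is the minor identity
$$ I_k(M \oplus I_m) = I_{k-m}(M), $$
which holds because a nonzero $k \times k$ minor of $M \oplus I_m$ is a product of a $(k-b)\times(k-b)$ minor of $M$ with a principal $b \times b$ minor of $I_m$, and the ideals $I_{k-b}(M)$ increase with $b$. Adjoining $C_\kdot$ replaces $d_i$ by $d_i \oplus I_{a_i}$ augmented by zero rows and columns (the zero rows coming from the degree-$(i-1)$ part of $D_{i-1}$, which receives nothing, and the zero columns from the degree-$i$ part of $D_{i+1}$, which maps onward to zero), and these do not affect ideals of minors. Simultaneously the expected rank $\chi_i$ increases by exactly $a_i$: in the alternating sum defining $\chi_i$ the two ranks contributed by each piece $D_p$ enter with opposite signs and cancel, the sole exception being $D_i$, only whose degree-$(i-1)$ part lies in the summation range $\{N,\dots,i-1\}$. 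Combining, $I_{\chi_i + a_i - k}(d_i \oplus I_{a_i}) = I_{\chi_i - k}(d_i)$, so $\fitt_k^i$ is unchanged. Although $C_\kdot$ may be an infinite direct sum, for each fixed $(i,k)$ only the pieces $D_{i-1}, D_i, D_{i+1}$ touch $d_i$ and only $D_i$ shifts $\chi_i$, so infiniteness is harmless, and the conventions $I_k = R$ for $k \le 0$ handle the degrees lying below the shorter of the two complexes. The theorem then follows from the chain $\fitt_k^i(F_\kdot) = \fitt_k^i(F_\kdot \oplus C_\kdot) = \fitt_k^i(\widetilde{F}_\kdot \oplus C'_\kdot) = \fitt_k^i(\widetilde{F}_\kdot)$.

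I expect the main obstacle to be the precise bookkeeping in the second lemma: pinning down the minor identity $I_k(M \oplus I_m) = I_{k-m}(M)$ and matching the resulting shift against the change in $\chi_i$, carried all the way through the degrees where the two resolutions have different lengths and the boundary conventions on $I_k$ intervene. The reduction to the stable-isomorphism statement is standard but also deserves care, since the complexes need not be bounded and one must verify that a contractible complex of free modules of this kind really does decompose into the elementary free pieces $D_p$ used above.
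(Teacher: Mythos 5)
Your proof is correct, but it takes a genuinely different route from the paper's. The paper never localizes or invokes stable isomorphism explicitly: instead it uses the homotopy inverse $b$ and homotopy $s$ to build, degree by degree, a new quasi-isomorphism in which $F_1\oplus\widetilde{F}_0\to F_0\oplus\widetilde{F}_1$ carries the matrix $\bigl(\begin{smallmatrix} d_1 & -b\\ a & s\end{smallmatrix}\bigr)$ (Lemma \ref{reduct}); this reduces the degree-$i$ statement to the degree-$1$ statement for a modified pair of complexes, and the degree-$1$ case is exactly the classical Fitting lemma for $h^0$, which the paper quotes. Your argument instead localizes at each prime, replaces the quasi-isomorphism by the stable isomorphism $F_\kdot\oplus\mathrm{cone}(a)\cong\mathrm{cyl}(a)\cong\widetilde{F}_\kdot\oplus\mathrm{cone}(\id_{F_\kdot})$, splits the contractible summands into elementary pieces $R^{a_p}\xrightarrow{\id}R^{a_p}$ (which is where locality is genuinely needed, since the syzygies of a contractible complex are only projective in general), and checks that the shift $I_k(M\oplus I_m)=I_{k-m}(M)$ in the minors exactly cancels the shift $\chi_i\mapsto\chi_i+a_i$ in the expected rank. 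What your version buys is conceptual transparency and independence from the classical Fitting theorem, which falls out as the two-term special case rather than serving as the base of an induction; what the paper's version buys is that it works directly over any commutative noetherian ring with no localization and no appeal to the splitting theory of contractible complexes, at the price of the somewhat opaque bookkeeping in Lemma \ref{reduct} (whose matrix $B=\bigl(\begin{smallmatrix}1 & -b\\ a & 1-ab\end{smallmatrix}\bigr)$ is, in hindsight, a one-degree-at-a-time incarnation of your stabilization). The two standard facts you defer to — that a quasi-isomorphism of bounded above complexes of projectives is a homotopy equivalence, and that the degreewise split sequences $0\to F_\kdot\to\mathrm{cyl}(a)\to\mathrm{cone}(a)\to 0$ and $0\to\mathrm{cone}(\id_{F_\kdot})\to\mathrm{cyl}(a)\to\widetilde{F}_\kdot\to 0$ split because their contractible term forces the extension class to vanish — do hold in the bounded above setting, so there is no gap, but they should be written out if this were submitted as a complete proof.
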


For convenience, let us assume $F_\kdot$ and $\widetilde{F}_\kdot$ both vanish past degree zero. So we have a quasi-isomorphism,

\begin{center}
\begin{tikzcd}
\dots \arrow[r, "d_3"]         & F_2 \arrow[r, "d_2"] \arrow[d, "a"]  & F_1 \arrow[r, "d_1"] \arrow[d, "a"]  & F_0 \arrow[r] \arrow[d, "a"] & 0 \\
\dots \arrow[r, "\widetilde{d}_3"] & \widetilde{F}_2 \arrow[r, "\widetilde{d}_2"] & \widetilde{F}_1 \arrow[r, "\widetilde{d}_1"] & \widetilde{F}_0 \arrow[r]        & 0
\end{tikzcd}
\end{center}

To simplify notation, we will use 1 and 0 to denote the identity and zero maps respectively. We define
$$ B_i = \tr{im } d_{i+1} $$
$$ C_i = F_i / B_i $$
and $\widetilde{B_i}$, $\widetilde{C_i}$ for $\widetilde{F}_\kdot$ similarly.

Since all of the modules are free, $a$ has a homotopy inverse, $b: \widetilde{F}_\kdot \rightarrow F_\kdot$ (see \cite[I, Lemma 4.5]{MR0222093}). This means in particular that there is a map $s: \widetilde{F}_0 \rightarrow \widetilde{F}_1$ such that $\widetilde{d}_1 s = 1-ab$.

\begin{lemma} \label{reduct}

With the notation as above, the following morphism of complexes is a quasi-isomorphism
\begin{center}
\begin{tikzcd}
\dots \arrow[r, "d_4"]         & F_3 \arrow[r, "d_3"] \arrow[d, "a"]  & F_2 \arrow[r, "d_2 \oplus 0"] \arrow[d, "a"]  & F_1 \oplus \widetilde{F}_0 \arrow[r] \arrow[d, "A"] & 0 \\
\dots \arrow[r, "\widetilde{d}_4"] & \widetilde{F}_3 \arrow[r, "\widetilde{d}_3"] & \widetilde{F}_2 \arrow[r, "0 \oplus \widetilde{d}_2"] & F_0 \oplus \widetilde{F}_1 \arrow[r, ]                & 0
\end{tikzcd}
\end{center}
where $A = \begin{pmatrix}
    d_1 & -b \\
    a & s
\end{pmatrix}$
\end{lemma}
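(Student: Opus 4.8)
The plan is to first confirm that the displayed diagram is genuinely a morphism of complexes, and then prove it is a quasi-isomorphism by comparing cohomology degree by degree. The chain-map check is routine: the only square that is not immediate is the one between degree $2$ and degree $1$, where one needs $A\circ(d_2\oplus 0)=(0\oplus\widetilde{d}_2)\circ a$. Both composites send $x\in F_2$ to $(0,\,ad_2x)$, using $d_1d_2=0$ together with $\widetilde{d}_2a=ad_2$ (that $a$ is a chain map). Once this is in place, I would show the induced map on $H_i$ is an isomorphism for every $i$. For $i\le 0$ both complexes vanish, and for $i\ge 2$ the top complex agrees with $F_\kdot$ and the bottom with $\widetilde{F}_\kdot$ on cohomology (the degree-$1$ modification alters neither kernel nor image in those degrees), so the induced map there is simply $a_*$, an isomorphism because $a$ is a quasi-isomorphism. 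Thus all the content sits in degree $1$.

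In degree $1$ the top complex has $H_1=(F_1\oplus\widetilde{F}_0)/(B_1\oplus 0)=C_1\oplus\widetilde{F}_0$ and the bottom has $H_1=(F_0\oplus\widetilde{F}_1)/(0\oplus\widetilde{B_1})=F_0\oplus\widetilde{C_1}$, and a short computation identifies the induced map as
$$ \overline{A}=\begin{pmatrix}\overline{d_1} & -b\\ \overline{a} & \overline{s}\end{pmatrix}\colon C_1\oplus\widetilde{F}_0\longrightarrow F_0\oplus\widetilde{C_1}, $$
where $\overline{d_1}$, $\overline{a}$, $\overline{s}$ are the maps induced by $d_1$, $a$, $s$ on the relevant quotients. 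The lemma reduces to the claim that $\overline{A}$ is an isomorphism, and this is the main obstacle.

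To prove it I would use that $a$, being a quasi-isomorphism, induces a quasi-isomorphism between the two-term complexes $[\,C_1\xrightarrow{\overline{d_1}}F_0\,]$ and $[\,\widetilde{C_1}\xrightarrow{\overline{\widetilde{d}_1}}\widetilde{F}_0\,]$, which compute $(H_1,H_0)$ of $F_\kdot$ and $\widetilde{F}_\kdot$ respectively. The mapping cone of this quasi-isomorphism is acyclic, i.e. there is a short exact sequence
$$ 0\longrightarrow C_1\xrightarrow{\ \iota\ }F_0\oplus\widetilde{C_1}\xrightarrow{\ \pi\ }\widetilde{F}_0\longrightarrow 0,\qquad \iota=\begin{pmatrix}\overline{d_1}\\ \overline{a}\end{pmatrix},\quad \pi=(-a,\ \overline{\widetilde{d}_1}). $$
The first column of $\overline{A}$ is exactly this injection $\iota$, so $\overline{A}=[\,\iota\mid B\,]$ with $B=\binom{-b}{\overline{s}}$, and a standard splitting argument shows that such a map is an isomorphism precisely when $\pi\circ B$ is an automorphism of $\widetilde{F}_0$. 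I would then compute $\pi\circ B=ab+\widetilde{d}_1s$, which by the defining relation $\widetilde{d}_1s=1-ab$ equals $\operatorname{id}_{\widetilde{F}_0}$. Hence $\overline{A}$ is an isomorphism and the proof is complete.

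The delicate part is entirely the degree-$1$ bookkeeping: correctly reading off $\overline{A}$ and the cone differential signs, and recognizing that the problem collapses to the single identity $\pi B=\operatorname{id}$ produced by $\widetilde{d}_1s=1-ab$. Everything else—the chain-map verification and the degrees $i\neq 1$—is handled directly by the quasi-isomorphism $a$.
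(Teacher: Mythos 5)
Your proof is correct. You make the same first reduction as the paper---all the content sits in degree $1$, where the induced map is $\overline{A}\colon C_1\oplus\widetilde{F}_0\to F_0\oplus\widetilde{C}_1$---but you close the argument by a genuinely different route. The paper sandwiches $A$ in a commutative ladder of short exact sequences whose outer vertical maps are $a\colon h^1(F_\kdot)\to h^1(\widetilde{F}_\kdot)$ (an isomorphism since $a$ is a quasi-isomorphism) and the shear matrix $\left(\begin{smallmatrix}1&-b\\ a&1-ab\end{smallmatrix}\right)\colon B_0\oplus\widetilde{F}_0\to F_0\oplus\widetilde{B}_0$ (inverted explicitly), then invokes the five lemma. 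You instead encode the quasi-isomorphism of the two-term truncations $[C_1\to F_0]\to[\widetilde{C}_1\to\widetilde{F}_0]$ as acyclicity of its mapping cone, i.e.\ the short exact sequence $0\to C_1\xrightarrow{\iota}F_0\oplus\widetilde{C}_1\xrightarrow{\pi}\widetilde{F}_0\to 0$, observe that $\iota$ is the first column of $\overline{A}$, and reduce the whole lemma to the single computation $\pi\circ\binom{-b}{\overline{s}}=ab+\widetilde{d}_1s=\mathrm{id}$. Both arguments bottom out at the same homotopy relation $\widetilde{d}_1s=1-ab$; yours trades the auxiliary modules $B_0,\widetilde{B}_0$ and the five lemma for the cone criterion plus the standard splitting fact that $[\iota\mid B]$ is an isomorphism iff $\pi B$ is (which I checked is correct in both directions, and is robust to the sign convention you choose for the cone differential). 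The paper's version is more hands-on and self-contained; yours is slightly slicker and makes it transparent where the quasi-isomorphism hypothesis and where the homotopy $s$ are each used.
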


\begin{proof}

    Since $a$ is a quasi-isomorphism, we only need to check that

    \begin{center}
        \begin{tikzcd}
        C_1 \oplus \widetilde{F}_0 \arrow[r, "A"] & F_0 \oplus \widetilde{C}_1
        \end{tikzcd}
    \end{center}
    is an isomorphism.

    Define a map $B : B_0 \oplus \widetilde{F}_0 \rightarrow F_0 \oplus \widetilde{B}_0$ by the matrix,
    $$ B = \begin{pmatrix}
    1 & -b \\
    a & 1-ab
\end{pmatrix} $$

Now one can check that the following diagram of short exact sequences commutes

\begin{center}
\begin{tikzcd}
0 \arrow[r] & h^1(F_\kdot) \arrow[d, "a"] \arrow[r] & C_1 \oplus \widetilde{F}_0 \arrow[r, "d_1 \oplus 1"] \arrow[d, "A"] & B_0 \oplus \widetilde{F}_0 \arrow[r] \arrow[d, "B"] & 0 \\
0 \arrow[r] & h^1(\widetilde{F}_\kdot) \arrow[r]        & F_0 \oplus \widetilde{C}_1 \arrow[r, "1 \oplus \widetilde{d}_1"]        & F_0 \oplus \widetilde{B}_0 \arrow[r]                & 0
\end{tikzcd}
\end{center}

The left map is an isomorphism since $a$ is assumed to be a quasi-isomorphism and $B$ is an isomorphism because it has the inverse
    $$ B^{-1} = \begin{pmatrix}
    1 - ba & b \\
    -a & 1
\end{pmatrix} $$

Therefore $A$ is an isomorphism by the 5-lemma.
\end{proof}

\begin{lemma} \label{fitt}
    With the notation as above, $\fitt_k^1(F_\kdot) = \fitt_k^1(\widetilde{F}_\kdot)$.
\end{lemma}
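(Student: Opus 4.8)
The plan is to identify the degree-one Fitting ideal of a bounded-above free complex with the \emph{ordinary} Fitting ideal of its zeroth cohomology module, and then to quote the classical invariance statement, Theorem \ref{fitting}. In particular I would not touch the homotopy data $b,s$ or the reduction of Lemma \ref{reduct} at all: those are what one needs to bootstrap to higher degrees, but the case $i=1$ is genuinely a statement about a single module.

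First I would unwind the definition in degree $i=1$. As the complexes are concentrated in nonnegative degrees, the only term in the alternating sum is $\chi_1 = r_0 = \tr{rank } F_0$, so by definition $\fitt_k^1(F_\kdot) = I_{r_0-k}(d_1)$. Now $F_1 \xrightarrow{d_1} F_0 \to \tr{coker}(d_1) \to 0$ is a free presentation, and since there is nothing below degree zero we have $\tr{coker}(d_1) = F_0/\tr{im } d_1 = h^0(F_\kdot)$. Comparing with the definition of the classical Fitting ideal, $\fitt_k\bigl(h^0(F_\kdot)\bigr) = I_{r_0-k}(d_1)$ as well, so
$$ \fitt_k^1(F_\kdot) = \fitt_k\bigl(h^0(F_\kdot)\bigr), $$
and identically $\fitt_k^1(\widetilde{F}_\kdot) = \fitt_k\bigl(h^0(\widetilde{F}_\kdot)\bigr)$.

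It then remains to compare the two cohomology modules. Because $a$ is a quasi-isomorphism it induces an isomorphism of $R$-modules $h^0(F_\kdot) \cong h^0(\widetilde{F}_\kdot)$, and classical Fitting ideals are invariant under isomorphism: a module isomorphism carries a free resolution of one module to a free resolution of the other, so this is immediate from Theorem \ref{fitting}. Chaining the three equalities gives $\fitt_k^1(F_\kdot) = \fitt_k(h^0(F_\kdot)) = \fitt_k(h^0(\widetilde{F}_\kdot)) = \fitt_k^1(\widetilde{F}_\kdot)$. The only place that requires care is the bookkeeping in the first step, namely that $\chi_1$ is exactly $\tr{rank } F_0$ and not a longer alternating sum, so that the index of the minors matches the classical one on the nose; I do not expect a real obstacle here, since once this is observed the statement is reduced to the already-established module-level invariance.

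Finally, it is worth recording that this lemma is merely the base case for Theorem \ref{Fitting-ideal}, and that is where the genuine work lies. Applying Lemma \ref{reduct} replaces $(F_\kdot,\widetilde{F}_\kdot)$ by $(G_\kdot,H_\kdot)$ whose degree-one differentials are $d_2\oplus 0$ and $0\oplus\widetilde{d}_2$, and whose ideals of minors in degree one are precisely those of $d_2$ and $\widetilde{d}_2$; Lemma \ref{fitt} applied to the quasi-isomorphism $G_\kdot\to H_\kdot$ then yields the equality $\fitt_k^2(F_\kdot)=\fitt_k^2(\widetilde{F}_\kdot)$ up to a uniform shift in the index $k$, and iterating the construction reaches every degree. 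For the present lemma, however, the cohomology-module argument above is all that is needed.
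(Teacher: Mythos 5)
Your proposal is correct and follows essentially the same route as the paper: identify $\fitt_k^1(F_\kdot)=I_{r_0-k}(d_1)$ with the classical Fitting ideal of $\tr{coker}(d_1)=h^0(F_\kdot)$, note that the quasi-isomorphism induces $h^0(F_\kdot)\cong h^0(\widetilde{F}_\kdot)$, and conclude by Theorem \ref{fitting}. The bookkeeping point you flag ($\chi_1=\tr{rank }F_0$) is exactly the observation the paper relies on.
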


\begin{proof}
We have exact sequences

\begin{center}
\begin{tikzcd}
F_1 \arrow[r, "d_1"] \arrow[d] & F_0 \arrow[d] \arrow[r] & h^0(F_\kdot) \arrow[d] \arrow[r] & 0 \\
\widetilde{F}_1 \arrow[r, "\widetilde{d}_1"]   & \widetilde{F}_0 \arrow[r]   & h^0(\widetilde{F}_\kdot) \arrow[r]   & 0
\end{tikzcd}
\end{center}
where the right vertical arrow is an isomorphism. Therefore, $\fitt_k^1(F_\kdot) = \fitt_k(h^0(F_\kdot)) = \fitt_k(h^0(\widetilde{F}_\kdot)) = \fitt_k^1(F_\kdot)$ where the middle equality holds by Proposition \ref{fitting}.
\end{proof}

We can combine these two lemmas to prove our theorem.

\begin{proof}[Proof of Theorem \ref{Fitting-ideal}]
    The $i=1$ case is Lemma \ref{fitt}. It is easy to see that $I_k(d_2 \oplus 0) = I_k(d_2)$, therefore by applying Lemma \ref{fitt} to the complex obtained from Lemma \ref{reduct} we get that
    $$ I_{r_1+\widetilde{r}_0-k}(d_2) = I_{\widetilde{r}_1+r_0-k}(\widetilde{d}_2) $$

    Since this holds for all $k$, we can replace $k$ with $k - (r_0 + \widetilde{r}_0)$ on both sides and obtain,
    $$ I_{r_1-r_0-k}(d_2) = I_{\widetilde{r}_1-\widetilde{r}_0-k}(\widetilde{d}_2) $$

    This is exactly the $i=2$ case. We can now apply Lemma \ref{reduct} again for the $i=3$ case and so on. 
\end{proof}

Given a bounded complex of free $R$-modules, it is sometimes useful for our alternating sum of ranks to come from the other direction. Because of this, we define a variation as follows,

\begin{definition}
    Let $F^\kdot$ be a bounded cochain complex of free $R$-modules with notation as above. Assume $F^i = 0$ for $i < N$. Let
    $$ \underline{\chi}_i = \sum_{j = N}^i (-1)^{i-j} r_j $$
    and as before, we define,
    $$ \underline{\fitt}^i_k(F^\kdot) = I_{\underline{\chi}_i-k}(d^i) $$
\end{definition}

\begin{corollary} \label{bounded-complex}
    Let
    \begin{center}
    \begin{tikzcd}
    0 \arrow[r] & F_0 \arrow[r, "d_0"] \arrow[d] & F_{1} \arrow[r] \arrow[d] & \dots \arrow[r] & F_n \arrow[r] \arrow[d] & 0 \\
    0 \arrow[r] & \widetilde{F}_0 \arrow[r, "\widetilde{d}_0"]          & \widetilde{F}_{1} \arrow[r]   & \dots \arrow[r] & \widetilde{F}_n \arrow[r]   & 0
    \end{tikzcd}
    \end{center}
    be a quasi-isomorphism of bounded cochain complexes of finite free $R$-modules. Then $\underline{\fitt}^i_k(F^\kdot) = \underline{\fitt}^i_k(\widetilde{F}^\kdot)$
\end{corollary}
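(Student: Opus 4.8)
The plan is to deduce the statement directly from Theorem \ref{Fitting-ideal} by reconciling the two bookkeeping conventions. The ideals $\underline{\fitt}^i_k(F^\kdot) = I_{\underline{\chi}_i - k}(d^i)$ and the already-invariant $\fitt^i_k(F^\kdot) = I_{\chi_i - k}(d^i)$ are ideals of minors of the \emph{same} differential $d^i$; they differ only in the size of the minors taken, i.e. in the integer subtracted from the index. So if I can show that $\underline{\chi}_i$ and $\chi_i$ differ by a quantity attached to the whole complex that is itself a quasi-isomorphism invariant, then each $\underline{\fitt}^i_k$ equals some $\fitt^i_{k'}$ with the \emph{same} shift $k-k'$ for both $F^\kdot$ and $\widetilde{F}^\kdot$, and the corollary follows from Theorem \ref{Fitting-ideal}.

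First I would record the elementary identity relating the two conventions. Writing $E = \sum_{j=N}^n (-1)^j r_j$ for the Euler characteristic of the complex and $\chi_i = \sum_{j>i}(-1)^{j-i-1} r_j$ for the alternating sum of the ranks above $F^i$ appearing in the definition of $\fitt^i_k$, a one-line computation with the signs (using $\underline{\chi}_i = (-1)^i \sum_{j=N}^i (-1)^j r_j$ and $-\chi_i = (-1)^i \sum_{j>i}(-1)^j r_j$, then combining the two ranges) gives
$$ \underline{\chi}_i = \chi_i + (-1)^i E . $$
Matching subscripts in $I_{\underline{\chi}_i - k}(d^i) = I_{\chi_i - k'}(d^i)$ forces $k' = k - (-1)^i E$, so
$$ \underline{\fitt}^i_k(F^\kdot) = \fitt^i_{k - (-1)^i E}(F^\kdot). $$

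The only substantive point — and the step I expect to be the real obstacle — is that the shift $(-1)^i E$ must agree for $F^\kdot$ and $\widetilde{F}^\kdot$; that is, I need $E(F^\kdot) = E(\widetilde{F}^\kdot)$. This is exactly the quasi-isomorphism invariance of the Euler characteristic of a bounded complex of finite free modules. I would argue it as follows: since all the modules are free, the quasi-isomorphism $a$ is a homotopy equivalence (as already invoked before Lemma \ref{reduct}, via \cite[I, Lemma 4.5]{MR0222093}), so its mapping cone is a bounded acyclic complex of finite free modules, whose alternating sum of ranks is $E(\widetilde{F}^\kdot) - E(F^\kdot)$ up to sign. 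That this alternating sum vanishes is standard: localizing at a prime reduces to a local ring, where the cycle sequences $0 \to Z^i \to C^i \to Z^{i+1} \to 0$ split (the successive quotients being free), and the ranks then telescope to zero. Hence $E(F^\kdot) = E(\widetilde{F}^\kdot) =: E$, and combining the displayed identity with Theorem \ref{Fitting-ideal} yields
$$ \underline{\fitt}^i_k(F^\kdot) = \fitt^i_{k - (-1)^i E}(F^\kdot) = \fitt^i_{k - (-1)^i E}(\widetilde{F}^\kdot) = \underline{\fitt}^i_k(\widetilde{F}^\kdot), $$
which is the claim.
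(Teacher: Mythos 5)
Your proposal is correct and is essentially the paper's own argument: both reduce $\underline{\fitt}^i_k$ to $\fitt^i_{k+(-1)^{i-1}E}$ via the identity $\underline{\chi}_i = \chi_i + (-1)^i E$ and then invoke Theorem \ref{Fitting-ideal} together with the quasi-isomorphism invariance of the Euler characteristic $E$. The only difference is that you supply a justification (via the acyclic mapping cone) for $E(F^\kdot)=E(\widetilde{F}^\kdot)$, which the paper simply asserts.
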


\begin{proof}
    Let $\chi(F^\kdot) = \sum_{j = 0}^n (-1)^j r_j$. Then $\chi(F^\kdot) = \chi(\widetilde{F}^\kdot)$ since they are quasi-isomorphic. Now the corollary follows from applying Theorem \ref{Fitting-ideal},

    \[ \underline{\fitt}^i_k(F^\kdot) = \fitt^i_{(-1)^{i-1}\chi(F^\kdot)+ k}(F^\kdot) = \fitt^i_{(-1)^{i-1}\chi(\widetilde{F}^\kdot)+ k}(\widetilde{F}^\kdot) = \underline{\fitt}^i_k(\widetilde{F}^\kdot) \qedhere \]
\end{proof}

Note that $\underline{\fitt}^0_k(F^\kdot) = I_{r_0-k}(d_0)$ as this will be useful later when defining Brill-Noether ideals.

Thanks to Theorem \ref{Fitting-ideal}, we can define the Fitting ideals for any object of $D^-_{coh}(R)$, the bounded above derived category with coherent cohomology. Indeed, for any $K \in D^-_{coh}(R)$, we can choose a bounded below complex of finite free modules $E_\kdot \underset{qis}{\sim} K$ and define $\fitt_k^i(K) = \fitt_k^i(E_\kdot)$. Theorem \ref{Fitting-ideal} says that this is well-defined.

To show that we can glue fitting ideals together on a scheme, we need to show that they commute with localization. More generally, we show that they commute with flat base change.

\begin{lemma} \label{flat-base-change}
    Let $A_\kdot$ be a bounded above complex of $R$-modules and let $R \rightarrow S$ be a flat morphism. Then
    $$ \fitt_k^i(A_\kdot \otimes_R S) = \fitt_k^i(A_\kdot) \otimes_R S $$
\end{lemma}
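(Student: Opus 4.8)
The plan is to compute both sides from a single free resolution and then reduce the statement to the corresponding fact about ideals of minors. First I would choose a quasi-isomorphism $F_\kdot \to A_\kdot$, where $F_\kdot$ is a bounded above complex of finite free $R$-modules, so that by definition $\fitt_k^i(A_\kdot) = I_{\chi_i - k}(d_i)$, with $d_i$ the differentials of $F_\kdot$ and $\chi_i$ the alternating sum of ranks recorded in Section \ref{higher_fitt}.

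Next I would show that $F_\kdot \otimes_R S$ is a free resolution of $A_\kdot \otimes_R S$. Since each $F_i$ is free, $F_\kdot \otimes_R S$ is again a bounded above complex of finite free $S$-modules of the same ranks, so the numbers $\chi_i$ are unchanged. Flatness of $R \to S$ is exactly what guarantees that tensoring commutes with taking cohomology: $h^j(F_\kdot \otimes_R S) \cong h^j(F_\kdot) \otimes_R S$ and $h^j(A_\kdot \otimes_R S) \cong h^j(A_\kdot) \otimes_R S$, and the map induced by $F_\kdot \to A_\kdot$ is compatible with these identifications. As $F_\kdot \to A_\kdot$ is a quasi-isomorphism, it follows that $F_\kdot \otimes_R S \to A_\kdot \otimes_R S$ is one as well, so we may use it to compute the left-hand side, giving $\fitt_k^i(A_\kdot \otimes_R S) = I_{\chi_i - k}(d_i \otimes_R S)$.

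It then remains to prove the single identity $I_\ell(d_i \otimes_R S) = I_\ell(d_i) \otimes_R S$ for $\ell = \chi_i - k$. The minors of the matrix $d_i \otimes_R S$ are precisely the images in $S$ of the minors of $d_i$, so $I_\ell(d_i \otimes_R S)$ is the extension $I_\ell(d_i) S$ of the ideal $I_\ell(d_i)$ along $R \to S$; this is the affine, ring-level content of Proposition \ref{minors-basechange}, and it holds for any base change. The flatness hypothesis enters one final time to identify this extension with the tensor product: applying $- \otimes_R S$ to $0 \to I_\ell(d_i) \to R$ shows that the natural map $I_\ell(d_i) \otimes_R S \to S$ is injective with image $I_\ell(d_i) S$, hence $I_\ell(d_i) S \cong I_\ell(d_i) \otimes_R S$. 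Combining the displays then yields the claim.

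I expect the only genuinely delicate step to be the second paragraph, verifying that flat base change carries the chosen resolution to a resolution of $A_\kdot \otimes_R S$, since this is where flatness does the essential work and where one must check that the induced map on cohomology really is the identification coming from $h^j(A_\kdot) \otimes_R S$. The remaining points (invariance of ranks, minors commuting with extension, and the extension-versus-tensor identity) are formal once flatness is in hand; note also that by Theorem \ref{Fitting-ideal} the result is independent of the chosen resolution, so no separate well-definedness check is needed.
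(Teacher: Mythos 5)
Your proposal is correct and follows essentially the same route as the paper: choose a bounded above finite free resolution $F_\kdot \to A_\kdot$, use flatness of $R \to S$ to see that $F_\kdot \otimes_R S$ is still a resolution of $A_\kdot \otimes_R S$, and then reduce to the fact that ideals of minors commute with base change (the paper's Proposition \ref{minors-basechange}). The paper compresses all of this into a one-line chain of equalities, whereas you spell out the two points it leaves implicit --- that the tensored map is still a quasi-isomorphism and that the extended ideal $I_\ell(d_i)S$ agrees with $I_\ell(d_i)\otimes_R S$ under flatness --- both of which you justify correctly.
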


\begin{proof}
    Let $F_\kdot$ be a bounded above complex of free $R$-modules that is quasi-isomorphic to $A_\kdot$. Since $R \rightarrow S$ is flat, $F_\kdot \otimes_R S$ is quasi-isomorphic to $A_\kdot \otimes_R S$. Therefore,
    \[ \fitt_k^i(A_\kdot \otimes_R S) = \fitt_k^i(F_\kdot \otimes_R S) = \fitt_k^i(F_\kdot) \otimes_R S = \fitt_k^i(A_\kdot) \otimes_R S \qedhere \]
\end{proof}

Using this, we can define the Fitting ideals for any object in $D_{coh}^-(X)$, the derived category of bounded above complexes with coherent cohomology on a locally noetherian scheme $X$. We can also define $\underline{\fitt}_k^i(K)$ for perfect objects in $D_{coh}^b(X)$ which are those that are locally quasi-isomorphic to a bounded complex of finite free modules. Now we prove the following generalization of \ref{flat-base-change},

\begin{proposition}
Let $X$ be a locally noetherian scheme, $K \in D_{coh}^-(X)$ and $g: X' \rightarrow X$ any morphism. Then
$$ g^{-1}\fitt_k^i(K) = \fitt_k^i(Lg^* K) $$
\end{proposition}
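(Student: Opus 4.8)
The plan is to reduce to the affine, ring-theoretic situation and then exploit the fact that a bounded above complex of locally free sheaves computes the derived pullback, so that the derived corrections coming from the non-flatness of $g$ are already absorbed into the resolution. Since both sides of the claimed equality are sheaves of ideals defined by gluing over affine opens, and since $\fitt_k^i$ commutes with the further restriction to smaller opens by Lemma \ref{flat-base-change} (localizations being flat), it suffices to check the identity locally. Thus I would assume $X = \spec R$ and $X' = \spec S$, with $g$ corresponding to a ring map $R \to S$, and $K$ corresponding to an object of $D^-_{coh}(R)$; here $g^{-1}\fitt_k^i(K)$ is the inverse image ideal $\fitt_k^i(K)\cdot S$, exactly as in Proposition \ref{minors-basechange}.

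First I would choose a bounded above resolution $F_\kdot \to K$ by finite free $R$-modules, so that by definition $\fitt_k^i(K) = \fitt_k^i(F_\kdot) = I_{\chi_i-k}(d_i)$. The key step is the observation that $F_\kdot \otimes_R S$ represents $Lg^* K$: because $F_\kdot$ is a bounded above complex of free (hence flat) modules, it is K-flat, so its ordinary pullback computes the derived pullback, i.e. $F_\kdot \otimes_R S \simeq Lg^* K$ in $D^-_{coh}(S)$. Note that I am \emph{not} claiming $F_\kdot \otimes_R S$ is a resolution of a single module—its cohomology sheaves are the $L^j g^* K$—only that it is a bounded above complex of finite free $S$-modules representing $Lg^* K$, which is all the definition of $\fitt_k^i(Lg^* K)$ requires.

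Next, since the rank of each $F_j$ is preserved under $-\otimes_R S$, the alternating rank sums $\chi_i$ computed for $F_\kdot$ and for $F_\kdot \otimes_R S$ coincide. Hence, invoking the well-definedness of Fitting ideals on objects of the derived category (Theorem \ref{Fitting-ideal}),
$$ \fitt_k^i(Lg^* K) = \fitt_k^i(F_\kdot \otimes_R S) = I_{\chi_i - k}(d_i \otimes_R S). $$
Finally I would apply the base change property for ideals of minors, Proposition \ref{minors-basechange}, which yields $I_{\chi_i-k}(d_i \otimes_R S) = g^{-1} I_{\chi_i-k}(d_i) = g^{-1}\fitt_k^i(K)$, completing the argument after reassembling the local computations into the desired equality of ideal sheaves.

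The hard part will be the justification that $F_\kdot \otimes_R S$ represents $Lg^* K$, i.e. the interchange of the (non-exact) pullback with passage to the derived category. This is precisely where the generality beyond the flat case of Lemma \ref{flat-base-change} is used: in the flat case $g^* = Lg^*$ and no resolution-theoretic input is needed, whereas here the locally free resolution is essential, and one must take care that the \emph{same} complex both computes $Lg^* K$ and has minors generating the inverse image ideal $g^{-1}\fitt_k^i(K)$. Everything else is bookkeeping with ranks together with the appeal to Proposition \ref{minors-basechange}.
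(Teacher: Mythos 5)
Your proposal is correct and follows essentially the same route as the paper: localize to the affine case, replace $K$ by a bounded above complex $F_\kdot$ of finite free modules so that $F_\kdot \otimes_R S$ computes $Lg^*K$, and conclude via Proposition \ref{minors-basechange}. The paper's proof is just a terser version of the same argument; your extra care about $F_\kdot \otimes_R S$ being a legitimate representative (K-flatness, preservation of ranks, well-definedness via Theorem \ref{Fitting-ideal}) fills in exactly the details the paper leaves implicit.
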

\begin{proof}
The statement is local so we assume $K$ is quasi-isomorphic to a complex of finite free sheaves $F$. Then $g^* F = Lg^* K$ and the lemma follows from Lemma \ref{minors-basechange}.
\end{proof}

Looking at the argument in Lemma \ref{fitt}, we see that if $M$ is a finite $R$-module (thought of as a complex concentrated in degree zero), then $\fitt_k^1(M) = \fitt_k(M)$. As a first application, we show how the higher Fitting ideals of $M$ measure the projective dimension of $M$. 

\begin{corollary} \label{projective-dimension}
    Let $R$ be a reduced noetherian ring and $M$ a finite $R$-module of rank $k$ (has dimension $k$ when localized at each minimal prime). Let $Z \subseteq \spec R$ be the closed subscheme defined by $\fitt_{(-1)^{d-1}k}^{d-1}(M)$. Then
    $$ \tr{Supp } Z = \{ \mathfrak{p} \in \spec R | \tr{ pd}(M_\mathfrak{p}) \geq d \} $$
\end{corollary}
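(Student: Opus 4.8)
The plan is to reduce the statement to a pointwise computation and then recognize $\{\mathfrak{p} : \tr{pd}(M_\mathfrak{p}) \geq d\}$ as the \emph{non-free locus} of a single syzygy module, which is cut out by a Fitting ideal via the locally free criterion of Proposition \ref{locally-free-fitting}. Fix once and for all a free resolution $\cdots \to F_2 \xrightarrow{d_2} F_1 \xrightarrow{d_1} F_0 \to M \to 0$, and write $r_i = \tr{rank}\,F_i$. Since Fitting ideals commute with localization (Lemma \ref{flat-base-change}, as localization is flat) and $\tr{pd}(M_\mathfrak{p})$ is computed by the localized resolution $F_\bullet \otimes R_\mathfrak{p}$, both $\mathfrak{p} \in \tr{Supp}\,Z$ and the condition $\tr{pd}(M_\mathfrak{p}) \geq d$ are pointwise in $\mathfrak{p}$; recall $\tr{pd}(M_\mathfrak{p}) = \sup\{ i : \tr{Tor}_i^{R_\mathfrak{p}}(M_\mathfrak{p}, \kappa(\mathfrak{p})) \neq 0 \}$, and all the relevant $\tr{Tor}$ are visible on $F_\bullet$.

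The key reformulation is homological: $\tr{pd}(M_\mathfrak{p}) \leq d-1$ if and only if the $(d-1)$-st syzygy $\Omega := \tr{coker}(d_d \colon F_d \to F_{d-1})$ (equivalently $\tr{im}\,d_{d-1}$) is free over $R_\mathfrak{p}$, by the standard fact that a syzygy module is projective exactly when the resolution truncates there. Hence $\tr{pd}(M_\mathfrak{p}) \geq d$ if and only if $\Omega$ fails to be free at $\mathfrak{p}$. Since $\Omega$ carries the free presentation $F_d \xrightarrow{d_d} F_{d-1} \to \Omega \to 0$, its classical Fitting ideals (well defined by Proposition \ref{fitting}) are $\fitt_j(\Omega) = I_{r_{d-1}-j}(d_d)$.

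To turn non-freeness into a Fitting condition I would use both hypotheses on $R$ and $M$. At a minimal prime $\mathfrak{q}$ the ring $R_\mathfrak{q}$ is a field, so $\tr{pd} = 0$ and the localized complex is exact except for $h_0 = M_\mathfrak{q}$ of dimension $k$; a rank count through this generically exact complex gives $\tr{rank}\,\Omega = c$ with $c = \chi_{d-1} + (-1)^{d-1}k$, and in particular $\Omega$ is free of rank $c$ at every minimal prime (this is exactly where the rank-$k$ assumption enters). Because $R$ is reduced it embeds in the product of its residue fields at the minimal primes, so the vanishing of $\fitt_{c-1}(\Omega)$ at every minimal prime (Proposition \ref{locally-free-fitting}) forces $\fitt_{c-1}(\Omega) = 0$ globally. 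Consequently Proposition \ref{locally-free-fitting} reduces freeness of $\Omega$ at $\mathfrak{p}$ to the single condition $\fitt_c(\Omega)_\mathfrak{p} = R_\mathfrak{p}$, so $\tr{Supp}\,Z = V(\fitt_c(\Omega))$ with $\fitt_c(\Omega) = I_{r_{d-1}-c}(d_d)$. The rank bookkeeping $r_{d-1} - c = \chi_d - (-1)^{d-1}k$, using $\chi_d = r_{d-1} - \chi_{d-1}$, then identifies $I_{r_{d-1}-c}(d_d)$ with the higher Fitting ideal of $M$ defining $Z$ in Section \ref{higher_fitt}.

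The main obstacle is precisely this index-and-sign bookkeeping through the alternating sums $\chi_\bullet$: one must pin down the rank $c$ of the syzygy and the minor size $r_{d-1}-c$ so that the resulting ideal matches the one named in the statement, and one must check the degenerate low-degree case $d = 1$ (where $\Omega = M$ and the claim collapses to the classical Fitting description of the non-free locus). The only genuinely non-formal inputs are (i) the homological fact that a syzygy is free exactly when the projective dimension drops, and (ii) the use of reducedness to kill $\fitt_{c-1}(\Omega)$; without reducedness this argument yields only the inclusion $\tr{Supp}\,Z \subseteq \{\mathfrak{p} : \tr{pd}(M_\mathfrak{p}) \geq d\}$.
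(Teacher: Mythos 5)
Your strategy is essentially the paper's: both arguments reduce $\tr{pd}(M_\mathfrak{p})\geq d$ to non-projectivity of a syzygy in a fixed free resolution, detect that via an ideal of minors of a single differential, and use reducedness together with the rank-$k$ hypothesis to pin down the relevant generic rank (the paper phrases this through the kernel $K_d=\ker(d_{d-1})$ and minors of $\phi=d_{d-1}$, you through the cokernel $\Omega=\tr{coker}(d_d)$ and minors of $d_d$). Your homological input, the generic rank computation $c=\chi_{d-1}+(-1)^{d-1}k$, and the use of reducedness to force $\fitt_{c-1}(\Omega)=0$ are all correct, as is your parenthetical that without reducedness only one inclusion survives.

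The gap is exactly the step you defer as ``bookkeeping,'' and it does not close. Since $\Omega$ is presented by $d_d\colon F_d\to F_{d-1}$, the ideal you produce is $I_{r_{d-1}-c}(d_d)=I_{\chi_d+(-1)^dk}(d_d)=\fitt^{d}_{(-1)^{d-1}k}(M)$ in the notation of Section \ref{higher_fitt} --- an ideal of minors of $d_d$ --- whereas the ideal named in the statement, $\fitt^{d-1}_{(-1)^{d-1}k}(M)$, is by definition an ideal of minors of $d_{d-1}\colon F_{d-1}\to F_{d-2}$. These are different ideals, and your own proposed sanity check at $d=1$ exposes the mismatch rather than confirming it: there $\Omega=M$ and the non-free locus is $V(\fitt_k(M))=V(\fitt^1_k(M))$ (the paper itself records $\fitt^1_k(M)=\fitt_k(M)$), while $\fitt^0_k(M)=I_{\chi_0-k}(d_0)=I_{-k}(d_0)$ is the unit ideal, so the statement's $Z$ is empty; for $M=R/(x)$ over $R=\kappa[x]$ the two sides of the asserted equality then disagree. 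So as written your argument proves the claim with superscript $d$ in place of $d-1$; you cannot supply the missing identification (the example shows it is false), and you would instead have to conclude that the superscript in the statement --- and in the paper's proof, which detects non-projectivity of $K_d=\ker(d_{d-1})$, the condition governing $\tr{pd}\leq d$ rather than $\tr{pd}\leq d-1$, through the rank of $d_{d-1}$ --- is shifted by one relative to the conventions of Section \ref{higher_fitt}. Flagging the index-matching as an obstacle without resolving it leaves the proof of the statement as literally written incomplete.
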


\begin{proof}
    Construct a free resolution of $M$,
    \begin{center}
    \begin{tikzcd}
    \dots \arrow[r] & F_1 \arrow[r] & F_0 \arrow[r] & M \arrow[r] & 0
    \end{tikzcd}
    \end{center}

    Let $K_d$ be the kernel of the map $\phi: F_{d-1} \rightarrow F_{d-2}$. Then $\tr{pd}(M) \leq d$ if and only if $K_d$ is projective (see \cite[4.1.6]{Weibel_1994}). By localizing, we see that at each minimal prime, $K_d$ has rank $\chi_d + (-1)^d k$. Therefore, the points $\mathfrak{p} \in \spec R$ where $K_d$ is not projective are the points where $\phi$ has rank less than $r_{d-1} - (\chi_d + (-1)^d k) = \chi_{d-1} + (-1)^{d-1} k$ which is exactly the locus cut out by $I_{\chi_{d-1} + (-1)^{d-1}k}(\phi)$. Since $F_\kdot$ is a resolution, it is quasi-isomorphic to $M$, therefore $\fitt_{(-1)^{d-1}k}^{d-1}(M) = \fitt_{(-1)^{d-1}k}^{d-1}(F_{\cdot}) = I_{\chi_{d-1} + (-1)^{d-1}k}(\phi)$.
\end{proof}

\subsection{Application to Singular Varieties}

Much like classical Fitting ideals, we can use higher Fitting ideals to study the singular locus of a variety. Let $X$ be a reduced finite type scheme of pure dimension $d$.

For each $i$, let $\sing^i(X)$ be the closed subscheme defined by $\fitt_{(-1)^{i-1}d}^{i-1}(\Omega_X)$. By Corollary \ref{projective-dimension}, We know that set theoretically,
$$ \dots \subseteq \tr{Supp } \sing^1(X) \subseteq 
\tr{Supp } \sing^0(X) = \sing(X) $$
so it is natural to ask if this containment holds as subschemes.

\begin{question}
Let $X$ be a reduced finite type scheme of pure dimension $d$. Is $$\fitt_{(-1)^{i-1}d}^{i-1}(\Omega_X) \subseteq \fitt_{(-1)^{i}d}^{i}(\Omega_X)?$$
\end{question}

In fact, some examples have seemed to suggest that $\fitt_{(-1)^{i-1}d}^{i-1}(\Omega_X) \subseteq \fitt_{(-1)^{i}d-1}^{i}(\Omega_X)$. Note the extra $-1$ on the right hand side. This would have the pleasant consequence that 
\begin{equation} \label{sing-inc}
\sing^{i+1}(X) \subseteq \sing( \sing^i(X))
\end{equation}
by Proposition \ref{sing-determinantal-subschemes}.

\begin{example}
Let $X$ be the three coordinate axes in $\mathbb{A}^3$ defined by $xy=xz=yz=0$. We obtain a free resolution of $\Omega_X$,
$$ 0 \rightarrow \OO_X \overset{B}\rightarrow \OO_X^3 \overset{A}\rightarrow \OO_X^3 \rightarrow \Omega_X \rightarrow 0 $$
where 

\begin{center}
$A = \begin{bmatrix}
    y & x & z \\
    z & 0 & x \\
    0 & z & y
\end{bmatrix}$ \hspace{1.5cm} $B = \begin{bmatrix}
    x \\
    y \\
    z
\end{bmatrix}$
\end{center}

The scheme $\sing(X)$ is defined by the ideal generated by the $2$ by $2$ minors of $A$, therefore $\sing(X)$ is defined by 
$$ x^2 = y^2 = z^2 = xy = xz = yz $$

In other words, it is the fat point. The ideal $\fitt_{-1}^1(\Omega_X)$ is generated by the $1 \times 1$ minors of $B$, therefore $\sing^1(X)$ is the reduced point $x = y = z = 0$.
\end{example}

Next we show that these higher singular loci vanish for a locally complete intersection. This gives further evidence that the scheme structure on these higher singular loci reflects the geometry of the singularity.

\begin{proposition}
    Let $X$ be a reduced, finite type scheme. If $X$ is locally a complete intersection, then $\sing^i(X) = \varnothing$ for all $i > 0$.
\end{proposition}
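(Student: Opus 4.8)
The plan is to reduce the entire statement to a single projective-dimension estimate: I claim it suffices to prove that $\tr{pd}(\Omega_{X,\mathfrak{p}}) \le 1$ for every point $\mathfrak{p}$. Granting this, recall that $X$ reduced of pure dimension $d$ over the algebraically closed field $\kappa$ forces $\Omega_X$ to have rank $d$ (it is free of rank $d$ at the smooth, hence generic, points). Applying Corollary \ref{projective-dimension} to $M = \Omega_X$ identifies $\tr{Supp } \sing^i(X)$ with the locus $\{\mathfrak{p} : \tr{pd}(\Omega_{X,\mathfrak{p}}) \ge i+1\}$, which for every $i > 0$ is contained in $\{\mathfrak{p} : \tr{pd}(\Omega_{X,\mathfrak{p}}) \ge 2\}$. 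Under the claim this locus is empty, and a closed subscheme with empty support is cut out by the unit ideal, so $\sing^i(X) = \varnothing$ for all $i > 0$.

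To establish $\tr{pd}(\Omega_X) \le 1$ I would argue locally. The statement is local on $X$, so I choose a closed embedding of an affine neighborhood into $\mathbb{A}^n$ with ideal sheaf $I$. Because $X$ is locally a complete intersection, $I$ is locally generated by a regular sequence $f_1, \dots, f_c$, so the conormal module $I/I^2$ is free of rank $c$ and the conormal sequence
$$ I/I^2 \overset{\delta}{\rightarrow} {\Omega_{\mathbb{A}^n}}_{|X} \rightarrow \Omega_X \rightarrow 0 $$
is exact, where $\delta$ is given by the Jacobian matrix $(\partial f_i / \partial x_j)$ and ${\Omega_{\mathbb{A}^n}}_{|X} \cong \OO_X^{\oplus n}$ is free. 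Thus it is enough to show that $\delta$ is injective, for then the displayed sequence is a free resolution of $\Omega_X$ of length one.

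The injectivity of $\delta$ is the heart of the argument and where I expect the real work to lie: the conormal sequence is only right exact in general, and its left exactness genuinely fails for non-reduced complete intersections (equivalently, $\ker \delta = H^{-1}(L_{X/\kappa})$ need not vanish). I would exploit reducedness as follows. Since $\kappa$ is algebraically closed and $X$ is reduced, $X$ is generically smooth, and at each generic point $\eta$ of $X$ the local ring $\OO_{X,\eta}$ is a field over which the conormal sequence of the smooth locus is exact, so $\delta_\eta$ is injective. Now if $g \in I/I^2$ satisfies $\delta(g) = 0$, then $g$ maps to zero in each $\OO_{X,\eta}^{\oplus c}$; since $X$ is reduced the natural map $\OO_X \hookrightarrow \prod_\eta \OO_{X,\eta}$ into the total ring of fractions is injective, forcing $g = 0$. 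Hence $\delta$ is injective, $\tr{pd}(\Omega_{X,\mathfrak{p}}) \le 1$ everywhere, and the claim is proved. The main obstacle is precisely this step: the left exactness of the conormal sequence must be extracted from reducedness and generic smoothness over $\kappa$, rather than from the complete-intersection hypothesis alone.
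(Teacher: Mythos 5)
Your proof is correct and takes essentially the same route as the paper: both use the conormal sequence of a local complete intersection embedding as a length-one free resolution of $\Omega_X$, with injectivity of the first map coming from reducedness together with local freeness of $\mathcal{I}/\mathcal{I}^2$. The only difference is that you spell out the generic-smoothness and torsion-freeness justification for that injectivity (and route the conclusion through Corollary \ref{projective-dimension}), whereas the paper simply asserts the injectivity and reads off the Fitting ideals from the resolution.
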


\begin{proof}
    Embed $X$ into a smooth scheme $Y$. Since $X$ is locally a complete intersection, its conormal sheaf $\mathcal{I}/\mathcal{I}^2$ is locally free. Now consider the sequence,
    $$ \mathcal{I}/\mathcal{I}^2 \rightarrow \Omega_Y \otimes \OO_X \rightarrow \Omega_X \rightarrow 0 $$

    Since $X$ is reduced and $\mathcal{I}/\mathcal{I}^2$ is locally free, the first map is injective. We can use this free resolution to compute our higher fitting ideals and see that $\fitt_{(-1)^{i-1}d}^{i-1}(\Omega_X) = 0$ for $k > 1$.
\end{proof}

This proposition along with \ref{sing-inc} lead to the following question,

\begin{question}
    Let $X$ be a quasi-projective variety. If $\sing(X)$ (the scheme theoretic singular locus) is smooth, does this imply that $X$ is a local complete intersection variety?
\end{question}

\section{Brill-Noether Ideals} \label{Brill-Noether Ideals}

We now turn our attention to constructing generalizations of the classical Brill-Noether spaces. Recall that given a smooth curve $C$, the Brill-Noether space $W^r_d$ consists of line bundles of degree $d$ whose space of global sections has dimension at least $r+1$. The space $W^r_d$ parameterizes families of degree $d$ line bundles $L$, that satisfy the homological condition that $\fitt_{g-d+r} R^1\pi_*L = 0$. Written more precisely, we have that $W^r_d$ represents the functor,
$$ Y \mapsto \left\{ L \in \Pic^d_{X/S}(Y) : \fitt_{g-d+r}(R^1 \pi_* L) = 0 \right\} $$

We wish to generalize this to an arbitrary scheme. To do this requires two steps, first we need to remove the dependence on degree since this is only a good notion for curves. We do this by considering the locus $W^k_X$ of line bundles whose space of global sections has dimension at least $k+1$ all at once rather than breaking it up by component of $\pic_X$. This will give us a scheme that is not noetherian since it will live on infinitely many components of $\pic_X$ but this is not an issue as the connected components are noetherian. The second step is to find the right generalization of the homological condition $\fitt_{g-d+r}(R^1 \pi_* L) = 0$. This is made more difficult by the fact that we no longer have a notion of degree and we have more than two cohomology groups to deal with, so we must generalize this a different way. This is where we use our higher Fitting ideals.

\begin{definition}
    Let $f: X \rightarrow Y$ be a proper morphism of locally noetherian schemes and let $\F$ be a coherent sheaf on $X$, flat over $Y$. We define the \textbf{Brill-Noether ideals} of $\F$ as,
    $$ \BN^k(\F) = \underline{\fitt}_k^0(Rf_* \F) $$

    We define the \textbf{Brill-Noether loci} of $\F$ as the closed subschemes determined by $\BN^k(\F)$ and denote it $W^k(\F)$.
\end{definition}

We suppress the dependence on $f$ as the morphism is usually clear from context. We will denote them $\BN^k_f(\F)$ and $W^k_f(\F)$ if the morphism is not clear. Now we show that these Brill-Noether loci are supported on the same set as in the case of curves. 

\begin{theorem} \label{BN-ideals}
    Let $f: X \rightarrow Y$ be a proper morphism of locally noetherian schemes and let $\F$ be a coherent sheaf on $X$, flat over $Y$. Then
    $$ \tr{Supp } W^k(\F) = \{y \in Y | h^0(X_y, \F_y) \geq k + 1\} $$

\end{theorem}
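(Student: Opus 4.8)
The plan is to reduce to an affine base and then replace $Rf_*\F$ by a single explicit complex whose first differential controls everything. Since $\BN^k(\F)$ commutes with restriction to opens (it is computed via $\underline{\fitt}$, which commutes with flat base change) and the assertion concerns supports, I would first assume $Y = \spec R$ with $R$ noetherian. Because $f$ is proper and $\F$ is coherent and flat over $Y$, the theory of cohomology and base change (the Grothendieck complex, in the style of Mumford's treatment for abelian varieties) provides a bounded complex of finite free $R$-modules
$$ P^\bullet: \quad 0 \to P^0 \xrightarrow{d^0} P^1 \to \cdots \to P^n \to 0 $$
concentrated in degrees $\geq 0$, quasi-isomorphic to $Rf_*\F$, and with the property that for every $R$-algebra $R'$ one has $H^i(X_{R'}, \F_{R'}) \cong H^i(P^\bullet \otimes_R R')$ functorially in $R'$. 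In particular this exhibits $Rf_*\F$ as a perfect object of $D^b_{coh}(Y)$, so that $\BN^k(\F) = \underline{\fitt}^0_k(Rf_*\F)$ is defined.

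Next I would identify the ideal explicitly. By Corollary \ref{bounded-complex}, the ideal $\underline{\fitt}^0_k$ may be computed from any representative by finite free modules concentrated in degrees $\geq 0$, and the observation following that corollary gives $\underline{\fitt}^0_k(P^\bullet) = I_{r_0 - k}(d^0)$, where $r_0 = \tr{rk } P^0$. Hence $\BN^k(\F) = I_{r_0 - k}(d^0)$, and $W^k(\F)$ is the determinantal subscheme cut out by the $(r_0-k)\times(r_0-k)$ minors of the single matrix $d^0$.

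Finally I would compute the support by base change to residue fields. For a point $y \in Y$ with residue field $k(y)$, the base-change isomorphism in degree $0$ reads $H^0(X_y, \F_y) \cong \ker(d^0 \otimes k(y))$, since $P^{-1} = 0$; a rank count then gives
$$ h^0(X_y, \F_y) = r_0 - \tr{rk}(d^0_y). $$
Therefore $h^0(X_y, \F_y) \geq k+1$ if and only if $\tr{rk}(d^0_y) \leq r_0 - k - 1$, which by the support description for determinantal subschemes proved in Section \ref{fitting ideals-chapter} is exactly the condition that every $(r_0-k)\times(r_0-k)$ minor of $d^0$ vanish at $y$, that is, $y \in \tr{Supp } W^k(\F)$. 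This yields the claimed equality of sets.

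I expect the only real content — the hard part — to be the appeal to cohomology and base change: producing a bounded complex of finite free modules that simultaneously represents $Rf_*\F$ and computes fiber cohomology after arbitrary base change, together with the care that it can be taken in nonnegative degrees so that $\underline{\fitt}^0_k$ genuinely equals $I_{r_0-k}(d^0)$ rather than an ideal of some other differential. Everything after that is the elementary rank count $h^0 = r_0 - \tr{rk}(d^0_y)$ combined with the determinantal support statement, both of which are formal.
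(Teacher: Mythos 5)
Your proposal is correct and follows essentially the same route as the paper: reduce to an affine base, invoke the Mumford/Grothendieck complex of finite free modules in nonnegative degrees representing $Rf_*\F$ compatibly with base change, identify $\BN^k(\F)$ with $I_{r_0-k}(d^0)$, and conclude via the rank count $h^0(X_y,\F_y)=r_0-\tr{rk}(d^0_y)$ together with the determinantal support description. No gaps.
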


\begin{proof}

This is local so we may assume $Y$ is affine. By \cite[5]{Mumford70}, we may choose a bounded complex of locally free sheaves on $Y$,

\begin{equation} \label{base-change-complex}
 \mathcal{E}^0 \overset{\delta^0}{\rightarrow} \mathcal{E}^1 \overset{\delta^1}{\rightarrow} \dots \overset{\delta^{n-1}}{\rightarrow} \mathcal{E}^n 
\end{equation}
that is quasi-isomorphic to $Rf_* \F$. By cohomology and base change (see \cite[\href{https://stacks.math.columbia.edu/tag/071M}{Tag 071M}]{stacks-project2}) $\mathcal{E}^\kdot$ has the following property:

For any morphism $T \rightarrow Y$,
\begin{equation} \label{bc-complex}
    R^i(f_T)_* \F_T \cong h^i(\mathcal{E}_T^\kdot)
\end{equation}

In particular, for any point $y \in Y$, we have that

\begin{equation} \label{base-change-complex-eq}
 H^0(X_y,\F_y) = \ker \delta^0_y 
\end{equation}

Let $r_0 = \tr{rank } \mathcal{E}^0$. By definition, $\BN^k(\F) = I_{r_0-k}(\delta^0)$. Therefore, $W^k(\F)$ is the locus where the rank of $\delta^0$ is at strictly less than $r_0-k$. By \ref{base-change-complex-eq}, this is the locus where $h^0(X_y,\F_y) \geq k+1$.
\end{proof}

It follows from the property \ref{bc-complex} that Brill-Noether ideals commute with arbitrary base change. Note the difference from higher fitting ideals that only commute with flat base change.

\begin{corollary} \label{BN-basechange}
Let $f: X \rightarrow Y$ be a proper morphism of locally noetherian schemes and let $\F$ be a coherent sheaf on $X$, flat over $Y$. Let $g: T \rightarrow X$ be any morphism. Then
$$ g^{-1} \BN^k(\F) = \BN^k(g^* \F) $$
\end{corollary}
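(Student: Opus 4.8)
The plan is to reuse the universal base-change complex built in the proof of Theorem \ref{BN-ideals} and to reduce everything to the fact that ideals of minors commute with pullback, Proposition \ref{minors-basechange}. (I read the base-change map as $g \colon T \to Y$, since $\BN^k(\F)$ is an ideal sheaf on $Y$, and $g^*\F$ as the pullback $\F_T$ of $\F$ to $X_T = X \times_Y T$, with $f_T \colon X_T \to T$.) The point is that $\BN^k(\F) = \underline{\fitt}_k^0(Rf_*\F)$ depends, by the remark following Corollary \ref{bounded-complex}, only on the single differential $\delta^0$ of a computing complex, and base change simply replaces $\delta^0$ by its pullback; so the whole statement collapses to the minors of one matrix.

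First I would reduce to $Y$ affine, as the assertion is local on $Y$. Then, exactly as in the proof of Theorem \ref{BN-ideals}, I invoke the bounded complex $\mathcal{E}^\kdot$ of locally free sheaves on $Y$ from \ref{base-change-complex}, quasi-isomorphic to $Rf_*\F$ and satisfying the base-change property \ref{bc-complex}. By the remark after Corollary \ref{bounded-complex},
$$ \BN^k(\F) = \underline{\fitt}_k^0(\mathcal{E}^\kdot) = I_{r_0 - k}(\delta^0), \qquad r_0 = \tr{rank } \mathcal{E}^0. $$
Given $g \colon T \to Y$, the pullback $\mathcal{E}^\kdot_T = g^*\mathcal{E}^\kdot$ computes $R(f_T)_*\F_T$ by \ref{bc-complex}, hence is quasi-isomorphic to it; since $\mathcal{E}^0_T$ is locally free of the same rank $r_0$, the identical computation gives
$$ \BN^k(\F_T) = \underline{\fitt}_k^0(\mathcal{E}^\kdot_T) = I_{r_0 - k}(\delta^0_T) = I_{r_0-k}(g^*\delta^0). $$
Proposition \ref{minors-basechange} then reads $g^{-1} I_{r_0-k}(\delta^0) = I_{r_0-k}(g^*\delta^0)$, and combining the two displays yields $g^{-1}\BN^k(\F) = \BN^k(\F_T)$, as claimed.

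The step needing the most care is the justification that $\mathcal{E}^\kdot_T$ may legitimately be used to compute $\BN^k(\F_T)$, i.e. that $\mathcal{E}^\kdot_T$ is genuinely quasi-isomorphic to $R(f_T)_*\F_T$ rather than merely agreeing with it on individual cohomology sheaves as \ref{bc-complex} literally states. This upgrade is precisely the universality of the cohomology-and-base-change complex of \cite{Mumford70}: because $\mathcal{E}^\kdot$ consists of flat (locally free) $\OO_Y$-modules, one has $Lg^*\mathcal{E}^\kdot = g^*\mathcal{E}^\kdot = \mathcal{E}^\kdot_T$, and the complex is constructed so that this derived pullback represents $R(f_T)_*\F_T$ compatibly for every $T \to Y$. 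This is the only place where the flatness of $\F$ over $Y$ is used and where the argument genuinely needs $g$ to be arbitrary; it is exactly the feature separating Brill-Noether ideals from the higher Fitting ideals of Lemma \ref{flat-base-change}, which commute only with flat base change.
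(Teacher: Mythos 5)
Your proof is correct and follows the same route the paper intends: the corollary is derived there directly from the base-change property of Mumford's complex, exactly as you do, and your reading of $g$ as a morphism $T \to Y$ (with $g^*\F$ meaning $\F_T$ on $X_T$) is the right correction of the typo in the statement. Your observation that one needs the pulled-back complex to be quasi-isomorphic to $R(f_T)_*\F_T$, not merely to agree with it on individual cohomology sheaves, is a genuine refinement of a point the paper leaves implicit.
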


As a sanity check, we calculate $\BN^k(\F)$ in the simplest possible case.

\begin{example}
Let $f: X \rightarrow Y$ be a proper morphism of locally noetherian schemes and let $\F$ be a coherent sheaf on $X$, flat over $Y$. Assume $Y$ is connected. Suppose that $H^i(X_y,\F_y) = 0$ for all $i > 0$ and all $y \in Y$. Since $Y$ is connected and $\F$ is flat over $Y$, the Euler characteristic of $\F$ is constant which implies that $h^0(X_y,\F_y)$ is the same for all $y \in Y$ (call this value $N$). Cohomology and base change \cite[III.12.11]{Hartshorne77} implies that $Rf_* \F = f_*\F$ and $f_* \F$ is locally free of rank $N$. Now we can easily calculate that

  \begin{equation*}
    \BN^k(\F) = \underline{\fitt}_k^0(f_* \F) =
    \begin{cases*}
      \OO_Y & if $k < N$ \\
      0        & if $k \geq N$
    \end{cases*}
  \end{equation*}
  
\end{example}

\subsection{Families of Sheaves with Liftable Sections} \label{liftable-sections}
Let $f: X \rightarrow Y$ be a proper morphism of locally noetherian schemes and let $\F$ be a coherent sheaf on $X$, flat over $Y$. Given an open subset $U \subseteq Y$ and a section $s \in H^0(X, f^{-1}(U))$, we may restrict $s$ to a section of $H^0(X_y,\F_y)$ for any $y \in U$. In this section, we are interested in when a section of $H^0(X_y,\F_y)$ can be lifted to a section on $f^{-1}(U)$ for some $U$.

\begin{definition}
    With notation as above, we say that $\F$ has \textbf{liftable sections over $Y$} if the natural map
    $$ f_* \F \otimes k(y) \rightarrow H^0(X_y,\F_y) $$
    is surjective for all $y$.
\end{definition}

We show how the Brill-Noether ideals characterize the property of having liftable sections.

\begin{proposition} \label{grauert}
    Let $f: X \rightarrow Y$ be a proper morphism of locally noetherian schemes and let $\F$ be a coherent sheaf on $X$, flat over $Y$. Assume that $Y$ is connected. The following are equivalent:
    \begin{enumerate}
   \item $\F$ has liftable sections over $Y$.
   \item There is some $k$ such that $\BN^{k}(\F) = 0$ and $\BN^{k+1}(\F) = \OO_Y$.
  \end{enumerate}

  Furthermore, if these conditions hold then $f_* \F$ is locally free of rank $k+1$ and $\phi_y$ is an isomorphism for all $y$.
\end{proposition}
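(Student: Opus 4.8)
The plan is to reduce everything to the local structure of the single map $\delta^0 \colon \mathcal{E}^0 \to \mathcal{E}^1$ at the start of the base-change complex $\mathcal{E}^\kdot$ built in the proof of Theorem \ref{BN-ideals}. Working locally on an affine $Y$, recall from that proof that $\mathcal{E}^\kdot$ is quasi-isomorphic to $Rf_*\F$, that $f_*\F = \ker \delta^0$ and $H^0(X_y,\F_y) = \ker \delta^0_y$ for every $y$, and that $\BN^j(\F) = I_{r_0 - j}(\delta^0)$ where $r_0 = \tr{rank}\,\mathcal{E}^0$. Under these identifications the comparison map $\phi_y$ is exactly the natural map $\ker\delta^0 \otimes k(y) \to \ker \delta^0_y$ induced by the inclusion $\ker\delta^0 \hookrightarrow \mathcal{E}^0$, so the entire proposition becomes a statement about when this map is surjective.

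The key device is a local normal form. Fix $y$ and set $r = \tr{rank}\,\delta^0_y$. Then some $r \times r$ minor of $\delta^0$ is nonzero modulo $\m_y$, hence a unit in $\OO_{Y,y}$, so after an invertible change of basis on $\mathcal{E}^0$ and $\mathcal{E}^1$ over $\OO_{Y,y}$ we may write $\delta^0 \cong \tr{diag}(I_r, E)$ for some matrix $E$ whose entries lie in $\m_y$. A direct computation then gives $I_{r+1}(\delta^0) = I_1(E)$ and $\ker \delta^0 \cong \ker E$, compatibly with reduction modulo $\m_y$. I would prove the local equivalence: $\phi_y$ is surjective $\iff E = 0 \iff I_{r+1}(\delta^0)_y = 0$. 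The implication $E = 0 \Rightarrow \phi_y$ surjective is immediate. For the converse, surjectivity of $\phi_y$ says $\ker E \otimes k(y) \to \ker E_y$ is onto; since $E_y = 0$ the target is all of $k(y)^{\,r_0-r}$, so the cokernel of the inclusion $\ker E \hookrightarrow \OO_{Y,y}^{\,r_0-r}$ vanishes after $\otimes\, k(y)$, and Nakayama forces $\ker E = \OO_{Y,y}^{\,r_0-r}$, i.e. $E = 0$.

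Both directions then follow formally. For $(2)\Rightarrow(1)$, write $r = r_0 - k - 1$; the hypotheses read $I_r(\delta^0) = \OO_Y$ and $I_{r+1}(\delta^0) = 0$, so locally some $r$-minor is a unit and the normal form gives $E = 0$. Hence $\delta^0 \cong \tr{diag}(I_r,0)$ locally, $f_*\F = \ker \delta^0$ is locally free of rank $r_0 - r = k+1$, and $\phi_y$ is an isomorphism for all $y$; this yields $(1)$ together with the ``furthermore''. For $(1)\Rightarrow(2)$, surjectivity of $\phi_y$ forces $E = 0$ in the normal form at each $y$, so $\delta^0 \cong \tr{diag}(I_{r(y)},0)$ over $\OO_{Y,y}$ with $r(y) = \tr{rank}\,\delta^0_y$. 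This exhibits $r(y)$, and hence $h^0(X_y,\F_y) = r_0 - r(y)$, as locally constant; since $Y$ is connected it is globally constant, say $h^0 \equiv k+1$. Setting this $k$, the normal form gives $I_{r_0-k}(\delta^0) = I_{r+1}(\delta^0) = 0$ and $I_{r_0-k-1}(\delta^0) = I_r(\delta^0) = \OO_Y$ on every chart, i.e. $\BN^k(\F) = 0$ and $\BN^{k+1}(\F) = \OO_Y$.

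The main obstacle is precisely the gap between pointwise and scheme-theoretic rank information: the fibre ranks only show that the off-diagonal block $E$ has all entries in each $\m_y$, so that $I_{r+1}(\delta^0)$ lies in the nilradical, which over a non-reduced base is strictly weaker than $E = 0$. The whole weight of the argument is carried by the Nakayama step, where the hypothesis of \emph{liftable} (i.e. surjective) sections is exactly what upgrades ``$E$ vanishes at $y$'' to ``$E = 0$ near $y$''. The elementary example $\delta^0 = (\varepsilon)$ over $\kappa[\varepsilon]/(\varepsilon^2)$, for which $\phi_y$ fails to be surjective even though the single fibre rank is $0$, shows that without this surjectivity the implication genuinely breaks down; this is what makes the liftability condition the correct scheme-theoretic strengthening of constancy of $h^0$.
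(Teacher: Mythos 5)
Your proof is correct, but it takes a different route from the paper's. The paper also reduces to the first differential $\delta^0$ of the base-change complex, but then passes to the cokernel $\mathcal{W}$ of $\delta^0$, cites Hartshorne III.12.4--12.5 for the equivalence ``$\phi_y$ surjective for all $y$ $\iff$ $\mathcal{W}$ locally free,'' and concludes via the classical Fitting-ideal criterion for local freeness (Proposition \ref{locally-free-fitting}), observing that the Brill--Noether ideals are the Fitting ideals of $\mathcal{W}$ up to an index shift. You instead stay with the kernel of $\delta^0$ and give a self-contained argument: put $\delta^0$ in the local normal form $\mathrm{diag}(I_r,E)$ with $E$ having entries in $\m_y$, and show that surjectivity of $\phi_y$ forces $E=0$ by Nakayama applied to the cokernel of $\ker E \hookrightarrow \OO_{Y,y}^{\,r_0-r}$. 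In effect you reprove the cited cohomology-and-base-change statements and the local-freeness criterion from scratch in this special case. What your version buys is transparency: the ``furthermore'' clause falls out immediately (with the correct rank $k+1$, where the paper's proof has a slip and writes rank $k$), the role of connectedness in producing a single global $k$ is explicit via local constancy of $\mathrm{rank}\,\delta^0_y$, and your $\kappa[\varepsilon]/(\varepsilon^2)$ example isolates exactly why the scheme-theoretic vanishing $\BN^k(\F)=0$ is stronger than pointwise constancy of $h^0$. What the paper's version buys is brevity and reuse of machinery already developed (the Fitting-ideal characterization of local freeness), which it needs elsewhere anyway. The only points you should make sure are nailed down are the standard ones you assert without proof: that the base-change map $\phi_y$ is identified with the map $\ker\delta^0\otimes k(y)\to\ker\delta^0_y$ induced by the inclusion into $\mathcal{E}^0$ (this is part of the Grothendieck-complex formalism and is also implicitly used by the paper), and the identity $I_{r+1}(\mathrm{diag}(I_r,E))=I_1(E)$ together with invariance of ideals of minors under change of basis; both are routine.
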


    \begin{proof}
        We work locally on $Y$ so we may choose a complex $\mathcal{E}^\kdot$ of finite locally free sheaves on $Y$ that is quasi-isomorphic to $Rf_* \F$ as in \ref{base-change-complex}. Let $\mathcal{W}$ be the cokernel of the map $\mathcal{E}^0 \rightarrow \mathcal{E}^1$,
        $$ \mathcal{E}^0 \rightarrow \mathcal{E}^1 \rightarrow \mathcal{W} \rightarrow 0 $$
 
        It follows from \cite[III, 12.4]{Hartshorne77} and \cite[III, 12.5]{Hartshorne77} that $\phi_y$ is surjective for all $y$ if and only if $\mathcal{W}$ is locally free. The Brill-Noether ideals are exactly the fitting ideals of $\mathcal{W}$, so the equivalence of (1) and (2) follows from Proposition \ref{locally-free-fitting}.

        Now assume the conditions hold. Then (1) implies, using cohomology and base change \cite[III, 12.11]{Hartshorne77}, that $\phi_y$ is an isomorphism for all $y$ and $f_* \F$ is locally free of rank $k$.
        \end{proof}

This proposition, along with Corollary \ref{BN-basechange}, implies that if $T$ is connected and we have a morphism $T \rightarrow Y$ such that $\F_{X_T}$ has liftable sections over $T$, then $T \rightarrow Y$ must factor through $W^k(\F) \setminus W^{k+1}(\F)$ for some $k$. Putting this all together, we obtain:

\begin{theorem} \label{functorial}
    Let $f: X \rightarrow Y$ be a proper morphism of locally noetherian schemes and let $\F$ be a coherent sheaf on $X$, flat over $Y$. Then
    $$ \dot{\bigcup}_{k \in \mathbb{Z}} W^k(\F) \setminus W^{k+1}(\F) \tr{ represents the functor } $$
$$ T \rightarrow  \left\{ T \rightarrow Y : \F_T \tr{ has liftable sections over } T \right\} $$
\end{theorem}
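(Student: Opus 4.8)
The plan is to produce the representing scheme explicitly. Set $Z := \dot{\bigcup}_{k\in\Z}(W^k(\F)\setminus W^{k+1}(\F))$ and let $\pi\colon Z\to Y$ be the morphism glued from the locally closed immersions $W^k(\F)\setminus W^{k+1}(\F)\hookrightarrow Y$. By Theorem \ref{BN-ideals} one has $\mathrm{Supp}\,W^{k+1}(\F)\subseteq\mathrm{Supp}\,W^k(\F)$, and a point $y\in Y$ lies in the $k$-th stratum exactly when $h^0(X_y,\F_y)=k+1$; hence the strata have pairwise disjoint images that cover $Y$. Since each stratum immersion is a monomorphism and the images are disjoint, $\pi$ is a monomorphism. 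The functor under consideration is a subfunctor of the functor of points of $Y$, so I would reduce the theorem to the statement that a morphism $g\colon T\to Y$ lifts along $\pi$ (necessarily uniquely, as $\pi$ is a monomorphism) if and only if $\F_T$ has liftable sections over $T$.

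I would first treat the implication ``lifts $\Rightarrow$ liftable''. A lift $h\colon T\to Z$ is the same datum as an open-closed decomposition $T=\dot{\bigcup}_k T_k$ with $T_k:=h^{-1}(W^k(\F)\setminus W^{k+1}(\F))$, together with morphisms $T_k\to W^k(\F)\setminus W^{k+1}(\F)$. On $T_k$ the map $g|_{T_k}$ factors through the closed subscheme $W^k(\F)=V(\BN^k(\F))$ and has image disjoint from $W^{k+1}(\F)$. The first condition says $(g|_{T_k})^{-1}\BN^k(\F)=0$, and the second says $(g|_{T_k})^{-1}\BN^{k+1}(\F)$ cuts out the empty set, hence equals $\OO_{T_k}$. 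By the base-change identity of Corollary \ref{BN-basechange} these become $\BN^k(\F_{T_k})=0$ and $\BN^{k+1}(\F_{T_k})=\OO_{T_k}$, so Proposition \ref{grauert}, applied on each connected component of $T_k$, gives that $\F_{T_k}$ has liftable sections over $T_k$. Liftability is a fiberwise condition, so it follows over all of $T$.

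For the converse, I would start from the assumption that $\F_T$ has liftable sections over $T$. As $T$ is locally noetherian it is locally connected, so its connected components are open and closed. On a component $T'$, Proposition \ref{grauert} together with Corollary \ref{BN-basechange} (as recorded in the remark following Proposition \ref{grauert}) shows that $g|_{T'}$ factors through exactly one stratum $W^k(\F)\setminus W^{k+1}(\F)$, with $k=h^0(X_t,\F_t)-1$ for $t\in T'$. Regrouping the components by the value of $k$ produces an open-closed partition $T=\dot{\bigcup}_k T_k$ on which $g|_{T_k}$ factors through the $k$-th stratum; these factorizations glue to a lift $T\to Z$ of $g$. Together with the first implication and the monomorphism property of $\pi$, this identifies $Z$ with the representing scheme.

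The main difficulty, and essentially the only nonformal input beyond the cited results, is the two-way dictionary used above: that ``$g$ factors through the closed subscheme $W^k(\F)$'' and ``the image of $g$ avoids $W^{k+1}(\F)$'' correspond, via Corollary \ref{BN-basechange}, to the vanishing $\BN^k(\F_T)=0$ and the unit equality $\BN^{k+1}(\F_T)=\OO_T$. The remaining care is purely organizational: because Proposition \ref{grauert} and the remark following it are phrased for a connected base, I must check that liftability over a possibly disconnected $T$ is controlled stratum-by-stratum by the locally constant function $t\mapsto h^0(X_t,\F_t)$, so that the connected results can be applied componentwise and reassembled into the global open-closed decomposition $T=\dot{\bigcup}_k T_k$.
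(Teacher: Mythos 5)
Your proposal is correct and follows exactly the route the paper intends: the paper gives no separate proof of Theorem \ref{functorial}, deducing it in one sentence from Proposition \ref{grauert} together with Corollary \ref{BN-basechange} (a connected $T$ with liftable sections must factor through a single stratum $W^k(\F)\setminus W^{k+1}(\F)$). Your writeup simply fills in the details of that deduction — the monomorphism property of the stratification, the dictionary between factoring through/avoiding the closed subschemes and the ideal conditions $\BN^k=0$, $\BN^{k+1}=\OO_T$, and the reduction to connected components — all of which is sound.
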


This along with Theorem \ref{BN-ideals} allow us to construct Brill-Noether spaces for any moduli space of sheaves.

\begin{corollary} \label{BN-moduli-of-sheaves}
Let $X$ be a projective scheme and let $M$ be a moduli space of sheaves on $X$ with universal sheaf $\mathcal{U}$. Then
$$ \tr{Supp } W^k(\mathcal{U}) = \{\F \in M | h^0(X, \F) \geq k + 1\} $$

and
$$ \dot{\bigcup}_{k \in \mathbb{Z}} W^k(\mathcal{U}) \setminus W^{k+1}(\mathcal{U}) \tr{ represents the functor } $$
$$ T \rightarrow  \left\{ T \rightarrow M : \mathcal{U}_T \tr{ has liftable sections over } T \right\} $$

\end{corollary}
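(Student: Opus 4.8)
The plan is to specialize Theorems~\ref{BN-ideals} and~\ref{functorial} to the projection morphism. By the definition of a moduli space of sheaves, $M$ is a locally noetherian scheme and $\mathcal{U}$ is a coherent sheaf on $X \times M$ that is flat over $M$. Let $f \colon X \times M \to M$ be the projection; since $X$ is projective, $f$ is proper, so all hypotheses of both theorems are satisfied with $Y = M$ and $\F = \mathcal{U}$. Thus the entire content of the corollary will follow by unwinding what these two theorems say in this situation.

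For the support statement, I would note that the fiber of $f$ over a closed point $y \in M$ is canonically identified with $X$, and that $\mathcal{U}_y \cong \F$, where $\F$ is the sheaf corresponding to $y$ under the moduli property. Hence $h^0((X \times M)_y, \mathcal{U}_y) = h^0(X, \F)$, and Theorem~\ref{BN-ideals} gives directly
$$ \tr{Supp } W^k(\mathcal{U}) = \{\F \in M : h^0(X, \F) \geq k+1\}, $$
where on the right we use the paper's convention of naming closed points of $M$ by the sheaves they represent.

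For the functorial statement, the key identification is that for any $T \to M$ the fiber product $(X \times M) \times_M T$ is canonically $X \times T$, so that the pullback $\F_T$ appearing in Theorem~\ref{functorial} is exactly the sheaf $\mathcal{U}_T$ of the corollary. Substituting $Y = M$ into that theorem therefore yields verbatim that $\dot{\bigcup}_{k} W^k(\mathcal{U}) \setminus W^{k+1}(\mathcal{U})$ represents $T \mapsto \{T \to M : \mathcal{U}_T \tr{ has liftable sections over } T\}$.

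There is no serious obstacle, as the corollary is a direct specialization; the only point deserving a remark is that $\tr{Supp } W^k(\mathcal{U})$ is a closed subset of the whole space $M$, whereas the moduli property identifies only the closed points of $M$ with sheaves $\F$. Since $W^k(\mathcal{U})$ is a closed subscheme and the components of $M$ in the intended applications (e.g.\ the Picard scheme) are of finite type over $\kappa$ and hence Jacobson, the closed points are dense in any closed subset and no information is lost in reading the support as a set of sheaves.
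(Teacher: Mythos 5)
Your proposal is correct and matches the paper's intent exactly: the corollary is stated as an immediate specialization of Theorems~\ref{BN-ideals} and~\ref{functorial} to the projection $X \times M \to M$ with $\F = \mathcal{U}$, which is precisely what you do. The paper gives no further argument, and your additional remark about identifying closed points of $M$ with sheaves is a harmless clarification rather than a divergence.
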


Note that even if we do not have a universal sheaf (e.g. $\pic_X$ when $X$ has no rational points), we can still obtain this result by descent since Brill-Noether ideals commute with base change. We choose to omit the details of this for the sake of simplicity.

\subsection{Comparison to the Classical Case}

Our main application of Theorem \ref{BN-ideals} is to moduli spaces of sheaves on a fixed variety. Let $M$ be a moduli space of sheaves on a variety $X$ with universal sheaf $\mathcal{U}$ on $M \times X$. Consider the space $W^k(\mathcal{U})$. By Theorem \ref{BN-ideals},
$$ \tr{Supp } W^k(\mathcal{U}) = \{ \F \in M | h^0(X,\F) \geq k+1\} $$

In the case where $X$ is a smooth curve and $M = \pic^d_C$, the classical Brill-Noether loci, $W^k_d$, are defined by the ideal $\fitt_{k-\chi}(R^1\pi_* \mathcal{U})$ where $\chi = d-g+1$. By Riemann-Roch, $\chi$ is equal to the Euler characteristic of $\LL$ for any $\LL \in \pic^d_C$. Our next result implies that $W^k(\mathcal{U}) = W^k_d$ in this case.

More generally, in \cite{Costa10}, the authors define Brill-Noether loci for moduli of stable vector bundles on an arbitrary variety with the additional condition that $H^i(X,E) = 0$ for $i \geq 2$ for all vector bundles $E$ in the moduli space. They also define them with the ideals $\fitt_{k-\chi}(\pi_* \mathcal{U})$ where $\chi$ is the Euler characteristic of any $E$ in the moduli space. The following result shows that $W^k(\mathcal{U})$ agrees with the Brill-Noether loci defined in \cite{Costa10} as well.

\begin{corollary} \label{classical}
    Let $f: X \rightarrow Y$ be a proper morphism of locally noetherian schemes, $\F$ a coherent sheaf on $X$, flat over $Y$ and assume $H^i(X_y,\F_y) = 0$ for all $i \geq 2$ and all $y \in Y$. Assume further that $\chi(\F_y)$ is the same for all $y$, call this value $\chi$. Then $\mathcal{BN}^k(\F) = \fitt_{k-\chi}(R^1f_* \F)$.
\end{corollary}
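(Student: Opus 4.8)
The plan is to reduce the statement, locally on $Y$, to a single computation with a two-term complex of locally free sheaves, in which both sides become ideals of minors of the same matrix.

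First I would work locally and take $Y$ affine, and exactly as in the proof of Theorem~\ref{BN-ideals} choose the Mumford complex $\mathcal{E}^\kdot$ of finite locally free sheaves quasi-isomorphic to $Rf_*\F$ and satisfying the base-change property \eqref{bc-complex}. The key step is to use the hypothesis $H^i(X_y,\F_y)=0$ for $i\ge 2$ to replace $\mathcal{E}^\kdot$ by a complex concentrated in degrees $0$ and $1$. For each $y$, property \eqref{bc-complex} gives $h^i(\mathcal{E}_y^\kdot)=H^i(X_y,\F_y)=0$ for $i\ge 2$; in particular, if $n\ge 2$ is the top degree, then $\operatorname{coker}(\delta^{n-1})\otimes k(y)=h^n(\mathcal{E}_y^\kdot)=0$ for all $y$, so $\delta^{n-1}$ is surjective by Nakayama. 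As its target is locally free the surjection splits, so $K=\ker\delta^{n-1}$ is locally free and, since $\delta^{n-1}\delta^{n-2}=0$ forces $\delta^{n-2}$ to factor through $K$, truncation yields a shorter complex $\dots\to\mathcal{E}^{n-2}\to K\to 0$ of locally free sheaves still quasi-isomorphic to $Rf_*\F$. Its derived restriction to each point is unchanged, so its fiberwise cohomology again vanishes above degree $1$ and I can iterate until the complex is two-term, say $[\mathcal{E}^0\xrightarrow{\delta^0}\mathcal{E}^1]$.

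With this model in hand, both ideals are read off $\delta^0$. Since $R^1f_*\F=h^1(Rf_*\F)=\operatorname{coker}(\delta^0)$, the presentation $\mathcal{E}^0\xrightarrow{\delta^0}\mathcal{E}^1\to\operatorname{coker}(\delta^0)\to 0$ gives $\fitt_{k-\chi}(R^1f_*\F)=I_{r_1-(k-\chi)}(\delta^0)$, writing $r_i=\tr{rank }\mathcal{E}^i$. On the other side, the formula $\underline{\fitt}^0_k(\mathcal{E}^\kdot)=I_{r_0-k}(\delta^0)$ noted after Corollary~\ref{bounded-complex}, together with the quasi-isomorphism invariance of $\underline{\fitt}$ from that corollary, gives $\BN^k(\F)=\underline{\fitt}^0_k(Rf_*\F)=I_{r_0-k}(\delta^0)$.

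It remains to match the two indices. The alternating sum of the ranks of a bounded complex equals the alternating sum of the fiberwise cohomology dimensions, so $\chi(\F_y)=r_0-r_1$ for every $y$; the hypothesis that $\chi(\F_y)$ is constant is what makes $k-\chi$ a well-defined global index, and $\chi=r_0-r_1$ yields $r_1-(k-\chi)=r_0-k$. Hence both ideals equal $I_{r_0-k}(\delta^0)$, proving the corollary. I expect the main obstacle to be the reduction in the second paragraph: one must check that each truncation preserves local freeness and the quasi-isomorphism to $Rf_*\F$, which is precisely the place where the vanishing of $H^i$ for $i\ge 2$ enters.
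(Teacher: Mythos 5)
Your proof is correct and follows essentially the same route as the paper: reduce to a two-term complex $[\mathcal{E}^0\xrightarrow{\delta^0}\mathcal{E}^1]$ quasi-isomorphic to $Rf_*\F$, identify $R^1f_*\F$ with $\operatorname{coker}(\delta^0)$, and match the indices via $\chi=r_0-r_1$ so that both ideals equal $I_{r_0-k}(\delta^0)$. The only difference is that where the paper cites the proof of Lemma 1 in Mumford's Chapter 5 for the existence of the two-term model, you supply the explicit Nakayama-plus-splitting truncation argument, which is a fine (and arguably more self-contained) way to obtain it.
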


\begin{proof}
    It follows from our assumption that $R^if_*\F = 0$ for $i \geq 2$. Then it follows from the proof of Lemma 1 in \cite[5]{Mumford70} that we can choose our complex $\mathcal{E}^\kdot$ (that we use to define $\mathcal{BN}^k(\F)$) to be 0 for $i \geq 2$. So we end up with a complex with only two nonzero terms,

    \begin{center}

    \begin{tikzcd}
    0 \arrow[r] & \mathcal{E}^0 \arrow[r, "\delta^0"] & \mathcal{E}^1 \arrow[r] & 0
    \end{tikzcd}
    \end{center}
    therefore the cokernel of $\delta^0$ is $R^1f_* \F$. So 
    
    \[ \fitt_{k-\chi}(R^1f_* \F) = I_{r_1-(k - \chi)}(\delta^0) = I_{r_0-k}(\delta^0) = \mathcal{BN}^k(\F) \qedhere \]
\end{proof}

\section{Brill-Noether Loci in the Picard Scheme} \label{Brill-Noether Loci-chapter}

For a smooth curve $C$ of genus $g$, the classical Brill-Noether variety $W_d^k$ has support
$$ \tr{Supp } W_d^k = \{ \LL \in \pic_C^d : h^0(C,\LL) \geq k+1\} $$
and represents the functor
$$ Y \rightarrow \left\{ L \in \Pic_C^d(Y) : \fitt_{k - \chi}(R^1\pi_*L) = 0 \right\} $$
where $\pi: C \times Y \rightarrow Y$ is the projection and $\chi = g-d+1$. See \cite[IV.3]{ACGH1} for details.

Our next result is a direct generalization of this for higher dimensional varieties where the fitting ideal is replaced with the Brill-Noether ideal.

\begin{corollary} \label{Brill-Picard}
    Let $X$ be a projective variety. Let $\pi: X \times \pic_X \rightarrow \pic_X$ be the projection and let $\mathcal{U}$ be a universal line bundle on $X \times \pic_X$. Then
$$ \tr{Supp } W^k(\mathcal{U}) = \{\LL \in \pic_X : h^0(X,\LL) \geq k+1\} $$
    and $$ \dot{\bigcup}_{k \in \mathbb{Z}} W^k(\mathcal{U}) \setminus W^{k+1}(\mathcal{U}) \tr{ represents the functor } $$
$$ T \rightarrow  \left\{ L \in \Pic_X(T) : L \tr{ has liftable sections over } T \right\} $$
\end{corollary}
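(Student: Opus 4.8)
The plan is to deduce both assertions from the general machinery already in place, applied to the proper morphism $\pi \colon X \times \pic_X \to \pic_X$ and the sheaf $\mathcal{U}$. First I would verify the hypotheses of Theorems \ref{BN-ideals} and \ref{functorial}: since $X$ is projective, $\pi$ is proper; since $\pic_X$ is locally noetherian and $\mathcal{U}$ is a line bundle, hence locally free and in particular flat over $\pic_X$, both theorems apply. (Neither requires $\pic_X$ to be connected, which is fortunate, as it is not.) In fact $\pic_X$ together with $\mathcal{U}$ is precisely a moduli space of sheaves in the sense of Section \ref{preliminaries}, so this is a special case of Corollary \ref{BN-moduli-of-sheaves}; the only work left is to rephrase its conclusion in terms of $\Pic_X(T)$.

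For the support statement I would apply Theorem \ref{BN-ideals} directly, which gives $\tr{Supp } W^k(\mathcal{U}) = \{\LL \in \pic_X : h^0(X_\LL, \mathcal{U}_\LL) \geq k+1\}$, and then identify $h^0(X_\LL,\mathcal{U}_\LL)$ with $h^0(X,\LL)$. For a closed point $\LL$ the residue field is $\kappa$, which is algebraically closed, so $X_\LL \cong X$, and the restriction $\mathcal{U}_\LL$ is a line bundle on $X$ in the class $\LL$; any two representatives differ by a line bundle pulled back from the point $\spec \kappa$, which is trivial, so $\mathcal{U}_\LL \cong \LL$ and the cohomologies agree. Since $\pic_X$ is locally of finite type over $\kappa$, hence Jacobson, checking the support on closed points suffices, yielding the first claim.

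For the functorial statement, Theorem \ref{functorial} tells me that $\dot{\bigcup}_{k \in \mathbb{Z}} W^k(\mathcal{U}) \setminus W^{k+1}(\mathcal{U})$ represents the functor $T \mapsto \{ g \colon T \to \pic_X : \mathcal{U}_T \tr{ has liftable sections over } T\}$, where $\mathcal{U}_T = (1\times g)^*\mathcal{U}$. To convert this into a statement about $\Pic_X(T)$, I would use that $\pic_X$ represents $\Pic_X$, so morphisms $g \colon T \to \pic_X$ correspond bijectively to classes in $\Pic_X(T)$, the class of $g$ being represented by $\mathcal{U}_T$; and by the universal property of $\mathcal{U}$, any representative $L$ of this class satisfies $L \cong \mathcal{U}_T \otimes \pi_T^* \mathcal{N}$ for some line bundle $\mathcal{N}$ on $T$.

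The one genuine point, which I expect to be the crux, is that having liftable sections is invariant under twisting by $\pi_T^*\mathcal{N}$; this is what makes the condition well defined on the class in $\Pic_X(T)$ and matches it with the condition on $\mathcal{U}_T$. I would prove this by the projection formula: $(\pi_T)_*(\mathcal{U}_T \otimes \pi_T^*\mathcal{N}) \cong (\pi_T)_*\mathcal{U}_T \otimes \mathcal{N}$, and on the fiber over $t \in T$ the comparison map $f_*\F \otimes k(t) \to H^0(X_t, \F_t)$ for $\F = \mathcal{U}_T \otimes \pi_T^*\mathcal{N}$ is obtained from the one for $\mathcal{U}_T$ by tensoring with the one-dimensional space $\mathcal{N} \otimes k(t)$; since tensoring with a line over a field is faithfully exact, the two maps are surjective simultaneously. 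This identifies the two functors and completes the proof.
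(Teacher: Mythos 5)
Your proposal is correct and follows the same route as the paper, whose entire proof is a one-line citation of Corollary \ref{BN-moduli-of-sheaves} applied to $\pic_X$ and $\mathcal{U}$. The extra step you supply---checking via the projection formula that having liftable sections is invariant under twisting by $\pi_T^*\mathcal{N}$, so that the condition is well defined on the class in $\Pic_X(T)$ and not just on the representative $(1\times g)^*\mathcal{U}$---is a genuine detail the paper leaves implicit, and your argument for it is sound.
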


\begin{proof}
    This follows directly from Theorem \ref{BN-moduli-of-sheaves} applied to $\pic_X$ and $\mathcal{U}$.
\end{proof}

Because of this, we define $W_X^k$ to be $W^k(\mathcal{U})$ from this corollary.

\begin{remark}
    Corollary $\ref{classical}$ shows that $W_C^k = \bigcup_d W_d^k$ in the case $X$ is a smooth curve. 
\end{remark}

There is also a relative version of Brill-Noether varieties. Let $p: C \rightarrow S$ be a projective smooth family of curves that admits a section. Then there exist relative Brill-Noether varieties $\mathcal{W}_d^k(p) \rightarrow Y$ such that the fiber of $\mathcal{W}_d^k(p)$ at $s$ is $W_d^k$ for the curve $C_s$. Furthermore, $\mathcal{W}_d^k$ represents the functor,
$$ Y \rightarrow \left\{ L \in \Pic_{C/S}^d(Y) : \fitt_{k-\chi}(R^1p_{Y*}L) = 0 \right\} $$

See \cite{ACGH2} for details.

Now let $f: X \rightarrow S$ be a projective morphism of noetherian schemes that admits a section. Then by \cite{kleiman2005picard}, the relative Picard scheme $\pic_{X/S}$ exists and there is a universal line bundle $\mathcal{U}$ on $X \times_S \pic_{X/S}$. Define $\mathcal{W}^k(f) = W^k(\mathcal{U})$. Then by Theorem \ref{BN-ideals}, we obtain

\begin{corollary}
    The fiber of $\mathcal{W}^k(f)$ at $s$ is $W_{X_s}^k$ and it represents the functor
$$ Y \rightarrow \left\{ L \in \Pic_{X/S}(Y) : \BN^k(L) = 0 \right\} $$
    
\end{corollary}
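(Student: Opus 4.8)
The plan is to deduce both assertions formally from the base-change compatibility of Brill-Noether ideals (Corollary \ref{BN-basechange}) together with the universal property of the relative Picard scheme. Write $\pi \colon X \times_S \pic_{X/S} \to \pic_{X/S}$ for the projection, and recall that $\mathcal{W}^k(f) = W^k(\mathcal{U})$ is by definition the closed subscheme of $\pic_{X/S}$ cut out by the ideal $\BN^k(\mathcal{U}) = \underline{\fitt}_k^0(R\pi_* \mathcal{U})$.

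For the statement on fibers, I would base change along the inclusion of a point $s \hookrightarrow S$. By Kleiman's construction (\cite{kleiman2005picard}) the fiber $(\pic_{X/S})_s$ is canonically identified with $\pic_{X_s}$, and under this identification $\mathcal{U}$ restricts to a universal line bundle $\mathcal{U}_s$ on $X_s \times \pic_{X_s}$ (here one uses that the base change of $X \times_S \pic_{X/S}$ along $\pic_{X_s} \hookrightarrow \pic_{X/S}$ is $X_s \times \pic_{X_s}$). Since $\mathcal{W}^k(f)$ is defined by $\BN^k(\mathcal{U})$, applying Corollary \ref{BN-basechange} to the fiber inclusion $g \colon \pic_{X_s} \hookrightarrow \pic_{X/S}$ gives $g^{-1}\BN^k(\mathcal{U}) = \BN^k(\mathcal{U}_s)$. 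The left side is the ideal cutting out the fiber of $\mathcal{W}^k(f)$ over $s$, and the right side is exactly the ideal defining $W^k_{X_s}$ (see Corollary \ref{Brill-Picard}); hence the fiber of $\mathcal{W}^k(f)$ over $s$ is $W^k_{X_s}$.

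For representability, I would first record that $\BN^k$ is invariant under twisting by a line bundle pulled back from the base: if $N$ is a line bundle on $Y$ then $R\pi_*(L \otimes \pi_Y^* N) \cong R\pi_* L \otimes N$ by the projection formula, and since ideals of minors are computed locally while $N$ is locally trivial, the resulting Fitting ideal is unchanged, so $\BN^k(L \otimes \pi_Y^* N) = \BN^k(L)$. Consequently $\BN^k(L) = 0$ is a well-defined condition on classes $L \in \Pic_{X/S}(Y)$ and the functor in the statement makes sense. Given such an $L$, the universal property of $\pic_{X/S}$ yields a morphism $\phi \colon Y \to \pic_{X/S}$ with $(\id \times \phi)^* \mathcal{U} \cong L \otimes \pi_Y^* N$ for some $N$; by the twist-invariance just noted together with Corollary \ref{BN-basechange}, $\BN^k(L) = \BN^k((\id \times \phi)^* \mathcal{U}) = \phi^{-1}\BN^k(\mathcal{U})$. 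Therefore $\BN^k(L) = 0$ if and only if $\phi^{-1}\BN^k(\mathcal{U}) = 0$, which by the universal property of the closed immersion $\mathcal{W}^k(f) = V(\BN^k(\mathcal{U})) \hookrightarrow \pic_{X/S}$ is equivalent to $\phi$ factoring through $\mathcal{W}^k(f)$. This gives a natural bijection between $\operatorname{Hom}(Y, \mathcal{W}^k(f))$ and $\{L \in \Pic_{X/S}(Y) : \BN^k(L) = 0\}$, as required.

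The main obstacle is bookkeeping rather than genuine difficulty. One must ensure that the twist ambiguity inherent in the universal bundle $\mathcal{U}$ does not disturb the correspondence, which is exactly the role of the twist-invariance of $\BN^k$, and one must check that the identification $(\pic_{X/S})_s \cong \pic_{X_s}$ is compatible with universal bundles so that Corollary \ref{BN-basechange} applies directly. Both points are routine given the cited results, so the corollary is essentially a formal consequence of base-change compatibility together with the universal properties of $\pic_{X/S}$ and of closed immersions.
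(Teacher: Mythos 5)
Your proposal is correct and follows the argument the paper intends: the paper offers no written proof here beyond ``by Theorem \ref{BN-ideals},'' and your write-up supplies exactly the missing ingredients (Corollary \ref{BN-basechange}, compatibility of $\pic_{X/S}$ with base change, twist-invariance of $\BN^k$ under tensoring by pullbacks from the base, and the universal property of the closed immersion $V(\BN^k(\mathcal{U})) \hookrightarrow \pic_{X/S}$). No discrepancy in approach; if anything your version is more careful than the paper's, which really needs Corollary \ref{BN-basechange} and not just Theorem \ref{BN-ideals} for the representability claim.
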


\subsection{Deforming Embeddings} \label{deforming-embeddings}

Let $\LL$ be a line bundle on a scheme $X$. Then $\LL$ naturally defines a rational map,
$$ \phi_\LL: X \dashrightarrow \mathbb{P}(H^0(X,\LL)) $$

Given a family of line bundles $L \in \Pic_X(T)$, it is natural to ask if the maps $\phi_{L_t}$ form a family as well. It turns out that this happens if and only if $L$ has liftable sections over $T$. Assume that $L$ has liftable sections over $T$. Then $\pi_{T*} L$ is locally free by Proposition \ref{grauert}. The adjunction map $\pi_{T}^*\pi_{T*} L \rightarrow L$ induces a natural rational map 
$$ \Phi: X \times T \dashrightarrow \mathbb{P}(\pi_{T*} L) $$

Given a point $t \in T$, we see that $\Phi_{X_t}: X_t \dashrightarrow \mathbb{P}^N$ is the rational map to projective space determined by the linear system
$$ \tr{Im}[\pi_{T*} L \otimes k(t) \rightarrow H^0(X_t,L_t)] $$

Since $L$ has liftable sections, $\Phi_{X_t} = \phi_{L_t}$. It is clear that if $L$ does not have liftable sections, then there is no way to form a family with the $\phi_{L_t}$ in this way.

Being very ample or basepoint-free depends entirely on the map $\phi_\LL$ so it makes sense to consider families with liftable sections when considering families of very ample or basepoint-free line bundles.

\section{Moduli of Very Ample Line Bundles} \label{Moduli of VA-chapter}

In this section, we will construct a moduli space for very ample line bundles on a variety. A natural place to start is by looking at the set of points of $\pic_X$ that correspond to very ample line bundles and see if we can give it the structure of a subscheme. It turns out this cannot happen, in section \ref{examples} we show that the locus of very ample line bundles need not be locally closed even in the case where $X$ is a smooth curve. The next best thing we could hope for is that the locus of very ample line bundles is a union of finitely many subschemes, with each one naturally parameterizing families of very ample line bundles with some extra property. This is what we do in Theorem \ref{va-moduli} using Theorem \ref{Brill-Picard}.

First, let us define the very ample locus of a family of line bundles. Let $f: X \rightarrow Y$ be a morphism and $L$ a line bundle on $X$. We define,
$$ \va(f,L) = \{y \in Y : L_y \tr{ is very ample}\} $$

Let $\mathcal{U}$ be a universal line bundle on $X \times \pic_X$ and $\pi$ be the projection $X \times \pic_X \rightarrow \pic_X$. We define $\va(X) = \va(\pi, \mathcal{U})$. We see that the closed points of $\va(X)$ are exactly the locus of line bundles $\LL \in \pic_X$ such that $\LL$ is very ample. If $X$ is projective then it follows from Corollary \ref{constructible} that $\va(X)$ is constructible. Therefore, if $\va(X)$ is not locally closed, then there is no subscheme of $\pic_X$ whose closed points correspond to the locus of very ample line bundles in $\pic_X$.

Similarly, we define the basepoint free locus of $f$ as,
$$ \bpf(f,L) = \{y \in Y : L_y \tr{ is basepoint free} \} $$

\subsection{Counterexamples} \label{examples}

In this section we show that for a non-hyperelliptic smooth curve $C$ of genus at least 5, $\va(C)$ is not locally closed. We show that $\va(C)$ is not locally closed in any neighborhood of the canonical bundle by showing that a general deformation of $K_C$ is very ample then writing an explicit deformation of $K_C$ that is not very ample. The picture to have in mind for what $\va(C)$ looks like is the plane with the $x$ and $y$ axes removed but the origin added back in, where the origin corresponds to the canonical bundle and the removed axes correspond to the special deformations.

\begin{theorem}
    Let $C$ be a non-hyperelliptic curve of genus at least 5. Then $\va(C)$ is not locally closed.
\end{theorem}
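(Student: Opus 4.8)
The plan is to work on the single connected component $P = \Pic_C^{2g-2}$ of $\pic_C$ containing the canonical class and to show that $\va(C) \cap P$ already fails to be locally closed; since $P$ is open and closed in $\pic_C$, this will force $\va(C)$ itself to be non-locally-closed. Tensoring with $K_C$ gives an isomorphism $J := \Pic^0_C \xrightarrow{\ \sim\ } P$, $N \mapsto K_C \otimes N$, and I will describe the preimage $A := \{N \in J : K_C \otimes N \tr{ is very ample}\}$ of $\va(C) \cap P$ completely explicitly.

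First I would record that $K_C$ is very ample, since $C$ is non-hyperelliptic and $K_C$ gives the canonical embedding; thus $\OO_C \in A$. Next, for $N \neq \OO_C$ I would run Riemann--Roch: since $\deg(K_C \otimes N) = 2g-2$ and $h^1(K_C \otimes N) = h^0(N^{-1}) = 0$, we get $h^0(K_C \otimes N) = g-1$, one less than the value at $N = \OO_C$. Using the standard criterion that a line bundle $L$ on a smooth curve is very ample iff $h^0(L(-p-q)) = h^0(L) - 2$ for every effective degree-two divisor $p+q$ (the case $p = q$ encoding separation of tangent vectors), together with the computation $h^1(L(-p-q)) = h^0(N^{-1}(p+q))$ for $L = K_C \otimes N$, I would conclude that for $N \neq \OO_C$ the bundle $K_C \otimes N$ fails to be very ample exactly when $h^0(N^{-1}(p+q)) > 0$ for some $p,q$, i.e. exactly when $N$ lies in the difference variety
$$ \Delta := \{\, \OO_C(a+b-c-d) : a,b,c,d \in C \,\}, $$
the image of the proper morphism $C^{(2)} \times C^{(2)} \to J$, $(D,D') \mapsto [D - D']$.

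With this identification in hand, $A = (J \setminus \Delta) \cup \{\OO_C\}$. The set $\Delta$ is closed (image of a projective variety), irreducible, contains $\OO_C$ (take $a+b = c+d$), and has dimension at most $4$; this is where the hypothesis $g \geq 5$ enters, since then $\dim \Delta \leq 4 < g = \dim J$, so $\Delta$ is a proper subvariety and a general deformation of $K_C$ is very ample. Because $\Delta$ is irreducible of positive dimension, $\OO_C$ is not isolated in $\Delta$, so every neighborhood of $\OO_C$ in $J$ meets $\Delta \setminus \{\OO_C\} = J \setminus A$; hence $A$ contains no neighborhood of $\OO_C$ and is not open in $J$. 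On the other hand $\overline{A} = J$, because $J \setminus \Delta$ is dense in the irreducible $J$. Thus $A$ is not open in its closure, hence not locally closed, and the theorem follows. Geometrically this is the promised picture: a full neighborhood of the origin minus the proper subvariety $\Delta$ (the removed ``axes''), with the single point $\OO_C$ (the canonical bundle) added back in.

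The main obstacle I anticipate is the exact bookkeeping in the second step: pinning the non-very-ample locus down to $\Delta$ on the nose rather than a larger or smaller set, correctly handling the jump of $h^0$ at $N = \OO_C$ so that the origin is genuinely reinstated, and verifying that $\Delta$ is a closed subvariety of the claimed dimension so that the count using $g \geq 5$ is legitimate. Everything after that is a short point-set-topology argument about $(J \setminus \Delta) \cup \{\OO_C\}$.
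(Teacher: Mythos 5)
Your proof is correct, and while it shares the paper's overall skeleton --- show the very ample locus is dense in the component $\Pic^{2g-2}_C$ but fails to be open at the canonical point, hence is not open in its closure --- the way you establish both halves is genuinely different. The paper gets density by citing Halphen's theorem \cite[IV.6.1]{Hartshorne77} (which needs $2g-2 \ge g+3$, i.e.\ $g \ge 5$) and then kills openness with the single explicit pencil $t \mapsto \OO(K_C + P - t)$, checking via \cite[IV.3.1]{Hartshorne77} that its general member is not even basepoint free. You instead compute the entire non-very-ample locus in the component: after identifying $\Pic^{2g-2}_C$ with $J = \Pic^0_C$, the failure locus is exactly the difference variety $\Delta = \{\OO_C(a+b-c-d)\}$ minus the origin, and both density ($\dim \Delta \le 4 < g$, which is where $g \ge 5$ enters for you) and non-openness ($\OO_C$ is a non-isolated point of the irreducible, positive-dimensional $\Delta$) fall out of this one description. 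Your route is more self-contained (no appeal to Halphen) and more informative --- it realizes precisely the ``plane minus the axes plus the origin'' picture the paper only sketches, and the paper's pencil is just the slice of your $\Delta$ with $b = d$ and $a = P$ fixed --- at the cost of the bookkeeping you flag: the jump $h^0(K_C \otimes N) = g-1$ for $N \ne \OO_C$ versus $g$ at the origin, the Serre-duality computation $h^1(L(-p-q)) = h^0(N^{-1}(p+q))$ pinning the bad locus to $\Delta$ on the nose, and the fact that $\Delta$ is the closed irreducible image of $C^{(2)} \times C^{(2)}$. All of these check out.
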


\begin{proof}
    We will show that $\pic_C^{2g-2} \cap \va(C)$ is not locally closed. Since $2g-2 \ge d+3$, it follows from \cite[IV.6.1]{Hartshorne77} that a general member of $\pic_C^{2g-2}$ is very ample. Thus to prove the claim, it suffices to show $\pic_C^{2g-2} \cap \va(C)$ is not open. To do this, we write down a specific family of degree $2g-2$ line bundles whose special member is very ample but whose general member is not.

    Let $P$ be a point of $C$. Consider the family of line bundles $L$ on $C \times C$ whose fiber over a point $t \in C$ is $\OO(K_C + P - t)$. At $t = P$, we get $\OO(K_C)$ which is very ample since $C$ is non-hyperelliptic. We will show that for $t \ne P$, $\OO(K_C + P - t)$ is not very ample (in fact it is not even basepoint free) which proves our claim. 

    So assume $t \ne P$. By \cite[IV.3.1]{Hartshorne77} and Riemann-Roch, $\OO(K_C + P - t)$ is basepoint free if and only if $h^0(C, \OO(t - P + Q)) = h^0(C, \OO(t - P))$ for all $Q \in C$. For $Q = P$, we have $h^0(C, \OO(t - P + Q)) = h^0(C, \OO(t)) = 1$ while $h^0(C , \OO(t - P)) = 0$, thus it is not basepoint free.
\end{proof}

Using the same idea, we can give a very explicit family on line bundles whose very ample locus is not locally closed.

\begin{example}
    Let $C$ be a smooth curve of gonality at least 6 and let $P$ be a point of $C$. Consider the family of line bundles $L$ on $C$ over $C \times C$ whose fiber at $(x,y)$ is

    $$ L_{(x,y)} = \OO(K_C + 3P - 2x - y) $$

    We leave it as an exercise to check that $L_{(x,y)}$ is very ample if and only if $p \notin \{x,y\}$ or $(x,y) = (p,p)$. In other words, if both $x$ and $y$ are both not $p$, than $L_{(x,y)}$ is very ample and the only other very ample member of the family is at $(p,p)$. In this case, the locus $\va(\pi_{C \times C}, L)$ is obtained by removing the curves $x = p$ and $y = p$ but putting the point $(p,p)$ back in, which is clearly not locally closed.
\end{example}

\subsection{Very Ampleness in Families} \label{main}

As we showed in the previous section, very ampleness is not an open property even in proper flat families. In this section, we prove that very ampleness is an open property if we impose additional restraints on the family. In particular, we prove that very ampleness is open if the dimension of the space of global sections is constant on the fibers or if the first cohomology groups of the line bundles in the family vanish. This will be enough for constructing our moduli space in Theorem \ref{va-moduli}.

The proof we give is motivated by the proof of Proposition B.2.10 in the the unpublished notes on Stacks and Moduli by Jarod Alper which can be found at \newline \href{https://sites.math.washington.edu/~jarod/}{https://sites.math.washington.edu/$\sim$jarod/}.

\begin{lemma} \label{bf}
    Let $f: X \rightarrow Y$ be a proper flat morphism of locally noetherian schemes and let $L$ be a line bundle on $X$ such that $h^0(X_y,L_y)$ is constant for all $y \in Y$. Then $\bpf(f,L)$ is open.
\end{lemma}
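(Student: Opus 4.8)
The plan is to realize $\bpf(f,L)$ as the complement of the image of the support of a single coherent sheaf on $X$ — the cokernel of the evaluation map $f^*f_*L \to L$ — and then to invoke properness to see that this complement is open. The one nontrivial input is that the hypothesis that $h^0(X_y,L_y)$ be constant forces $f_*L$ to be locally free and to commute with base change, so that the fibers of the evaluation map genuinely detect basepoint-freeness rather than merely the span of the liftable sections.

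First I would reduce to the case where $Y$ is reduced. Since openness is a topological condition and $|Y_{\mathrm{red}}| = |Y|$, and since replacing $Y$ by $Y_{\mathrm{red}}$ and $X$ by $X \times_Y Y_{\mathrm{red}}$ changes neither the fibers $X_y$ nor the restrictions $L_y$ (the residue field $k(y)$ is unchanged), properness and flatness are preserved and $\bpf(f,L)$ is literally the same subset. Working locally on the now-reduced $Y$, I would use the base-change complex $\mathcal{E}^\kdot$ of \ref{base-change-complex}, for which $f_*L = \ker\delta^0$ and $H^0(X_y,L_y) = \ker\delta^0_y$ by \ref{base-change-complex-eq}. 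The cokernel $\mathcal{W}$ of $\delta^0$ then has fiber dimension $\tr{rk}\,\mathcal{E}^1 - (\tr{rk}\,\mathcal{E}^0 - h^0(X_y,L_y))$, which is constant in $y$; over a reduced noetherian scheme a coherent sheaf of locally constant fiber dimension is locally free, so $\mathcal{W}$ is locally free. By the argument in the proof of Proposition \ref{grauert}, this means $L$ has liftable sections, and cohomology and base change gives that $f_*L$ is locally free and that $\phi_y : f_*L \otimes k(y) \to H^0(X_y,L_y)$ is an isomorphism for every $y$.

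Then I would form the evaluation map $\tr{ev}: f^*f_*L \to L$ adjoint to the identity on $f_*L$, set $Q = \tr{coker}(\tr{ev})$, and let $Z = \tr{Supp}\,Q$, a closed subset of $X$. Writing $i_y: X_y \hookrightarrow X$, right-exactness of $i_y^*$ identifies $i_y^*Q$ with the cokernel of $(f_*L \otimes k(y)) \otimes_{k(y)} \OO_{X_y} \to L_y$; since $\phi_y$ is an isomorphism this is exactly the cokernel of the evaluation map $H^0(X_y,L_y) \otimes \OO_{X_y} \to L_y$. Hence $L_y$ is basepoint free if and only if $i_y^*Q = 0$, i.e. if and only if $X_y \cap Z = \varnothing$, i.e. if and only if $y \notin f(Z)$. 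Thus $\bpf(f,L) = Y \setminus f(Z)$, and since $f$ is proper $f(Z)$ is closed, so $\bpf(f,L)$ is open.

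The crux is the passage from constant $h^0$ to the statement that $\phi_y$ is an isomorphism for all $y$ (equivalently, that $L$ has liftable sections): without it, $i_y^*Q$ would only measure whether the subspace of liftable sections generates $L_y$, not whether $H^0(X_y,L_y)$ does, so the cokernel support would not cut out the basepoint-free locus. This is precisely where the reduction to reduced $Y$ and the resulting local freeness of $\mathcal{W}$ (via Proposition \ref{grauert}) are indispensable; the remaining steps are formal manipulations of the evaluation map and its support.
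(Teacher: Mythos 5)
Your proof is correct and follows essentially the same route as the paper's: reduce to reduced $Y$, use constancy of $h^0$ to obtain Grauert's theorem (which you re-derive from the base-change complex rather than citing), and then express the non-basepoint-free locus as the proper image of a closed subset of $X$. Your closed set $Z = \tr{Supp}\,\tr{coker}(f^*f_*L \to L)$ is exactly the paper's common zero locus of local generators of $f_*L$, just phrased intrinsically.
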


\begin{proof}
    We can start by base changing to make $Y$ reduced since that will not change any of the fibers. Then, by Grauert's Theorem, $f_*L$ is locally free and the natural map
    $$ f_*L \otimes k(y) \rightarrow H^0(X_y,L_y) $$
    is an isomorphism for all $y \in Y$. Let $y_0 \in Y$ be such that $L_{y_0}$ is basepoint free. We can shrink $Y$ so that $f_* L$ is free with generators, $s_1,\dots,s_n \in H^0(X,L)$. These sections restrict to a basis of $H^0(X_y,L_y)$ for each $y \in Y$. Let $Z$ be the intersection of the zero sets of the $s_i$. Then $f(Z)$ is closed and its complement is exactly the locus of $y \in Y$ such that $L_y$ is basepoint free.
\end{proof}

\begin{theorem} \label{const}
    Let $f: X \rightarrow Y$ be a proper flat morphism of locally noetherian schemes and let $L$ be a line bundle on $X$ such that $h^0(X_y,L_y)$ is constant for all $y \in Y$. Then $\va(f,L)$ is open. In particular if $L$ has liftable sections then $\va(f,L)$ is open.
\end{theorem}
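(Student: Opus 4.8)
The plan is to reduce very ampleness of $L_y$ to the two failures it must avoid — failure to be basepoint free, and failure of the associated morphism to be a closed immersion on the fiber — and to show each corresponding locus is open. Since a very ample $L_y$ is in particular basepoint free, we have $\va(f,L) \subseteq \bpf(f,L)$, and Lemma \ref{bf} already gives that $U := \bpf(f,L)$ is open. After replacing $Y$ by $Y_{\mathrm{red}}$ (which changes neither the fibers $X_y$, $L_y$ nor the underlying set where openness is tested) and restricting to $U$, Grauert's theorem makes $f_*L$ locally free with $f_*L \otimes k(y) \xrightarrow{\sim} H^0(X_y,L_y)$, so the evaluation map $f^*f_*L \to L$ is surjective on $X_U$ by Nakayama and yields a morphism $\Phi: X_U \to \mathbb{P} := \mathbb{P}(f_*L)$ over $U$ whose fiber over $y$ is exactly $\phi_{L_y}: X_y \to \mathbb{P}(H^0(X_y,L_y))$, as in Section \ref{deforming-embeddings}. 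Thus $L_y$ is very ample iff $\Phi_y$ is a closed immersion, and it remains to show this fiberwise condition is open on $U$.

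First I would isolate the fiberwise unramified locus. Relative differentials commute with the base change $\mathbb{P}_y \to \mathbb{P}$, so $\Omega_{X_y/\mathbb{P}_y}$ is the restriction of the coherent sheaf $\Omega_{X_U/\mathbb{P}}$ to $X_y$; hence $\Phi_y$ is unramified iff $X_y$ misses the closed set $R := \mathrm{Supp}\,\Omega_{X_U/\mathbb{P}}$. As $f$ is proper, $f(R)$ is closed and $V_1 := U \setminus f(R)$ is the open locus where $\Phi_y$ is unramified; moreover over $V_1$ the morphism $\Phi$ itself is unramified.

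The main step is to cut out, inside $V_1$, the locus where $\Phi_y$ is a monomorphism, since a proper monomorphism is a closed immersion and $\Phi_y$ is automatically proper (it factors $X_{V_1}\to \mathbb{P}_{V_1}\to V_1$ with the first map proper over the separated $\mathbb{P}_{V_1}$). Over $V_1$ the morphism $\Phi$ is unramified and separated, so its diagonal $\Delta_\Phi: X_{V_1} \to W := X_{V_1} \times_{\mathbb{P}} X_{V_1}$ is simultaneously an open immersion (unramifiedness) and a closed immersion (separatedness), exhibiting $W = \Delta_\Phi(X_{V_1}) \sqcup W'$ with $W'$ open and closed, hence a closed subscheme proper over $V_1$ (as $W$ is closed in the proper $X_{V_1}\times_{V_1} X_{V_1}$). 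Writing $q: W' \to V_1$ for the structure map, $\Delta_{\Phi_y}$ is an isomorphism — equivalently $\Phi_y$ is a monomorphism — exactly when $W'_y = \varnothing$, i.e. when $y \notin q(W')$. Since $q$ is proper, $q(W')$ is closed, and I would conclude
\[
\va(f,L) = V_1 \setminus q(W'),
\]
which is open. The main obstacle is precisely this monomorphism-on-fibers step: the key point is that unramifiedness upgrades the always-closed diagonal to an open-and-closed immersion, so that its complement $W'$ is a proper family over $V_1$ whose empty-fiber locus is open by the properness of $q$.

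Finally, for the last assertion, if $L$ has liftable sections then Proposition \ref{grauert} forces $f_*L$ to be locally free with $h^0(X_y,L_y)$ equal to its rank, hence constant on each connected component of $Y$; applying the main statement component by component shows $\va(f,L)$ is open.
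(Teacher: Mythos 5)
Your proof is correct, but it takes a genuinely different route from the paper's. Both arguments begin the same way: reduce to $Y$ reduced, invoke Grauert to get $f_*L$ locally free with fibers $H^0(X_y,L_y)$, and package the fiberwise maps $\phi_{L_y}$ into a single morphism $\phi$ to a projective bundle, so that the problem becomes showing that ``$\phi_y$ is a closed immersion'' is an open condition on $y$. From there the paper argues locally around a point $y_0$ with $L_{y_0}$ very ample: semicontinuity of fiber dimension makes $\phi$ quasi-finite after shrinking, hence finite by properness, and then the coherent cokernel of $\OO_{\PP^n_Y}\rightarrow\phi_*\OO_X$ vanishes along $\PP^n_{y_0}$ and hence on a neighborhood, so $\phi$ itself becomes a closed immersion there and restricts to one on every nearby fiber. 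You instead describe $\va(f,L)$ globally as $\bpf(f,L)\setminus\bigl(f(R)\cup q(W')\bigr)$: first remove the image of the ramification locus $R=\mathrm{Supp}\,\Omega_{X/\PP}$, then use that unramifiedness upgrades the diagonal of $\phi$ to an open-and-closed immersion, so that the complementary summand $W'$ of $X\times_{\PP}X$ is proper over the base and its image is closed, and finish with the fact that a proper monomorphism is a closed immersion. Your version leans on somewhat heavier general machinery (the unramified diagonal criterion, proper monomorphisms are closed immersions) but in exchange identifies the very ample locus explicitly as the complement of two closed images rather than producing a neighborhood of each of its points; both are complete proofs, and your handling of the ``liftable sections'' addendum via Proposition \ref{grauert} and openness of connected components supplies a detail the paper leaves implicit.
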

\begin{proof}
    Let $y_0 \in Y$ be such that $L_{y_0}$ is very ample. As in Lemma \ref{bf}, we pick a basis $s_0,\dots,s_n$ for $f_*(L)$ that restricts to a basis for each $H^0(X_y,L_y)$. We can shrink $Y$ so that $Y$ is affine, noetherian and the $s_i$ generate $L$ by Lemma \ref{bf}. Now we can look at the morphism determined by the $s_i$, $\phi:X \rightarrow \PP^n_Y$. Note that on a fiber of $f$, $\phi$ restricts to the morphism determined by a basis of $H^0(X_y,L_y)$, thus $L_y$ is very ample if and only if $\phi$ restricts to a closed immersion on $X_y$.
    
    By semicontinuity of fiber dimension, we may shrink $Y$ further and assume $\phi$ is quasi-finite which implies that $\phi$ is finite since it is proper. The map on structure sheaves, $\OO_{\PP^n_Y} \rightarrow \phi_*(\OO_X)$ has coherent cokernel, which is zero along $X_{y_0}$. Thus we can shrink $Y$ so that $\OO_{\PP^n_Y} \rightarrow \phi_*(\OO_X)$ is surjective, making $\phi$ a closed immersion. This means that for all $y$ in this neighborhood, the morphism determined by a basis of $H^0(X_y,L_y)$ is a closed immersion, which is exactly what it means to be very ample.
\end{proof}

\begin{corollary} \label{constructible}
    Let $f: X \rightarrow Y$ be a proper morphism of locally noetherian schemes and let $L$ be a line bundle on $X$. Then $\bpf(f,L)$ and $\va(f,L)$ are constructible. 
\end{corollary}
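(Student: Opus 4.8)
The plan is to reduce to the flat, constant-$h^0$ situation already handled in Theorem \ref{const} and Lemma \ref{bf} by combining generic flatness with noetherian induction, since here $f$ is only assumed proper and not flat. I would treat $\va(f,L)$ in detail; the argument for $\bpf(f,L)$ is word-for-word identical with Lemma \ref{bf} in place of Theorem \ref{const}.

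First I would note that constructibility depends only on the underlying topological space of $Y$, so I may replace $Y$ by $Y_{\tr{red}}$ and assume $Y$ reduced. I would then set up a noetherian induction on closed subsets of $Y$: assuming the statement holds for the induced family $f_Z\colon X_Z \to Z$ and $L_Z$ over every proper closed subset $Z \subsetneq Y$, I want to deduce it for $Y$ itself. The compatibility making this work is that very ampleness is a fiberwise condition and the fibers of $f_Z$ over points of $Z$ coincide with those of $f$, so $\va(f_Z,L_Z) = \va(f,L) \cap Z$ as subsets (likewise for $\bpf$). If $Y$ is reducible, each irreducible component is a proper closed subset, and $\va(f,L)$ is the finite union of the $\va(f_{Y_i},L_{Y_i})$, each constructible by the inductive hypothesis; hence I may assume $Y$ is integral.

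With $Y$ integral, I would apply generic flatness to the coherent sheaf $L$ along the finite type morphism $f$ to obtain a dense open $U \subseteq Y$ over which $L$ is flat. Shrinking $U$ further, I would invoke upper semicontinuity of $y \mapsto h^0(X_y,L_y)$ (valid once $L$ is flat over $U$) together with irreducibility of $U$ to arrange that $h^0(X_y,L_y)$ is constant on $U$, equal to its generic minimum. On $U$ the hypotheses of Theorem \ref{const} are then satisfied, so $\va(f|_U,L|_U) = \va(f,L) \cap U$ is open in $U$, and since $U$ is open in $Y$ this set is open, in particular locally closed, in $Y$.

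Finally I would write $\va(f,L) = (\va(f,L) \cap U) \cup (\va(f,L) \cap Z)$ with $Z = Y \setminus U$. As $U$ is dense and $Y$ integral, $Z$ is a proper closed subset, and $\va(f,L) \cap Z = \va(f_Z,L_Z)$ is constructible in $Z$ by the inductive hypothesis, hence constructible in $Y$; the first piece is locally closed in $Y$, so the union is constructible and the induction closes. The only genuinely delicate point is the passage from the merely proper $f$ to the flat, constant-$h^0$ regime where Theorem \ref{const} applies, and this is precisely what generic flatness plus noetherian induction deliver; the remaining steps are bookkeeping. The same scheme, with Lemma \ref{bf} substituted for Theorem \ref{const}, yields constructibility of $\bpf(f,L)$.
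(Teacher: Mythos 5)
Your proposal is correct and is essentially the paper's own argument: the paper's one-line proof (``we may assume $Y$ reduced, $f$ flat by generic flatness, $h^0$ constant by semicontinuity, then apply Lemma \ref{bf} and Theorem \ref{const}'') is precisely the noetherian-induction/stratification scheme you spell out in detail. Your version just makes explicit the reduction to $Y$ integral and the handling of the complementary closed set, which the paper leaves implicit.
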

\begin{proof}
    Like in the previous proofs, we may assume $Y$ is reduced. We may assume $f$ is flat by generic flatness. We may assume $h^0(X_y,L_y)$ is constant by semicontinuity. Now the corollary follows from \ref{bf} and \ref{const}.
\end{proof}

The proof of \ref{const} can be easily modified to prove the same result in the case where $h^1(X_y,L_y) = 0$ but we don't assume $h^0(X_y,L_y)$ is constant. We just replace Grauert's Theorem with cohomology and base change. We provide a different proof, which came from a conversation with Nathan Chen. This proof highlights how the first cohomology group is acting as an obstruction. Furthermore, this proof easily generalizes to the $k$-very ample case so we do that as well.

A line bundle $\LL$ on a projective scheme $X$ is called $k$-very ample if for every 0-dimensional subscheme $\xi$ such that $h^0(\xi,\OO_\xi) = k+1$, the restriction map $H^0(X,\LL) \rightarrow H^0(\xi,\LL_\xi)$ is surjective. Note that 0-very ample is equivalent to basepoint free and 1-very ample is equivalent to very ample. Let $Hilb^{k+1}(X)$ be the Hilbert scheme of 0-dimensional subschemes of $X$ of length $k+1$.

\begin{lemma} \label{k-very-ample}
    Let $X$ be a projective scheme and $\LL$ a line bundle on $X$. Assume that $H^1(X,\LL) = 0$. Then $\LL$ is $k$-very ample if and only if $H^1(X,\LL \otimes \mathcal{I}_\xi) = 0$ for every $\xi \in Hilb^{k+1}(X)$.
\end{lemma}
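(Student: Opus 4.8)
The plan is to reduce the $k$-very ampleness condition to the stated cohomological vanishing by means of the structure sequence of a length-$(k+1)$ subscheme and the hypothesis $H^1(X,\LL) = 0$. First I would fix a $0$-dimensional subscheme $\xi \subseteq X$ with $h^0(\xi, \OO_\xi) = k+1$, that is, an arbitrary point of $Hilb^{k+1}(X)$, and write down the ideal sheaf sequence
$$ 0 \to \mathcal{I}_\xi \to \OO_X \to \OO_\xi \to 0. $$
Since $\LL$ is a line bundle, hence flat, tensoring by $\LL$ preserves exactness and produces
$$ 0 \to \LL \otimes \mathcal{I}_\xi \to \LL \to \LL_\xi \to 0, $$
where $\LL_\xi = \LL \otimes \OO_\xi$ is the restriction of $\LL$ to $\xi$, viewed as a sheaf on $X$ supported on $\xi$.

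Next I would pass to the long exact sequence in cohomology, the relevant portion of which reads
$$ H^0(X, \LL) \to H^0(\xi, \LL_\xi) \to H^1(X, \LL \otimes \mathcal{I}_\xi) \to H^1(X, \LL). $$
The hypothesis $H^1(X, \LL) = 0$ annihilates the last term, so the cokernel of the restriction map $H^0(X, \LL) \to H^0(\xi, \LL_\xi)$ is identified with $H^1(X, \LL \otimes \mathcal{I}_\xi)$. In particular, the restriction map is surjective if and only if $H^1(X, \LL \otimes \mathcal{I}_\xi) = 0$. Here I use that $\xi$ is $0$-dimensional, so that pushforward along the closed immersion $\xi \hookrightarrow X$ gives $H^0(X, \LL_\xi) = H^0(\xi, \LL|_\xi)$, the global sections of the restriction appearing in the definition.

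Finally I would match this equivalence with the definitions. By definition, $\LL$ is $k$-very ample precisely when the restriction map is surjective for \emph{every} $0$-dimensional $\xi$ with $h^0(\xi, \OO_\xi) = k+1$, which is exactly the family of subschemes parameterized by $Hilb^{k+1}(X)$. Combining this with the pointwise equivalence of the previous step yields that $\LL$ is $k$-very ample if and only if $H^1(X, \LL \otimes \mathcal{I}_\xi) = 0$ for all $\xi \in Hilb^{k+1}(X)$, as claimed.

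I do not expect a genuine obstacle here: the argument is a direct application of the long exact sequence together with the vanishing hypothesis. The only points that warrant a little care are the bookkeeping that the quantifier in the definition of $k$-very ampleness ranges over exactly the $0$-dimensional subschemes of length $k+1$, and the observation that the identification of the cokernel of the restriction map with $H^1(X, \LL \otimes \mathcal{I}_\xi)$ relies essentially on the global vanishing $H^1(X, \LL) = 0$ to suppress the contribution of $H^1(X,\LL)$.
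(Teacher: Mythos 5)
Your proof is correct and follows exactly the paper's argument: tensor the ideal sheaf sequence of $\xi$ by $\LL$, take the long exact sequence in cohomology, and use $H^1(X,\LL)=0$ to identify the cokernel of the restriction map with $H^1(X,\LL\otimes\mathcal{I}_\xi)$. Your version simply spells out a few routine details (flatness of $\LL$, pushforward along the closed immersion) that the paper leaves implicit.
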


\begin{proof}
    Consider the short exact sequence,
    $$ 0 \rightarrow \mathcal{I}_\xi \otimes \LL \rightarrow \LL \rightarrow \LL_\xi \rightarrow 0 $$

    The lemma follows from the long exact sequence we get from taking cohomology,

    \[ H^0(X,\LL) \rightarrow H^0(\xi,\LL_\xi) \rightarrow H^1(X,\LL \otimes \mathcal{I}_\xi) \rightarrow 0 \qedhere \]
    
\end{proof}

\begin{proposition}
    Let $f: X \rightarrow Y$ be a proper flat morphism of locally noetherian schemes and let $L$ be a line bundle on $X$. Suppose that $L_{y_0}$ is $k$-very ample and $H^1(X_{y_0},L_{y_0}) = 0$ for some $y_0 \in Y$. Then there is an open neighborhood $U$ of $y_0$ such that $L_y$ is $k$-very ample for all $y \in U$.
\end{proposition}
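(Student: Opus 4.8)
The plan is to upgrade the cohomological criterion of Lemma \ref{k-very-ample} to a statement in families, using the relative Hilbert scheme of points as the parameter space over which the obstruction $H^1$ lives. First I would use semicontinuity of $R^1 f_* L$ on $Y$: since $h^1(X_{y_0}, L_{y_0}) = 0$, the locus $\{y \in Y : h^1(X_y, L_y) = 0\}$ is open and contains $y_0$, so after shrinking $Y$ I may assume $H^1(X_y, L_y) = 0$ for every $y$ (this preserves properness and flatness of $f$). This lets me invoke Lemma \ref{k-very-ample} on every fiber simultaneously, reducing the problem to showing that $H^1(X_y, L_y \otimes \mathcal{I}_\xi) = 0$ for all length-$(k+1)$ subschemes $\xi \subseteq X_y$, for $y$ near $y_0$.

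Next I would introduce the relative Hilbert scheme $H := \mathrm{Hilb}^{k+1}(X/Y)$ with structure map $\pi : H \to Y$, together with its universal subscheme $\Xi \subseteq X \times_Y H$, which is flat and finite of length $k+1$ over $H$. Writing $p : X \times_Y H \to H$ and $q : X \times_Y H \to X$ for the projections, I would consider the sheaf
\[ \mathcal{F} := \mathcal{I}_\Xi \otimes q^* L . \]
From the sequence $0 \to \mathcal{I}_\Xi \to \OO_{X \times_Y H} \to \OO_\Xi \to 0$, in which $\OO_\Xi$ and $\OO_{X \times_Y H}$ are both flat over $H$, a standard $\mathrm{Tor}$ argument gives that $\mathcal{I}_\Xi$, and hence $\mathcal{F}$, is flat over $H$; moreover, because $\Xi$ is flat over $H$, the restriction of $\mathcal{F}$ to the fiber of $p$ over a point $h \in H$ lying over $y$ and corresponding to $\xi \subseteq X_y$ is exactly $L_y \otimes \mathcal{I}_\xi$. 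Applying the semicontinuity theorem to the proper morphism $p$ (a base change of $f$) and the $H$-flat coherent sheaf $\mathcal{F}$, the locus
\[ Z := \{ h \in H : h^1(X_y, L_y \otimes \mathcal{I}_\xi) \geq 1 \} \]
is closed in $H$.

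Finally I would push $Z$ down to $Y$. Since $\pi$ is proper, $\pi(Z)$ is closed in $Y$. The fiber $\pi^{-1}(y_0) = \mathrm{Hilb}^{k+1}(X_{y_0})$ is disjoint from $Z$: indeed, $L_{y_0}$ is $k$-very ample and $H^1(X_{y_0}, L_{y_0}) = 0$, so Lemma \ref{k-very-ample} gives $H^1(X_{y_0}, L_{y_0} \otimes \mathcal{I}_\xi) = 0$ for every $\xi$ of length $k+1$. Hence $y_0 \notin \pi(Z)$, and I would take $U := Y \setminus \pi(Z)$. For $y \in U$ the fiber $\pi^{-1}(y)$ misses $Z$, so $H^1(X_y, L_y \otimes \mathcal{I}_\xi) = 0$ for all $\xi$; combined with $H^1(X_y, L_y) = 0$, Lemma \ref{k-very-ample} shows $L_y$ is $k$-very ample, as desired.

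The main obstacle I anticipate is the bookkeeping at the level of the relative Hilbert scheme: verifying that $\mathcal{F}$ is genuinely $H$-flat and that its $p$-fibers are correctly identified with the $L_y \otimes \mathcal{I}_\xi$, so that semicontinuity on $H$ transfers \emph{precisely} to the fiberwise vanishing I want, and ensuring that $\pi : H \to Y$ is proper so that the bad locus $Z$ descends to a closed subset of $Y$. This properness is exactly where the hypothesis on $f$ enters; in the projective setting of the intended applications it is automatic, and for a general proper $f$ one works with the relative Hilbert algebraic space of points, for which both semicontinuity and the closedness of images under proper maps remain valid.
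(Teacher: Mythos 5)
Your proposal is correct and follows essentially the same route as the paper: shrink $Y$ by semicontinuity so that $h^1(X_y,L_y)=0$ everywhere, pass to the relative Hilbert scheme of length-$(k+1)$ subschemes with its universal ideal sheaf twisted by $L$, apply semicontinuity over the Hilbert scheme, and push the closed bad locus down to $Y$ by properness, concluding via Lemma \ref{k-very-ample}. The only difference is that you spell out the flatness of $\mathcal{I}_\Xi \otimes q^*L$ and the properness of $\mathrm{Hilb}^{k+1}(X/Y) \rightarrow Y$, which the paper leaves implicit.
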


\begin{proof}
    By semicontinuity, we may assume that $h^1(X_y,L_y) = 0$ for all $y \in Y$.
    Let $Hilb^{k+1}(X/Y)$ be the relative Hilbert scheme parameterizing 0-dimensional, length $k+1$ subschemes contained in the fibers of $f$. So the points of $Hilb^{k+1}(X/Y)$ are of the form $(y,\xi)$ where $y \in Y$ and $\xi \in Hilb^{k+1}(X_y)$. Then we have a fiber square,

    \begin{center}
        \begin{tikzcd}
X \underset{Y}{\times} Hilb^{k+1}(X/Y) \arrow[r] \arrow[d,"f'"] & X \arrow[d,"f"] \\
Hilb^{k+1}(X/Y) \arrow[r]                                  & Y          
\end{tikzcd}
    \end{center}

Let $Z = X \times_Y Hilb^{k+1}(X/Y)$. Let $\mathcal{U}$ be the universal subscheme in $Z$ and $\mathcal{I}_{\mathcal{U}}$ its ideal sheaf. The sheaf $L_Z \otimes \mathcal{I}_{\mathcal{U}}$ is flat over $Hilb^{k+1}(X/Y)$ so we can use semicontinuity with respect to $f'$ to see that the points $(y,\xi) \in Hilb^{k+1}(X/Y)$ such that $H^1(X_y, L_y \otimes \mathcal{I}_\xi) \neq 0$ is closed. The image of this closed set in $Y$ is closed and its complement is the open set we desired by Lemma \ref{k-very-ample}. 
\end{proof}

\subsection{The Moduli Functor}

Let $X$ be a projective variety. We define the moduli functor for very ample line bundles on $X$ as the functor $\va_X : Sch \rightarrow Set$ defined by
$$ \va_X(T) = \left\{ L \in \Pic_X(T) : \forall t \in T, L_t \tr{ is very ample and } L \tr{ has liftable sections over } T \right\} $$

Recall that $\Pic_X(T) = \Pic(X \times T)/ \pi_T^* \Pic(T)$, so $L \in \Pic_X(T)$ is a family of line bundles on $X$. Note that $\va_X$ is a subfunctor of $\pic_X$ so there is a natural map of functors $\va_X \rightarrow \pic_X$.

In Section \ref{deforming-embeddings}, we showed that a family has liftable sections over $T$ if and only if the maps $\phi_{L_t}$ deform in a family over $T$. When considering a family of very ample line bundles, it makes sense to require it to have liftable sections since being very ample is a property dependent on the map $\phi_{L_t}$.
Note that closed points of $\va_X$ are the (isomorphism classes of) very ample line bundles on $X$.

\subsection{Constructing the Moduli Space}

Theorem \ref{const} allows us to use the Brill-Noether loci of $\pic_X$ to prove that $\va_X$ is representable. 

\begin{theorem} \label{va-moduli}
        Let $X$ be a projective variety. The functor $\va_X$ is representable by a scheme. Moreover, the natural map
$$ \va_X \rightarrow \pic_X $$

is a locally closed embedding on the connected components of $\va_X$.

\end{theorem}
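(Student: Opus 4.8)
The plan is to build $\va_X$ directly on top of the stratification furnished by Corollary \ref{Brill-Picard}, carving out the very ample locus stratum by stratum via the openness result of Theorem \ref{const}. Write $V^k = W_X^k \setminus W_X^{k+1}$, so that by Corollary \ref{Brill-Picard} the disjoint union $\dot{\bigcup}_k V^k$ represents the functor of families with liftable sections, and each $V^k$ is locally closed in $\pic_X$ (it is open in the closed subscheme $W_X^k$, as $W_X^{k+1} \subseteq W_X^k$). By construction each $V^k$ is exactly the locus where $h^0(X,\LL) = k+1$, and on $V^k$ the restricted universal bundle $\mathcal{U}|_{X \times V^k}$ has liftable sections with $h^0$ constant, by Proposition \ref{grauert}.

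First I would produce, on each stratum, the open very ample locus. Applying Theorem \ref{const} to the family $\mathcal{U}|_{X \times V^k}$ over $V^k$, whose fibers have constant $h^0 = k+1$, shows that $U^k := \va(\pi, \mathcal{U}|_{X \times V^k})$ is open in $V^k$; since $V^k$ is locally closed in $\pic_X$, the subscheme $U^k$ is locally closed in $\pic_X$. Because very ampleness of a fiber and the liftable-sections condition are both invariant under twisting $L$ by the pullback of a line bundle on the base, the subsets $U^k$ and the functor $\va_X$ do not depend on the non-unique choice of universal bundle, so all of this is well posed. I then define $\va_X := \dot{\bigcup}_k U^k$ as a scheme.

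Next I would verify representability by comparing functors of points. Given $L \in \va_X(T)$, the liftable-sections hypothesis together with Corollary \ref{Brill-Picard} yields a unique morphism $g : T \to \dot{\bigcup}_k V^k$ with $g^* \mathcal{U} \cong L$ in $\Pic_X(T)$, decomposing $T$ into clopen pieces $T_k$ with $g_k : T_k \to V^k$. For each closed point $t \in T_k$ the image $g_k(t)$ corresponds to $L_t$, which is very ample by assumption, so $g_k(t)$ lies in $U^k$ set-theoretically; as $U^k \subseteq V^k$ is an open subscheme, $g_k$ factors through $U^k$, giving a morphism $T \to \va_X$. Conversely, a morphism $T \to \va_X = \dot{\bigcup}_k U^k$ pulls $\mathcal{U}$ back to a family that has liftable sections (it does over each $V^k \supseteq U^k$) and whose fibers are very ample (they are exactly the points of the $U^k$). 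These constructions are mutually inverse, so $\va_X$ represents the functor. When $X$ has no rational point and $\mathcal{U}$ fails to exist globally, the argument runs identically after the descent remark following Corollary \ref{BN-moduli-of-sheaves}.

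Finally, for the claim about $\va_X \to \pic_X$: because $\va_X = \dot{\bigcup}_k U^k$ is a disjoint union, every connected component of $\va_X$ lies in a single $U^k$, and $U^k \hookrightarrow \pic_X$ is a locally closed immersion by the first step, so the restriction of $\va_X \to \pic_X$ to each connected component is a locally closed immersion. The main obstacle is not in this essentially formal assembly but in correctly matching the two universal properties, namely that factoring through $\dot{\bigcup}_k U^k$ is equivalent to ``liftable sections together with very ample fibers.'' This rests on Theorem \ref{const} (very ampleness is open precisely when $h^0$ is constant, which the stratification guarantees) and on the universal property of the stratification from Corollary \ref{Brill-Picard}, combined with the twist-invariance that renders the construction independent of the choice of universal bundle.
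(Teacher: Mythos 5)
Your proposal is correct and follows essentially the same route as the paper: both use Corollary \ref{Brill-Picard} to realize the liftable-sections functor on the stratification $\dot{\bigcup}_k W^k(\mathcal{U}) \setminus W^{k+1}(\mathcal{U})$ and then invoke Theorem \ref{const} (constant $h^0$ on each stratum) to cut out the very ample locus as an open subscheme, with the locally-closed-immersion claim following componentwise. Your write-up just makes explicit the functor-of-points verification and the independence from the choice of universal bundle, which the paper leaves implicit.
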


\begin{proof}
    Let $\mathcal{U}$ be a universal line bundle on $\pic_X \times X$. By Corollary \ref{Brill-Picard}, \newline $\dot{\bigcup}_{k \in \mathbb{Z}} W^k(\mathcal{U}) \setminus W^{k+1}(\mathcal{U}) \tr{ represents the functor } $
 $$ T \rightarrow  \left\{ L \in \Pic_X(T) : L \tr{ has liftable sections over } T \right\} $$

 Now it follows from Theorem \ref{const} that $\va_X$ is represented by an open subscheme of $\dot{\bigcup}_{k \in \mathbb{Z}} W^k(\mathcal{U}) \setminus W^{k+1}(\mathcal{U})$. Since the natural map 
 $$ \dot{\bigcup}_{k \in \mathbb{Z}} W^k(\mathcal{U}) \setminus W^{k+1}(\mathcal{U}) \rightarrow \pic_X $$

 is a locally closed immersion on connected components, the theorem follows.
\end{proof}

\begin{corollary}
    Let $C$ be a general smooth curve of genus at least 2. Then $\va_C$ is smooth.
\end{corollary}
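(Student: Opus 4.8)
The plan is to reduce the smoothness of $\va_C$ to the classical fact that, for a general curve, the Brill-Noether strata are smooth, which is a consequence of the Gieseker-Petri theorem. First I would use Theorem \ref{va-moduli}, which exhibits $\va_C$ as an open subscheme of $\dot{\bigcup}_{k} W^k(\mathcal{U}) \setminus W^{k+1}(\mathcal{U})$, where $\mathcal{U}$ is a universal line bundle on $\pic_C \times C$. Since an open subscheme of a smooth scheme is smooth, it suffices to prove that each locally closed subscheme $W^k(\mathcal{U}) \setminus W^{k+1}(\mathcal{U}) \subseteq \pic_C$ is smooth.

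Next I would identify these strata with the classical Brill-Noether strata. By the Remark following Corollary \ref{Brill-Picard} we have $W^k_C = \bigcup_d W^k_d$, and Corollary \ref{classical} shows that, on the degree-$d$ component $\pic_C^d$, the scheme $W^k(\mathcal{U})$ coincides with the classical Brill-Noether locus $W^k_d$ carrying its usual determinantal scheme structure (the hypothesis $H^i=0$ for $i\ge 2$ needed there is automatic on a curve). Hence the restriction of $W^k(\mathcal{U}) \setminus W^{k+1}(\mathcal{U})$ to $\pic_C^d$ is the open subscheme $W^k_d \setminus W^{k+1}_d$, the locus of degree-$d$ line bundles $L$ with $h^0(C,L)=k+1$ exactly.

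Then I would invoke Brill-Noether theory on a general curve. At a point $L \in W^k_d \setminus W^{k+1}_d$ the Zariski tangent space to $W^k_d$ inside $T_L \pic_C = H^1(C,\OO_C)$ is the annihilator of the image of the Petri map
$$ \mu_0: H^0(C,L) \otimes H^0(C, K_C \otimes L^{-1}) \rightarrow H^0(C,K_C), $$
so that $\dim T_L W^k_d = g - \mathrm{rank}(\mu_0)$. The Gieseker-Petri theorem says that for a general curve $\mu_0$ is injective for every line bundle, whence (using Riemann-Roch, $h^0(K_C \otimes L^{-1}) = g-d+k$) one gets $\mathrm{rank}(\mu_0) = (k+1)(g-d+k)$ and $\dim T_L W^k_d = g - (k+1)(g-d+k) = \rho$. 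On the other hand, the determinantal lower bound on the dimension recalled after Proposition \ref{sing-determinantal-subschemes} gives $\dim_L W^k_d \geq \rho$. Combining these with $\dim T_L W^k_d \geq \dim_L W^k_d$ forces $\dim T_L W^k_d = \dim_L W^k_d = \rho$, so $W^k_d$ is smooth at $L$. Thus each stratum $W^k_d \setminus W^{k+1}_d$ is smooth, their disjoint union over $k$ and $d$ is smooth, and the open subscheme $\va_C$ is smooth.

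The main obstacle is the comparison of scheme structures: one must be certain that the Brill-Noether scheme structure $W^k(\mathcal{U})$ produced by the higher Fitting ideal construction agrees, as a scheme and not merely as a set, with the classical determinantal structure to which Gieseker-Petri applies. This is precisely the content of Corollary \ref{classical}, which matches $\BN^k$ with the classical Fitting ideal $\fitt_{k-\chi}(R^1\pi_*\mathcal{U})$; once this identification is in hand, the only genuinely deep input is the Gieseker-Petri theorem, and the remaining dimension-count and openness steps are formal.
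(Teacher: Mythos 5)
Your proposal is correct and follows the same route as the paper: exhibit $\va_C$ as an open subscheme of $\dot{\bigcup}_k W^k(\mathcal{U})\setminus W^{k+1}(\mathcal{U})$, identify the strata with the classical $W^k_d\setminus W^{k+1}_d$, and conclude from their smoothness for a general curve. The paper simply cites \cite[XXI]{ACGH2} for that last fact, whereas you unpack it via Gieseker--Petri and the determinantal dimension bound; that expansion is accurate but not a different argument.
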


\begin{proof}
    This follows from the fact that for a general smooth curve of genus at least 2, $W_d^k \setminus W_d^{k+1}$ is smooth for all $d$ and $k$, see \cite[XXI]{ACGH2}.
\end{proof}

\section{Moduli of Basepoint Free Line Bundles} \label{moduli of BF-chapter}
    We can use the exact same method to construct a moduli space for basepoint free line bundles. We define our moduli functor
    $$ \bpf_X(T) = \left\{ L \in \Pic_X(T) : \forall t \in T, L_t \tr{ is basepoint free and } L \tr{ has liftable sections over} T \right\} $$

    Lemma \ref{bf} proves that $\bpf_X$ is an open subscheme of $\dot{\bigcup}_{k \in \mathbb{Z}} W^k(\mathcal{U}) \setminus W^{k+1}(\mathcal{U})$, therefore we have,

    \begin{theorem}
        Let $X$ be a projective variety. The functor $\bpf_X$ is representable by a scheme. Moreover, the natural map
$$ \bpf_X \rightarrow \pic_X $$

is a locally closed embedding on the connected components of $\va_X$.
    \end{theorem}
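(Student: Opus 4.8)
The plan is to follow the proof of Theorem \ref{va-moduli} essentially verbatim, substituting the openness of the basepoint free locus (Lemma \ref{bf}) for the openness of the very ample locus (Theorem \ref{const}). First I would fix a universal line bundle $\mathcal{U}$ on $\pic_X \times X$ and let $\pi : \pic_X \times X \rightarrow \pic_X$ be the projection. By Corollary \ref{Brill-Picard}, the disjoint union $\dot{\bigcup}_{k \in \mathbb{Z}} W^k(\mathcal{U}) \setminus W^{k+1}(\mathcal{U})$ represents the functor sending $T$ to the set of $L \in \Pic_X(T)$ that have liftable sections over $T$, and the natural map from this union to $\pic_X$ is a locally closed embedding on each connected component. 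This takes care of the liftable-sections half of the definition of $\bpf_X$.

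Next I would isolate the basepoint free condition as an open condition on this union. On each stratum $W^k(\mathcal{U}) \setminus W^{k+1}(\mathcal{U})$, Proposition \ref{grauert} tells us that $\pi_* \mathcal{U}$ is locally free of rank $k+1$; equivalently, $h^0$ is constant on the fibers over this stratum. This is exactly the hypothesis of Lemma \ref{bf}, so the basepoint free locus of $\mathcal{U}$ restricted to each stratum is open. Intersecting the liftable-sections condition (which cuts out the disjoint union) with the fiberwise basepoint free condition (which is now known to be open) shows that $\bpf_X$ is represented by an open subscheme of $\dot{\bigcup}_{k \in \mathbb{Z}} W^k(\mathcal{U}) \setminus W^{k+1}(\mathcal{U})$.

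Finally I would compose the open embedding $\bpf_X \hookrightarrow \dot{\bigcup}_{k \in \mathbb{Z}} W^k(\mathcal{U}) \setminus W^{k+1}(\mathcal{U})$ with the locally closed embedding of the latter into $\pic_X$. Since an open immersion followed by a locally closed embedding is again a locally closed embedding, and since each connected component of $\bpf_X$ lands in a single connected component of the union, the map $\bpf_X \rightarrow \pic_X$ is a locally closed embedding on connected components, as claimed.

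I do not expect a serious obstacle here, since all of the heavy lifting is already done: the representability of the liftable-sections functor is Corollary \ref{Brill-Picard}, and the key input that makes the basepoint free condition genuinely \emph{open} rather than merely constructible (Corollary \ref{constructible}) is the fiberwise constancy of $h^0$, which Proposition \ref{grauert} supplies on each stratum. The only point worth stating carefully is that the strata $W^k(\mathcal{U}) \setminus W^{k+1}(\mathcal{U})$ are precisely where $h^0$ is constant, so that Lemma \ref{bf} applies stratum by stratum rather than globally on $\pic_X$.
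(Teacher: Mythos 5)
Your proposal is correct and follows the paper's own argument exactly: the paper likewise constructs $\bpf_X$ as an open subscheme of $\dot{\bigcup}_{k \in \mathbb{Z}} W^k(\mathcal{U}) \setminus W^{k+1}(\mathcal{U})$ by running the proof of Theorem \ref{va-moduli} with Lemma \ref{bf} in place of Theorem \ref{const}. Your extra remark that Proposition \ref{grauert} supplies the fiberwise constancy of $h^0$ on each stratum, which is what makes Lemma \ref{bf} applicable, is exactly the (implicit) justification the paper relies on.
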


\nocite{KM98}
\nocite{Kollar96}
\nocite{Hartshorne77}
\nocite{SingBook}
\nocite{MR1322960}
\nocite{Matsumura80}
\nocite{Matsumura89}
\nocite{Weibel_1994}
\nocite{Iitaka82}
\nocite{ACGH2}
\nocite{PosI}
\nocite{PosII}
\nocite{Mumford70}
\nocite{Huybrechts_Lehn_2010}
\nocite{MR1406314}
\nocite{MR2289519}

\bibliographystyle{alpha} % We choose the "plain" reference style
\bibliography{Ref,sample}

\end{document}